\newcommand{\RR}{\mathbb R}
\newcommand{\Ss}{\mathcal S}
\newcommand{\NN}{\mathbb N}
\newcommand{\TT}{\mathbb T}
\newcommand{\pat}{\partial_t}
\newcommand{\pax}{\partial_x}
\newcommand{\vertiii}[1]{{\left\vert\kern-0.25ex\left\vert\kern-0.25ex\left\vert #1 
    \right\vert\kern-0.25ex\right\vert\kern-0.25ex\right\vert}}
\newcommand{\re}{\text{Re}}
\newcommand{\im}{\text{Im}}
\newcounter{comentcount}
\newcounter{teocount}
\newtheorem{lem}{Lemma}
\newtheorem{prop}{Proposition}
\newtheorem{teo}[teocount]{Theorem}  
\newtheorem{defi}{Definition}
\newtheorem{remark}{Remark}
\title[On the effect of boundaries in two-phase porous flow]{On the effect of boundaries in two-phase porous flow}
\author[R. Granero-Belinch\'{o}n]{Rafael Granero-Belinch\'{o}n}
\email{rgranero@math.ucdavis.edu}
\address{Department of Mathematics, University of California, Davis, CA 95616, USA}
\author[G. Navarro]{Gustavo Navarro}
\email{gnavarro@math.ucdavis.edu}
\address{Department of Mathematics, University of California, Davis, CA 95616, USA}
\author[A. Ortega]{Alejandro Ortega}
\email{alejandro.ortega@uc3m.es}
\address{Departmento de Matem\'aticas, Universidad Carlos III de Madrid, 28911, Legan\'es, Madrid, Spain}
\begin{document}

\begin{abstract}
In this paper we study a model of an interface between two fluids in a porous medium. For this model we prove several local and global well-posedness results and study some of its qualitative properties. We also provide numerics.
\end{abstract}

\maketitle

\textbf{Keywords}: Muskat problem, porous medium, one-dimensional model.

\textbf{MSC (2010)}: 35B50, 35B65, 35Q35.

\textbf{Acknowledgments}: RGB and GN thanks Prof. Garving K. Luli for fruitful discussions. RGB and GN gratefully acknowledge the support by the Department of Mathematics at UC Davis where this research was performed. RGB is partially supported by the grant MTM2011-26696 from the former Ministerio de Ciencia e Innovaci\'on (MICINN, Spain).

\tableofcontents

\section{Introduction}
Free boundary problems for incompressible, inviscid flows and for active scalars are mathematically challenging and physically interesting. Moreover, their applications are really spread, from geothermal reservoirs (see \cite{CF}) to tumor growth (see \cite{F}), passing through weather forecasting (see \cite{majda1996two, constantin1994singular}).

In particular, the evolution of a fluid in a porous medium is important in the Applied Sciences and Engineering (see \cite{bear}) but also in Mathematics (see, for instance, \cite{c-c-g10}). The effect of the medium has important consequences and the usual equations for the conservation of momentum, \emph{i.e.} the Euler or Navier-Stokes equations, must be replaced with an empirical law: Darcy's Law
\begin{equation}
\frac{\mu}{\kappa}v=-\nabla p -g(0,\rho),
\label{eq1} 
\end{equation}
where $\mu$ is the dynamic viscosity of the fluid, $\kappa$ is the permeability of the porous medium, $g$ is the acceleration due to gravity, $v$ is the velocity of the fluid, $\rho$ is the density and $p$ is the pressure (see \cite{bear}). In our favourite units, we can assume $g=\mu=1.$

A very important part of the theory of flow in porous media studies the coexistence of two immiscible fluids with different qualities in the same volume. The case of two immiscible and incompressible fluids is known as the Muskat o Muskat-Leverett problem (see \cite{Muskat} and also \cite{SCH}). In this case the density is given by 
\begin{equation}\label{density}
\rho=\rho^2\textbf{1}_{\{y<f(x,t)\}}+\rho^1\textbf{1}_{\{y>f(x,t)\}},
\end{equation}
where 
\begin{equation}\label{gamma}
\Gamma(t)=\{(x,f(x,t)): \: x\in\RR\},
\end{equation} 
is the interface between both phases. This interface is an unknown in the evolution. If $\rho^2>\rho^1$, the system is in the so-called \emph{stable} (or Rayleigh-Taylor stable) regime.

Given the depth of the porous medium, $l>0$, we define the following dimensionless parameter (see \cite{bona2008asymptotic} and references therein)
\begin{equation}\label{dimensionless}
\mathcal{A}=\frac{\|f_0\|_{L^\infty}}{l}. 
\end{equation}

If the porous medium has infinite depth ($\mathcal{A}=0$), the equation for the interface is
\begin{equation}\label{full}
\pat f=\frac{\rho^2-\rho^1}{2\pi}\text{P.V.}\int_\RR \frac{(\pax f(x)-\pax f(x-\eta))\eta}{\eta^2+(f(x)-f(x-\eta))^2}d\eta,
\end{equation}
where $\text{P.V.}$ denotes principal value. This situation is known as the \emph{deep water regime}. This case has been extensively studied (see \cite{ambrose2004well, castro2012breakdown, ccfgl, ccgs-10, ccgs-13, c-c-g10, c-g07, c-g09, escher2011generalized, e-m10, KK} and references therein).

If the initial data verifies $|f_0|<l$, in the presence of impervious boundaries (see Figure \ref{bandafig}), the system is in the regime $0<\mathcal{A}<1$. This is known as the \emph{confined Muskat problem}. The equation for the interface corresponding to this situation when $l=\pi/2$ is
\begin{multline}
\pat f(x,t) = \frac{\rho^2-\rho^1}{4\pi}\text{P.V.}\int_\RR\bigg{[}\frac{ (\partial_x f (x)-\partial_x f (x-\eta ))\sinh\left(\eta\right)}{\cosh \left(\eta\right)-\cos\left(f(x)-f(x-\eta)\right)}\\
+\frac{(\partial_x f(x)+\partial_x f(x-\eta))\sinh\left(\eta\right)}{\cosh \left(\eta\right)+\cos\left(f(x)+f(x-\eta)\right)}\bigg{]}d\eta.
\label{eq0.1}
\end{multline}
This case has been studied in \cite{BCG, CGO, GG, G}. Notice that the second kernel becomes singular when $f$ reaches the boundaries.

\begin{figure}[h!]
		\begin{center}
		\includegraphics[scale=0.3]{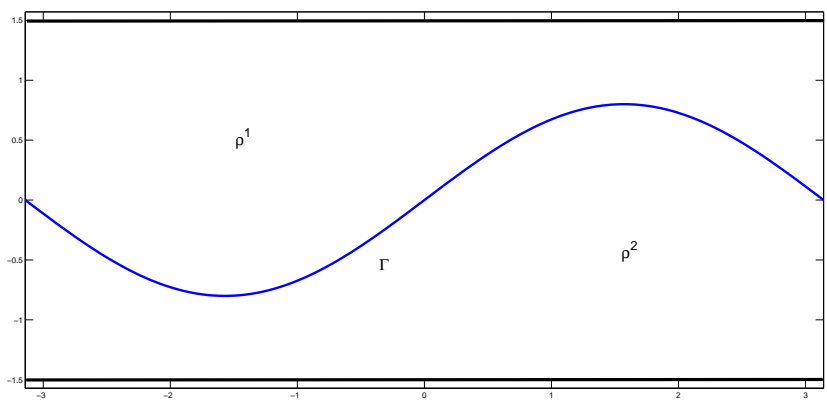} 
		\end{center}
		\caption{Physical situation for an interface $\Gamma$ in the strip $\RR\times (-l,l)$.}
\label{bandafig}
\end{figure}

It has been proved that $\mathcal{A}$ plays an important role on the evolution of $\|f(t)\|_{C^1}$. For instance, if $0<\mathcal{A}<1$, $\|f(t)\|_{L^\infty}$ decays slower than in the deep water regime. Moreover, to ensure that $\|\pax f(t)\|_{L^\infty}<1$ for every time, one needs to impose conditions on the amplitude and the slope of the initial data related to the depth $l$ (see \cite{CGO}). Notice that in the deep water case, the condition is only on $\|\pax f_0\|_{L^\infty}$ (see \cite{c-g09}). Finally, we also mention that, using a computer assisted proof, the authors in \cite{GG} proved that there exists curves, $z=(z_1(\alpha),z_2(\alpha))$, solutions of of the confined Muskat problem ($0<\mathcal{A}<1$) corresponding to the initial data $z_0$, such that $\pax z_1(0,\delta)<0$ for a sufficiently small time $\delta>0$ (\emph{i.e.} the wave \emph{breaks}). The same initial curve $z_0$ when plugged into the infinitely deep Muskat problem ($\mathcal{A}=0$) verifies $\pax z_1(0,\delta)>0$ (\emph{i.e.} the wave becomes a smooth graph).

The last case corresponds to $\mathcal{A}=1$. This case is known as the \emph{large amplitude regime}. In this situation, the initial interface reach the impervious walls at least in one point.

Let's write $\Lambda u=\sqrt{-\pax^2}u$ for the square root of the laplacian. In \cite{c-g-o08}, the authors proposed the problem
\begin{equation}\label{modelodiego}
\partial_t f(x)=-\frac{1}{1+(\partial_x f(x))^2}\Lambda f(x),
\end{equation}
as a model of the dynamics of an interface in the two phase, deep water Muskat problem \eqref{full}. If we define $g = \partial_x f$ and we take the derivative $\partial_x$ to equation \eqref{modelodiego}, we get
\begin{equation}\label{modelodiegoderiv}
g_t = -\Lambda g + \pax\left(\frac{g^2Hg}{1+g^2}\right).
\end{equation}
This latter equation will be helpful because it has divergence form.

In this paper, to study the effect of the boundaries in the large amplitude regime, we propose and study a model of \eqref{eq0.1}. Assume $l=\pi/2$ and $f(\tilde{x})=\pm l$ for some $\tilde{x}$. Then, the second term in \eqref{eq0.1} reduces to
$$
\frac{(\partial_x f(\tilde{x})+\partial_x f(\tilde{x}-\eta))\sinh\left(\eta\right)}{\cosh \left(\eta\right)+\cos\left(f(\tilde{x})+f(\tilde{x}-\eta)\right)}=\frac{\partial_x f(\tilde{x}-\eta)\sinh\left(\eta\right)}{\cosh \left(\eta\right)-\cos\left(f(\tilde{x})-f(\tilde{x}-\eta)\right)}\approx H\pax f(\tilde{x}),
$$
where $H$ denotes the Hilbert transform. Therefore, using \eqref{eq0.1}, $\pat f(\tilde{x})=0$ and the diffusion degenerates. 

To capture this crucial fact when $f(\tilde{x})=\pm l$, we introduce the equation 
\begin{equation}\label{model1}
\partial_t f(x)=\left(-\frac{1}{1+(\partial_x f(x))^2}+\frac{1}{1+l^2-\left(f(x)\right)^2}\right)\Lambda f(x),
\end{equation}
as a model of \eqref{eq0.1} in the large amplitude regime. Let's point out that, if $l=\infty$, formally, we recover \eqref{modelodiego}.

Notice that 
\begin{equation}\label{eqRayleigh}
\frac{1}{1+(\pax f(x))^2}>\frac{1}{1+l^2-\left(f(x)\right)^2},
\end{equation}
guarantees that the model \eqref{model1} is in the so-called stable regime. In other words, if \eqref{eqRayleigh} holds, the model has a nonlocal, non-degenerate diffusion. Notice that the set of functions verifying \eqref{eqRayleigh} is not empty.

To bound the stability condition, we define 
\begin{equation}\label{sigma}
\sigma(t)=\max_x\left\{\frac{1}{1+l^2-\left(f(x)\right)^2}-\frac{1}{1+\left(\pax f(x)\right)^2}\right\}.
\end{equation}
Using this function, the stability condition is equivalent to $\sigma(t)<0$.

\begin{remark}
Let us mention that 
$$
f(x)=\pm l\sin(x),\pm l\cos(x),
$$
are steady solutions in the unstable case.
\end{remark}

\subsection{Notation and functional framework}
We write $Hu$ for the Hilbert transform of the function $u$ and $\Lambda u=\sqrt{-\pax^2}u=\pax Hu$ for the Zygmund operator, \emph{i.e.}
$$
\widehat{Hu}(\xi)=-i\frac{\xi}{|\xi|}\hat{u}(\xi)\text{ and }\widehat{\Lambda u}(\xi)=|\xi|\hat{u}(\xi),
$$
where $\hat{\cdot}$ denotes the usual Fourier transform. 

We write $\Omega$ for our spatial domain. From this point onwards, we consider either $\Omega=\RR$ or $\Omega=\TT$, in particular our domain is always onedimensional. We write $H^s(\Omega)$ for the usual $L^2$-based Sobolev spaces with norm
$$
\|f\|_{H^s}^2=\|f\|_{L^2}^2+\|f\|_{\dot{H}^s}^2, \|f\|_{\dot{H}^s}=\|\Lambda^s f\|_{L^2},
$$
We denote $l>0$ the depth and
$$
H^s_l=\left\{f\in H^s\text{ s.t. }\|f\|_{L^\infty}< l\right\}.
$$
The fractional $L^p$-based Sobolev spaces, $W^{s,p}(\Omega)$, are 
$$
W^{s,p}=\left\{f\in L^p(\Omega), \pax^{\lfloor s\rfloor} f\in L^p, \frac{|\pax^{\lfloor s\rfloor}f(x)-\pax^{\lfloor s\rfloor}f(y)|}{|x-y|^{\frac{1}{p}+(s-\lfloor s\rfloor)}}\in L^p(\Omega\times\Omega)\right\},
$$
with norm
$$
\|f\|_{W^{s,p}}^p=\|f\|_{L^p}^p+\|f\|_{\dot{W}^{s,p}}^p, 
$$
$$
\|f\|_{\dot{W}^{s,p}}^p=\|\pax^{\lfloor s\rfloor} f\|^p_{L^p}+\int_\Omega\int_\Omega\frac{|\pax^{\lfloor s\rfloor}f(x)-\pax^{\lfloor s\rfloor}f(y)|^p}{|x-y|^{1+(s-\lfloor s \rfloor)p}}.
$$
Notice that $W^{k+s,\infty}$, $k\in\NN$, $1>s\geq0$, reduces to the usual H\"older continuous space $C^{k+s}$. We define
$$
\mathcal{C}_l=\frac{l^2}{1+l^2}. 
$$
This is the constant appearing in the linear problem. 

\subsection{Statement of the results for \eqref{model1}}
In this section we collect the statement of the results concerning the equation \eqref{model1}. We start with local well-posedness of classical solutions and a continuation criterion when the initial data is in the stable regime:
\begin{teo}\label{teo1}
Let $f_0\in H^s_l(\Omega)$, $s\geq3$, $l>0$, $\epsilon>0$ and $\Omega=\TT,\RR$, be an initial data satisfying the stability condition \eqref{eqRayleigh}. Then there exists a unique solution $f(x,t)$ to \eqref{model1} such that
$$
f(x,t)\in C([0,T(f_0)],H^3)\cap L^2([0,T(f_0)],H^{3.5}).
$$ 
Moreover, if $T^*$ is the maximum lifespan of the solutions, then, $T^*=\infty$ or
\begin{equation}\label{continuationcriteria}
\sup_{0\leq t\leq {T^*}}\|f(t)\|_{W^{2+\epsilon,\infty}}=\infty.
\end{equation}
\end{teo}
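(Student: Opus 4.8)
The plan is to prove local well-posedness by an energy method: the stability condition \eqref{eqRayleigh} makes \eqref{model1} a parabolic equation with a non-degenerate, order-one, nonlocal diffusion, and the $H^3$ energy estimate closes exactly because of this. Throughout I write \eqref{model1} as $\partial_t f=-a(f)\,\Lambda f$ with $a(f)=\frac{1}{1+(\partial_x f)^2}-\frac{1}{1+l^2-(f)^2}$. Note that by \eqref{sigma} one has $a(f)\ge-\sigma(t)$ pointwise, and that in the stable regime \eqref{eqRayleigh} forces $1+l^2-f^2>1+(\partial_x f)^2\ge1$, so $1+l^2-f^2$ is bounded below by $1$ and the $x$-derivatives of $a(f)$ that we need are bounded smooth functions of $f$ and $\partial_x f$ with no hidden singularity.

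First I would introduce regularized problems, e.g. replacing $a(f)\Lambda f$ by $\mathcal{J}_\vartheta*\!\big(a(\mathcal{J}_\vartheta*f)\,\Lambda\mathcal{J}_\vartheta*f\big)$; on the open set $\{g\in H^3:\|g\|_{L^\infty}<l\}$ this is a locally Lipschitz ODE, so Picard--Lindel\"of gives a unique maximal solution $f^\vartheta\in C^1([0,T_\vartheta),H^3)$, with $\|f^\vartheta(t)\|_{L^\infty}\le\|f_0\|_{L^\infty}<l$ on a uniform time interval since the mollifier is nonnegative. The core of the proof is then a $\vartheta$-uniform estimate. Applying $\partial_x^3$ to the equation and pairing with $\partial_x^3 f$ (with the trivial $L^2$ bound, so that $\|f\|_{H^3}^2=\|f\|_{L^2}^2+\|\partial_x^3 f\|_{L^2}^2$), and expanding $\partial_x^3(a\Lambda f)=a\,\Lambda\partial_x^3 f+3\,\partial_x a\,\Lambda\partial_x^2 f+3\,\partial_x^2 a\,\Lambda\partial_x f+\partial_x^3 a\,\Lambda f$, the term with no derivatives on $a$ is handled with the C\'ordoba--C\'ordoba pointwise inequality $g\Lambda g\ge\tfrac12\Lambda(g^2)$: writing $a=-\sigma(t)+\tilde a$ with $\tilde a\ge0$ one gets $-\langle a\,\Lambda\partial_x^3 f,\partial_x^3 f\rangle\le\sigma(t)\|f\|_{\dot{H}^{3.5}}^2+\tfrac12\|\Lambda a\|_{L^\infty}\|\partial_x^3 f\|_{L^2}^2$, and here the hypothesis $\sigma(0)<0$, propagated for a short time by the continuity of $t\mapsto f(t)$ in $C^1$, keeps the dissipation non-degenerate. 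The terms with $\partial_x a$ and $\partial_x^2 a$ are controlled by H\"older's inequality, since $\Lambda\partial_x^2 f$ and $\Lambda\partial_x f$ are of order at most $3$; the delicate term is $\langle\partial_x^3 a\,\Lambda f,\partial_x^3 f\rangle$, because by the chain rule $\partial_x^3 a$ contains a contribution of the form $(\text{bounded})\cdot\partial_x^4 f$, one derivative too many, which is rescued by the identity $\partial_x^4 f\cdot\partial_x^3 f=\tfrac12\partial_x(|\partial_x^3 f|^2)$ and one integration by parts. Using Sobolev embedding ($H^3\hookrightarrow C^{2+\epsilon}$) for the remaining factors, this yields $\frac{d}{dt}\|f\|_{H^3}^2+c_0\|f\|_{\dot{H}^{3.5}}^2\le C(\|f\|_{H^3},l)$, uniformly in $\vartheta$, on a time interval $[0,T]$ with $c_0>0$, where $T,c_0$ depend only on $\|f_0\|_{H^3}$, $l$ and $\sigma(0)$ and $T$ is chosen small enough that a continuity/bootstrap argument keeps $\|f(t)\|_{L^\infty}<l$, $\sigma(t)\le\sigma(0)/2<0$ and $\|f(t)\|_{H^3}\le2\|f_0\|_{H^3}$ on $[0,T]$. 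Gronwall then gives the uniform bound in $C([0,T],H^3)$ and, after integration in time, in $L^2([0,T],H^{3.5})$.

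From the equation, $\partial_t f^\vartheta$ is uniformly bounded in $L^\infty([0,T],H^2)$, so Aubin--Lions--Simon produces a subsequence converging strongly in $C([0,T],H^{3-\delta})\hookrightarrow C([0,T],C^2)$, which is enough to pass to the limit in $a(f^\vartheta)\Lambda f^\vartheta$ and obtain a solution $f\in L^\infty([0,T],H^3)\cap L^2([0,T],H^{3.5})$; weak-$\ast$ continuity in $H^3$ together with the energy inequality (which gives $\limsup_{t\to t_0}\|f(t)\|_{H^3}\le\|f(t_0)\|_{H^3}$, in particular at $t_0=0$) upgrades this to $f\in C([0,T],H^3)$. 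For uniqueness, set $w=f_1-f_2$, so $\partial_t w=-a(f_1)\Lambda w-(a(f_1)-a(f_2))\Lambda f_2$; the first term is controlled in $L^2$ by C\'ordoba--C\'ordoba as above (using $a(f_1)\ge0$), and writing $a(f_1)-a(f_2)=b_1 w+b_2\,\partial_x w$ with $b_1,b_2$ bounded, the $\partial_x w$ contribution is absorbed via $\partial_x w\cdot w=\tfrac12\partial_x(w^2)$ and integration by parts, so $\frac{d}{dt}\|w\|_{L^2}^2\le C\|w\|_{L^2}^2$ and $w\equiv0$. Finally, for the continuation criterion \eqref{continuationcriteria} one argues by contradiction: if $T^*<\infty$ while $\sup_{[0,T^*)}\|f\|_{W^{2+\epsilon,\infty}}<\infty$, then, since the solution remains in the stable regime (which as observed keeps the diffusion non-degenerate and $1+l^2-f^2$ bounded below), the estimate above shows $\|f(t)\|_{H^3}$ stays bounded up to $T^*$, so the solution can be restarted slightly before $T^*$ and extended past it, contradicting maximality.

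The hard part will be the $H^3$ estimate: extracting the full non-degenerate dissipation $\|f\|_{\dot{H}^{3.5}}^2$ from the top-order pairing while simultaneously controlling the error terms — in particular the one in which all three derivatives fall on the coefficient $a(f)$ and produce a fourth derivative of $f$ — and making sure that the stability condition \eqref{eqRayleigh} and the constraint $\|f\|_{L^\infty}<l$ really do persist on the existence interval rather than degenerating.
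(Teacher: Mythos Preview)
Your proposal is correct and follows essentially the same route as the paper: the $H^3$ energy estimate with the C\'ordoba--C\'ordoba inequality to extract $\sigma(t)\|f\|_{\dot H^{3.5}}^2$ from the top-order term, the integration-by-parts trick $\partial_x^4 f\cdot\partial_x^3 f=\tfrac12\partial_x(\partial_x^3 f)^2$ for the term where all derivatives hit $a$, and the $L^2$ uniqueness are exactly what the paper does. The only cosmetic difference is that the paper packages the persistence of the stable regime by adding $\|d[f]\|_{L^\infty}+\|D[f]\|_{L^\infty}$ (with $d[f]=(l^2-f^2-(\partial_x f)^2)^{-1}$, $D[f]=(l^2-f^2)^{-1}$) to the energy and deriving ODEs for them, whereas you handle it by a continuity/bootstrap argument; and for the continuation criterion the paper invokes a pointwise lemma $\Lambda F(h)\le C\|h\|_{W^{\gamma,\infty}}^2+F'(h)\Lambda h$ to bound $\|\Lambda a\|_{L^\infty}$ in terms of $\|f\|_{W^{2+\epsilon,\infty}}$, which is what your sketch needs as well.
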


Notice that the continuation criteria \eqref{continuationcriteria}, as is written in the previous result, doesn’t deal with the possibility of reaching the unstable regime. However, in Proposition \ref{prop2} below we address this question.

We also prove that there is a unique local smooth solution even when the Rayleigh-Taylor condition \eqref{eqRayleigh} is not satisfied but our initial data is analytic. We prove this result complexifying the equation and using a Cauchy-Kowalevski Theorem (see \cite{nirenberg1972abstract} and \cite{nishida1977note}).

We define the complex strip $\Ss_r=\{x+i\xi,|\xi|<r\},$ and $\gamma=x\pm ir$, $\gamma'=x\pm ir'$ for $x\in\Omega$. We consider the Hardy-Sobolev spaces (see \cite{bakan2007hardy})
\begin{equation}\label{Xdef}
H^3(\Ss_r)=\{f(x+i\xi) \text{ analytic on } \Ss_r \text{ s.t. }f(x\pm ir)\in H^3(\Omega) \text{ and } f(x)\in\RR\},
\end{equation}
with norm
$$
\|f(\gamma)\|^2_{H^3(\Ss_r)}=\sum_{\xi=\pm r}\|f(\gamma)\|^2_{H^3(\Omega)}.
$$
These spaces form a Banach scale with respect to the parameter $r$. In the same way we define $\|f\|^2_{L^2(\Ss_r)}=\sum_{\xi=\pm r}\|f(\gamma)\|^2_{L^2(\Omega)}.$ We also have, for $0<r'<r$, 
\begin{equation}\label{cauchy2}
\|\partial_x \cdot\|_{L^2(\Ss_{r'})}\leq\frac{C}{r-r'}\|\cdot\|_{L^2(\Ss_r)}.
\end{equation}
The complex extension of the equation can be written as 
\begin{equation}\label{eqmodelIA}
\partial_t f(\gamma)=\left(-\frac{1}{1+(\partial_x f(\gamma))^2}+\frac{1}{1+l^2-\left(f(\gamma)\right)^2}\right)\Lambda f(\gamma),
\end{equation}
where
$$
\Lambda u(\gamma)=\frac{1}{\pi}\text{P.V.}\int_\RR\frac{u(\gamma)-u(\gamma-\eta)}{\eta^2}d\eta,\text{ }
\frac{1}{2\pi}\text{P.V.}\int_\TT\frac{u(\gamma)-u(\gamma-\eta)}{\sin^2\left(\frac{\eta}{2}\right)}d\eta.
$$
Notice that the variable $\eta$ is a real number: $\eta\in\Omega$. Given a positive $\tau<1$, we define 
\begin{equation*}
d_1[f](\gamma)=\frac{1}{\tau+l^2-\left(\re f(\gamma)\right)^2}, d_2[f](\gamma)=\frac{1}{\tau-\left(\im \pax f(\gamma)\right)^2},
\end{equation*}
Given $R>0$, we define the open set
\begin{equation*}
\mathcal{O}^\tau_R=\{f\in H^3(\Ss_r), \text{ s.t. }0<d_1[f]<R,0<d_2[f]<R, \|f\|_{H^3(\Ss_r)}<R\}.
\end{equation*}
We remark that in this set we have
$$
(\re f(\gamma))^2<l^2+\tau, (\im \pax f(\gamma))^2<\tau.
$$

\begin{teo}\label{teoCK}Let $f_0\in \mathcal{O}^{\tau_0}_{R_0}$ for some $0<\tau_0<1$, $R_0>0$ be the initial data for \eqref{model1}. Then, there exists $T(f_0)$ and a unique solution $f(x,t)\in C([-T,T],H^3(\RR))$. 
\end{teo}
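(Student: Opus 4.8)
The plan is to prove Theorem \ref{teoCK} via an abstract Cauchy--Kowalevski theorem in the scale of Banach spaces $\{H^3(\Ss_r)\}_{0<r\leq r_0}$, following the formulation of Nirenberg \cite{nirenberg1972abstract} and Nishida \cite{nishida1977note}. Concretely, one writes \eqref{eqmodelIA} as $\partial_t f = F[f]$ and must verify: (i) for each pair $0<r'<r\leq r_0$, the map $F$ sends a ball in $H^3(\Ss_r)$ into $H^3(\Ss_{r'})$; (ii) $F$ satisfies the loss-of-derivative estimate $\|F[f]-F[g]\|_{H^3(\Ss_{r'})}\leq \frac{C}{r-r'}\|f-g\|_{H^3(\Ss_r)}$ uniformly for $f,g$ in the ball; and (iii) $\|F[f]\|_{H^3(\Ss_{r'})}\leq \frac{C}{r-r'}$ (or the slightly weaker form with $\|F[0]\|$ controlled). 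Granted these, the abstract theorem produces $T(f_0)>0$ and a unique solution $f(\gamma,t)$ analytic in $\Ss_{r(t)}$ with $r(t)$ shrinking linearly, hence in particular $f(x,t)\in C([-T,T],H^3(\RR))$ after restricting to $\xi=0$; note the problem is reversible in time because no sign condition on the diffusion is used in this argument, which is why we get existence on $[-T,T]$.

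The structure of the verification is as follows. First I would record the algebraic structure of $F[f] = \big(-\frac{1}{1+(\pax f)^2}+\frac{1}{1+l^2-f^2}\big)\Lambda f$: the two rational factors are, on the set $\mathcal{O}^\tau_R$, bounded away from their poles thanks to the conditions $0<d_1[f]<R$ and $0<d_2[f]<R$ (here is where the quantities $\re f$ and $\im\pax f$ matter: one checks that $|1+(\pax f(\gamma))^2|$ and $|1+l^2-(f(\gamma))^2|$ stay bounded below by a positive constant depending only on $\tau,R$, since e.g. $\re(1+l^2-f^2) = 1+l^2-(\re f)^2+(\im f)^2 \geq 1+l^2-(\re f)^2 > \tau/R$, and similarly for the other denominator using $\tau-(\im\pax f)^2>\tau/R$). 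Then the rational functions of $f$ are smooth, bounded, and Lipschitz as maps on $H^3(\Ss_r)$ into itself — this is a routine application of the algebra property of $H^3$ in one dimension ($H^3\hookrightarrow C^2$) plus the chain rule, with no loss of regularity. The only term carrying a derivative loss is $\Lambda f$: from the convolution representation given in the excerpt, $\Lambda$ maps $H^3(\Ss_r)$ continuously into $H^{3-1}(\Ss_r)\hookrightarrow H^3(\Ss_{r'})$ by the Cauchy-type estimate \eqref{cauchy2} (applied twice, or once after noting $\Lambda f = H\pax f$ and $H$ is bounded on $H^3(\Ss_r)$ — one must check the kernel $1/\eta^2$, resp. $1/\sin^2(\eta/2)$, is handled by the principal value so that $\Lambda$ acting on functions analytic in the strip is well defined; this reduces to the standard fact that $\Lambda$ commutes with the horizontal translation defining the strip and acts as a Fourier multiplier on each horizontal line).

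Putting these together: $\|F[f]\|_{H^3(\Ss_{r'})} \leq C(R)\,\|\Lambda f\|_{H^3(\Ss_{r'})} \leq \frac{C(R)}{r-r'}\|f\|_{H^3(\Ss_r)} \leq \frac{C(R,R_0)}{r-r'}$, and for the Lipschitz bound one writes $F[f]-F[g] = (\text{rational}[f]-\text{rational}[g])\Lambda f + \text{rational}[g](\Lambda f - \Lambda g)$, estimating the first summand by the Lipschitz continuity of the rational part in $H^3(\Ss_{r'})\subset H^3(\Ss_r)$ together with $\|\Lambda f\|_{H^3(\Ss_{r'})}\leq \frac{C}{r-r'}\|f\|_{H^3(\Ss_r)}$, and the second by boundedness of the rational part times the Cauchy estimate on $\Lambda(f-g)$. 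One also needs the elementary continuity-in-time / measurability hypotheses of the abstract theorem, which hold since $F$ is (real-)analytic in $f$. The main obstacle — and the step that deserves the most care — is item (i): ensuring that the solution stays inside $\mathcal{O}^\tau_R$ (equivalently, that $d_1[f],d_2[f]$ do not blow up) over the time interval the abstract theorem provides, i.e. that the \emph{open} set $\mathcal{O}^\tau_R$ is preserved; this is arranged by choosing $R$ slightly larger than $R_0$ and $T$ small, using the quantitative bound $\|F[f]\|_{H^3(\Ss_{r'})}\leq C/(r-r')$ to control $\|f(t)-f_0\|$ and hence the drift of $\re f$ and $\im\pax f$ — a shrinking-strip/shrinking-time bootstrap that is standard in Cauchy--Kowalevski arguments but must be bookkept carefully because two different "defect" functionals $d_1,d_2$ (one on $\re f$, one on $\im\pax f$) must be simultaneously controlled.
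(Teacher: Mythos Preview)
Your proposal is correct and follows essentially the same route as the paper: the paper packages your items (i)--(ii) as Lemma~\ref{CKprop}, then runs the Nirenberg--Nishida Cauchy--Kowalevski scheme via Picard iteration, and closes by enlarging $(R_0,\tau_0)$ to $(R,\tau)$ and shrinking $T$ so that the defect functionals $d_1,d_2$ stay in $(0,R)$ along the iterates---exactly the bootstrap you flag at the end. One small correction: your stated lower bounds ``$>\tau/R$'' for the denominators are not quite what the constraints $0<d_i[f]<R$ give; the clean bounds are $|1+(\pax f)^2|\geq \re(1+(\pax f)^2)\geq 1-(\im\pax f)^2>1-\tau$ and $|1+l^2-f^2|\geq 1+l^2-(\re f)^2>(1-\tau)+1/R$, which is what the paper (implicitly) uses.
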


This result is interesting because there exist functions such that $f(\tilde{x})=l$ in this set $O^\tau_R$. For instance one can consider $f^a_0(x)=a\cos(x)+l-a$ for a small enough $a$. In particular, this case is analogous to the case where the initial data reaches the boundary, \emph{i.e.} the large amplitude regime.

We study the decay of some lower order norms and other qualitative properties:
\begin{prop}\label{prop1}
Given $f_0\in H^3_l(\Omega),$ $l>0$, $\Omega=\RR,\TT$, in the stable regime, then the solution of \eqref{model1} verifies:
\begin{itemize}
\item 
The even/odd symmetry of the initial data propagates.
\item 
$$
\|f(t)\|_{L^\infty(\Omega)}\leq\|f_0\|_{L^\infty(\Omega)},
$$
\item Assume that $f_0\in H^3_l(\TT)$ is odd, then 
\begin{equation}\label{decay}
\|f(t)\|_{L^\infty(\TT)}\leq \frac{e^{-Ct}\|f_0\|_{L^\infty(\TT)}}{1+\mathcal{A}\left(e^{-Ct}-1\right)},
\end{equation}
where $\mathcal{A}$ is defined in \eqref{dimensionless}.
\item As long as the solution remains in the stable case, the solution is in $L^\infty_t\dot{H}^{0.5}_x\cap L^2_t\dot{H}^{1}_x$ and we have the following energy balance
$$
\|f(t)\|^2_{\dot{H}^{0.5}(\Omega)}-2\int_0^t \sigma(s)\|f(s)\|^2_{\dot{H}^{1}(\Omega)}ds\leq \|f_0\|^2_{\dot{H}^{0.5}(\Omega)},
$$
where $\sigma(t)<0$ is defined in \eqref{sigma}. Moreover, if the solution is in the stable regime up to time $T$, then
$$
\sup_{0\leq t\leq T}\|f(t)\|_{L^2}\leq c(\|f_0\|_{H^{0.5}},T).
$$ 

\item Assume that $f_0\in H^3_l(\TT)$, then 
\begin{equation}\label{decayh0.5}
\|f(t)\|_{\dot{H}^{0.5}(\TT)}\leq e^{2\int_0^t\sigma(s)ds}\|f_0\|_{\dot{H}^{0.5}(\TT)}.
\end{equation}
\item Given $f_0\in H^3_l(\RR)$ and assuming that the solution is in the stable regime in the time interval $[0,T]$, we obtain
$$
\|f(t)\|_{L^\infty(\RR)}\leq \left(\left(\frac{\frac{1-\mathcal{A}}{\mathcal{A}}}{1+l^2}\right)\frac{3t}{\mathfrak{C}(f_0,T)}+\frac{1}{\|f_0\|^3_{L^\infty(\RR)}}\right)^{-1/3},
$$
where 
$$
\mathfrak{C}(f_0,T)=\|f_0\|_{L^2}+ \frac{2\sqrt{T}\|f_0\|_{\dot{H}^{0.5}}}{\min_{0\leq s\leq T}\sqrt{|\sigma(s)|}},
$$ 
is a bound for $\|f(t)\|_{L^2(\RR)}$.

\end{itemize}
\end{prop}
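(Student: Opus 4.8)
The plan is to dispatch the six assertions separately; the first five are maximum‑principle and energy estimates, and only the last needs a genuinely nonlinear argument. The symmetry statement and the sup bound use only uniqueness from Theorem \ref{teo1}: since $\Lambda$ commutes with the reflection $x\mapsto-x$ and $\Lambda(-f)=-\Lambda f$, while $1/(1+(\pax f)^2)$ and $1/(1+l^2-f^2)$ are invariant under both $f\mapsto-f$ and $x\mapsto-x$, if $f(x,t)$ solves \eqref{model1} so do $f(-x,t)$ and $-f(-x,t)$; hence even/odd data stay even/odd. For the sup bound put $M(t)=\max_x f(\cdot,t)$, $m(t)=\min_x f(\cdot,t)$; both are Lipschitz in $t$, and at a point $x^*$ realizing the maximum one has $\pax f(x^*)=0$ and $\Lambda f(x^*)=\tfrac1\pi\text{P.V.}\!\int\tfrac{f(x^*)-f(x^*-\eta)}{\eta^2}\,d\eta\ge0$, so
\[
M'(t)=\Big(-1+\tfrac{1}{1+l^2-M(t)^2}\Big)\Lambda f(x^*,t)=-\tfrac{l^2-M(t)^2}{1+l^2-M(t)^2}\,\Lambda f(x^*,t)\le0
\]
because $|M(t)|<l$ (a bound the computation itself propagates); symmetrically $m'(t)\ge0$, so $\|f(t)\|_{L^\infty}=\max\{M(t),-m(t)\}$ is non‑increasing.

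The odd decay on $\TT$ refines this computation. For odd $f$ one has $\int_\TT f=0$ and $\|f(t)\|_{L^\infty}=M(t)$, so using $\sin^2(\eta/2)\le1$ and $M(t)-f(x^*-\eta,t)\ge0$,
\[
\Lambda f(x^*,t)=\tfrac{1}{2\pi}\text{P.V.}\!\int_\TT\tfrac{M(t)-f(x^*-\eta,t)}{\sin^2(\eta/2)}\,d\eta\ \ge\ \tfrac{1}{2\pi}\int_\TT\big(M(t)-f(x^*-\eta,t)\big)\,d\eta=M(t).
\]
Plugging this and $\tfrac{l+M}{1+l^2-M^2}\ge\tfrac{l}{1+l^2}=\tfrac{\mathcal{C}_l}{l}$ into the formula for $M'$ gives the logistic inequality $M'(t)\le-\tfrac{\mathcal{C}_l}{l}M(t)\big(l-M(t)\big)$, whose integration (a Bernoulli equation) is precisely \eqref{decay} with $C=\mathcal{C}_l$.

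For the energy estimates I would test \eqref{model1} with $\Lambda f$; by self‑adjointness of $\Lambda$,
\[
\tfrac12\tfrac{d}{dt}\|f\|_{\dot H^{0.5}}^2=\int_\Omega\Big(-\tfrac{1}{1+(\pax f)^2}+\tfrac{1}{1+l^2-f^2}\Big)(\Lambda f)^2\,dx\ \le\ \sigma(t)\,\|f\|_{\dot H^{1}}^2
\]
by the very definition \eqref{sigma} of $\sigma$; integrating in time gives the stated balance, hence $f\in L^\infty_t\dot H^{0.5}\cap L^2_t\dot H^1$ with $\int_0^T\|f\|_{\dot H^1}^2\,ds\le\|f_0\|_{\dot H^{0.5}}^2/(2\min_{[0,T]}|\sigma|)$. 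On $\TT$, mean‑zero functions satisfy $\|f\|_{\dot H^1}\ge\|f\|_{\dot H^{0.5}}$, so the same identity plus Gr\"onwall yields the exponential decay \eqref{decayh0.5}. For the $L^2$ bound, write $\pat f=a\,\Lambda f$ with $\|a\|_{L^\infty}\le1$; Cauchy--Schwarz gives $\tfrac{d}{dt}\|f\|_{L^2}\le\|f\|_{\dot H^1}$, hence $\|f(t)\|_{L^2}\le\|f_0\|_{L^2}+\sqrt t\big(\int_0^t\|f\|_{\dot H^1}^2\big)^{1/2}\le\mathfrak{C}(f_0,T)$ for $t\le T$.

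The last item is the only real obstacle. Following $M(t)$ (which we may take to be the maximum, by the $f\mapsto-f$ symmetry), $M'(t)=-\tfrac{l^2-M^2}{1+l^2-M^2}\Lambda f(x^*,t)$, so the whole point is a good nonlinear lower bound for $\Lambda f$ at the maximum, on the line. Here I would invoke a C\'ordoba--C\'ordoba type estimate: cutting the principal value at radius $\rho>0$, discarding the nonnegative contribution from $|\eta|<\rho$, and bounding the tail by Cauchy--Schwarz with $\|f(t)\|_{L^2}\le\mathfrak{C}(f_0,T)$ gives $\Lambda f(x^*,t)\ge\tfrac{2M(t)}{\pi\rho}-\tfrac{c\,\mathfrak{C}(f_0,T)}{\rho^{3/2}}$, and optimizing in $\rho$ turns this into a lower bound of the form $\Lambda f(x^*,t)\gtrsim M(t)^{3}/\mathfrak{C}(f_0,T)^{2}$. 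Combining this with $M(t)\le\|f_0\|_{L^\infty}=\mathcal{A}l$ (which controls both $l-M(t)\ge l(1-\mathcal{A})$ and $1+l^2-M(t)^2\le1+l^2$) leads to a Bernoulli‑type differential inequality $M'(t)\le-\kappa\,M(t)^{p}$ with $p>1$ and $\kappa$ built from $\tfrac{(1-\mathcal{A})/\mathcal{A}}{1+l^2}$ and $\mathfrak{C}(f_0,T)$; integrating it (through $\tfrac{d}{dt}M(t)^{-3}\ge\text{const}$) produces the stated $(\,\cdot\,)^{-1/3}$ profile. The hard part is precisely calibrating this last step --- pinning down the exact power of $M$ in the C\'ordoba--C\'ordoba bound and keeping track of how $l$, $\mathcal{A}$ and $\mathfrak{C}(f_0,T)$ combine --- so that the integrated inequality reproduces the constants in the statement; everything else follows routinely once the maximum‑principle mechanism is in place.
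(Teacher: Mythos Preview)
Your treatment of the first five items is essentially the paper's: symmetry via uniqueness, the $L^\infty$ maximum principle via Rademacher and the sign of $\Lambda f$ at an extremum, the logistic refinement on $\TT$ using $\Lambda f(x^*)\ge M$ for mean-zero periodic data, the $\dot H^{0.5}$ balance by testing against $\Lambda f$, backward bootstrapping for $L^2$, and Poincar\'e plus Gr\"onwall for \eqref{decayh0.5}. (One cosmetic point: the inequality $\|f\|_{\dot H^{0.5}}\le\|f\|_{\dot H^1}$ on $\TT$ does not need $f$ to have mean zero, since neither seminorm sees the zero Fourier mode.)

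In Step 6 your strategy is again the paper's (nonlinear lower bound on $\Lambda f$ at the maximum, then a Bernoulli ODE), but your explicit bounds do not assemble into the stated $(\cdot)^{-1/3}$ profile. With $\Lambda f(x^*)\gtrsim M^3/\mathfrak{C}(f_0,T)^2$ and the \emph{constant} lower bound $l-M\ge l(1-\mathcal{A})$ on the prefactor you get $M'\le -\kappa M^3$, which integrates through $\tfrac{d}{dt}M^{-2}$ to a $(\cdot)^{-1/2}$ decay, not $(\cdot)^{-1/3}$; your sentence ``integrating it (through $\tfrac{d}{dt}M^{-3}\ge\text{const}$)'' is inconsistent with the ODE you wrote down. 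The paper's extra ingredient is to extract one more factor of $M$ from the prefactor by writing
\[
l-M=\Big(\tfrac{l}{M}-1\Big)M\ \ge\ \Big(\tfrac{l}{\|f_0\|_{L^\infty}}-1\Big)M=\tfrac{1-\mathcal{A}}{\mathcal{A}}\,M,
\]
using the already-proved monotonicity $M(t)\le\|f_0\|_{L^\infty}$. With this, the prefactor contributes a full $M$ and the ODE becomes $M'\le -\kappa M^4$, which indeed integrates via $\tfrac{d}{dt}M^{-3}\ge 3\kappa$ to the $(\cdot)^{-1/3}$ form. Regarding the power of $\mathfrak{C}(f_0,T)$: your C\'ordoba--C\'ordoba computation correctly produces $\mathfrak{C}^2$ in the denominator, whereas the paper's proof passes directly to $\Lambda f(x_t)\ge M^3/\mathfrak{C}$ without displaying the intermediate step; the decay mechanism and the exponent $-1/3$ are unaffected, but you should be aware that your careful optimization and the paper's stated constant differ in that power.
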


Recall that \eqref{decay} in Proposition \ref{prop1} gives us that, in the case 
$$
\mathcal{A}=\frac{2}{\pi}\|f_0\|_{L^\infty(\TT)}\approx 1,
$$
(the interface is close to the boundary)
$$
\|f(t)\|_{L^\infty(\TT)}\leq \frac{e^{-\frac{\mathcal{C}_l}{l}t}\|f_0\|_{L^\infty(\TT)}}{1+\mathcal{A}\left(e^{-\frac{\mathcal{C}_l}{l}t}-1\right)}\approx \|f_0\|_{L^\infty(\TT)},
$$
and our decay estimate degenerates. This fact has been observed for equation \eqref{eq0.1} in \cite{CGO}. Moreover, it has also been observed in the numerical simulations in Section \ref{secdec} (see Figure \ref{deccase1}).

Notice that there is not a $L^2$ maximum principle, but we can use backward bootstrapping to bound the $L^2$ norm once that we now a bound for $\dot{H}^{0.5}$.

We prove that if the initial data is small, then there exists a global-in-time solution. Furthermore, we obtain some decay estimates in a lower norm. Thus, these results complement the decay rates proved in Proposition \ref{prop1}. We will use the approach in \cite{cheng2012global, shatahnormalforms}. Notice that, given $f_0\in H^3$, there exists a time of existence $T_0=T(f_0)$ and the solution is on the stable regime. For any $T<T_0$, we define the total norm 
\begin{equation}\label{totalnorm}
\vertiii{f}_T=\sup_{0<t<T}\{\|f(t)\|_{X}+\mathcal{D}(t)\|f(t)\|_{Y}\},
\end{equation}
where $X\subset Y$ are Banach spaces. The function $\mathcal{D}(t)\rightarrow\infty$ as $t\rightarrow\infty$ and gives us the decay in the lower order norm. Using Duhamel's principle we write the expression for the mild solution
\begin{equation}\label{Duhamel}
f(x,t)=e^{-t\mathcal{C}_l\Lambda}f_0+\int_0^te^{-(t-s)\mathcal{C}_l\Lambda}NL(s)ds,
\end{equation}
where
\begin{equation}\label{nonlinear}
NL=\Lambda f\left[\frac{(\pax f)^2\left(1+2l^2+l^4-l^2f^2\right)+f^2}{(1+(\pax f)^2)(1+l^2-f^2)(1+l^2)}\right].
\end{equation}
\begin{teo}\label{teo3}
Let $f_0\in H^3(\TT)$ be an odd initial data for equation \eqref{model1} in the stable regime.  Then, there exist $\delta>0$, such that if $\|f_0\|_{H^3(\TT)}\leq \delta$ the corresponding solution is global in time and the solution verifies
$$
\|f(t)\|_{C^1(\TT)}\leq \frac{l}{4} e^{-\mathcal{C}_lt}.
$$
\end{teo}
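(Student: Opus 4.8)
The plan is to combine the local theory of Theorem \ref{teo1} with a global a priori estimate obtained by a bootstrap argument on the mild formulation \eqref{Duhamel}. Since $f_0$ is odd on $\TT$, the odd symmetry propagates (Proposition \ref{prop1}), so at every time $f(t)$ has zero mean and all its Fourier modes live on $\{|n|\geq1\}$; hence $\Lambda$ has a spectral gap and $\|e^{-\tau\mathcal{C}_l\Lambda}g\|_{H^s}\leq e^{-\mathcal{C}_l\tau}\|g\|_{H^s}$, while Cauchy–Schwarz on the Fourier side gives the gain-of-regularity bounds $\|e^{-\tau\mathcal{C}_l\Lambda}g\|_{C^1}\lesssim e^{-\mathcal{C}_l\tau}\|g\|_{H^2}$ and $\|e^{-\tau\mathcal{C}_l\Lambda}g\|_{H^3}\lesssim \tau^{-1/2}e^{-\mathcal{C}_l\tau}\|g\|_{H^{5/2}}$. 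Writing the equation as $\partial_tf=-\mathcal{C}_l\Lambda f+NL$ with $NL$ as in \eqref{nonlinear}, note that $NL=(a+\mathcal{C}_l)\Lambda f$ with $a+\mathcal{C}_l=\tfrac{(\pax f)^2}{1+(\pax f)^2}+\tfrac{f^2}{(1+l^2-f^2)(1+l^2)}$, which vanishes to second order at $f\equiv0$; thus $NL$ is cubic in $(f,\pax f,\Lambda f)$, and this structural fact is what makes small data global. Smallness of $f$ also forces $f^2+(\pax f)^2<l^2$, i.e. keeps the solution in the stable regime, so the stable-regime hypothesis is consistent and propagated.

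First I would set up the continuity argument for the total norm \eqref{totalnorm} with $X=H^3(\TT)$, $Y=C^1(\TT)$ and $\mathcal{D}(t)=e^{\mathcal{C}_lt}$, that is $\vertiii{f}_T=\sup_{0<t<T}\{\|f(t)\|_{H^3}+e^{\mathcal{C}_lt}\|f(t)\|_{C^1}\}$, and assume on $[0,T]$ that $\vertiii{f}_T\leq\varepsilon_0$ with $\varepsilon_0$ small to be chosen. For the top-order norm I would use an energy estimate rather than Duhamel, to avoid the derivative loss in $NL\sim(\text{quadratic})\cdot\Lambda f$: differentiating $\|f\|_{H^3}^2$ and using $\int\Lambda^3(-\mathcal{C}_l\Lambda f)\Lambda^3f=-\mathcal{C}_l\|f\|_{\dot H^{7/2}}^2$ together with commutator/product estimates bounding $\int\Lambda^3\big((a+\mathcal{C}_l)\Lambda f\big)\Lambda^3f$ by $C\|f\|_{C^1}\|f\|_{\dot H^{7/2}}^2+(\text{terms controlled by }\|f\|_{H^3})$, the nonlinear contribution is absorbed into the dissipation once $\|f\|_{C^1}\leq\varepsilon_0$ is small. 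Since $\|f\|_{\dot H^{7/2}}^2\geq\tfrac12\|f\|_{H^3}^2$ by the spectral gap, this yields $\tfrac{d}{dt}\|f\|_{H^3}^2\leq-c\|f\|_{H^3}^2$, hence $\|f(t)\|_{H^3}\leq\|f_0\|_{H^3}$ and, in fact, exponential decay of the high norm as well.

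For the lower-order norm I would use Duhamel \eqref{Duhamel}: $\|f(t)\|_{C^1}\leq\|e^{-t\mathcal{C}_l\Lambda}f_0\|_{C^1}+\int_0^t\|e^{-(t-s)\mathcal{C}_l\Lambda}NL(s)\|_{C^1}\,ds\leq Ce^{-\mathcal{C}_lt}\|f_0\|_{H^3}+C\int_0^te^{-\mathcal{C}_l(t-s)}\|NL(s)\|_{H^2}\,ds$. Product estimates give $\|NL(s)\|_{H^2}\lesssim\|f(s)\|_{C^1}\|f(s)\|_{H^3}\|f(s)\|_{W^{1+\epsilon,\infty}}+\|f(s)\|_{C^1}^2\|f(s)\|_{H^3}$, and interpolating $\|f\|_{W^{1+\epsilon,\infty}}$ between $C^1$ and $H^3$ (together with the high-norm decay just obtained), every term carries a factor $e^{-\beta\mathcal{C}_ls}$ with $\beta>1$; the elementary bound $\int_0^te^{-\mathcal{C}_l(t-s)}e^{-\beta\mathcal{C}_ls}\,ds\leq((\beta-1)\mathcal{C}_l)^{-1}e^{-\mathcal{C}_lt}$ then gives $e^{\mathcal{C}_lt}\|f(t)\|_{C^1}\leq C\delta+C\varepsilon_0^3$. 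Combining the two estimates, $\vertiii{f}_T\leq C_0\delta+C_0\varepsilon_0^3$; choosing $\varepsilon_0=2C_0\delta$ and $\delta$ so small that $C_0(2C_0\delta)^3\leq C_0\delta$, the strictly improved bound $\vertiii{f}_T\leq\varepsilon_0$ closes by continuity in $T$. Since then $f^2+(\pax f)^2\leq2\|f\|_{C^1}^2<l^2$, stability is maintained, and by the continuation criterion \eqref{continuationcriteria} and $H^3(\TT)\hookrightarrow W^{2+\epsilon,\infty}$ the solution is global; taking $\delta$ also so small that $2C_0\delta\leq l/4$ gives $\|f(t)\|_{C^1}\leq\vertiii{f}_Te^{-\mathcal{C}_lt}\leq\tfrac{l}{4}e^{-\mathcal{C}_lt}$.

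The main obstacle I expect is the top-order energy estimate: one must control $\int\Lambda^3\big((a+\mathcal{C}_l)\Lambda f\big)\Lambda^3f\,dx$, where the (small but variable) coefficient $a+\mathcal{C}_l$ multiplies the full-strength order-one operator $\Lambda$, and extract a bound of the form $C\|f\|_{C^1}\|f\|_{\dot H^{7/2}}^2+(\text{lower-order, controlled by }\|f\|_{H^3})$ with a genuinely small constant in front of $\|f\|_{\dot H^{7/2}}^2$; this needs Córdoba–Córdoba-type pointwise/commutator inequalities and care to preserve the good sign of the leading dissipative term. A secondary point is bookkeeping the decay rate so that it comes out as exactly $\mathcal{C}_l$ rather than $\mathcal{C}_l-\epsilon$, which is why the semigroup estimates must be applied with the sharp constant $e^{-\mathcal{C}_l\tau}$ coming from the lowest frequency $|n|=1$.
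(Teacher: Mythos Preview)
Your proposal follows the same overall scheme as the paper: the total norm \eqref{totalnorm} with $X=H^3$, $Y=C^1$, $\mathcal{D}(t)=e^{\mathcal{C}_lt}$, Duhamel for the low norm, energy estimates from Theorem~\ref{teo1} for the high norm, and a continuation argument. The implementation differs in two places.

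First, the paper estimates $\|NL\|_{C^1}$ directly and uses the Poisson-kernel bound \eqref{normlinearoperator}, $\|e^{-t\mathcal{C}_l\Lambda}\|_{L^\infty\to L^\infty}\le e^{-\mathcal{C}_lt}$, whereas you route through $H^2$ via Sobolev embedding and estimate $\|NL\|_{H^2}$. Both are valid; the paper's route is a bit more direct, yours avoids the special $L^\infty\to L^\infty$ estimate.

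Second, and more substantively, the paper does \emph{not} claim exponential decay of $\|f\|_{H^3}$. It only shows $\tfrac{d}{dt}\|f\|_{H^3}^2\le C\|f\|_{H^3}\,\mathcal{P}(\vertiii{f}_T)e^{-\mathcal{C}_lt}$ (an integrable right-hand side, hence boundedness), and then extracts the needed $e^{-2\mathcal{C}_ls}$ decay for $\|NL(s)\|_{C^1}$ purely from the $C^1$ decay, via Gagliardo--Nirenberg interpolation between the decaying $\|\pax f\|_{L^2}\le C\|\pax f\|_{L^\infty}$ and the merely bounded $\|f\|_{H^3}$ (e.g.\ $\|f\|_{H^{1.6}}\lesssim\vertiii{f}_Te^{-0.7\mathcal{C}_lt}$). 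Your stronger claim that $\|f\|_{H^3}$ itself decays exponentially is correct for small zero-mean data (the dissipation $\sigma(t)\|f\|_{\dot H^{3.5}}^2$ dominates the cubic nonlinear terms and Poincar\'e gives coercivity), but it is not needed to close: even with $\|f\|_{H^3}$ only bounded, your bound $\|NL\|_{H^2}\lesssim\|f\|_{C^1}^{2-\theta}\|f\|_{H^3}^{1+\theta}$ already carries decay $e^{-(2-\theta)\mathcal{C}_ls}$ with $\beta=2-\theta>1$. So your argument works either way; the paper's version is slightly leaner, yours records a stronger intermediate fact.
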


\begin{remark}
The oddness assumption is related to the decay estimate. We know that the odd character of the initial data propagates, so the solution will have zero mean and then the equilibrium solution is $f_\infty\equiv0$. However, as the mean is not preserved, it is not clear, and in general it is not true, that the mean will propagate for general initial data with zero mean.
\end{remark}

There are several results with limited regularity for \eqref{full} (see \cite{ccgs-13}). In particular, the authors in this paper proved the global existence of smooth solution corresponding to initial data with small derivative in the Wiener algebra. We prove that \eqref{model1} also captures these features. In particular, we study the equation \eqref{model1} when the initial data is only $H^2$ and we prove local existence for small initial data in both spatial domains, the real line and the torus. 
\begin{teo}\label{teo4}
Let $f_0\in H^2(\Omega)$, $\Omega=\TT,\RR$, be the initial data for equation \eqref{model1} in the stable regime. We assume that $\|f_0\|_{H^{2}(\Omega)}\leq \delta$ for a small enough $\delta>0$. Then, there exists at least one local solution
$$
f\in C([0,T(f_0)],H^2(\Omega))\cap L^2([0,T(f_0)],H^{2.5}(\Omega)).
$$
\end{teo}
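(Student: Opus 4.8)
\textbf{Proof strategy for Theorem \ref{teo4}.} The plan is to build the $H^2$ solution by a viscous regularization together with a uniform a priori estimate and a compactness argument; since the statement only asks for existence (and indeed only for \emph{at least one} solution), a priori bounds plus a limiting procedure are the natural tool rather than a contraction mapping. It is convenient to write \eqref{model1} in the perturbative form $\pat f+\mathcal{C}_l\Lambda f=NL$, where $a[f]:=\tfrac{1}{1+(\pax f)^2}-\tfrac{1}{1+l^2-(f)^2}$ is the diffusion coefficient (so $\pat f=-a[f]\Lambda f$ and $a[0]=\mathcal{C}_l$; the stable regime \eqref{eqRayleigh} is exactly $a[f]>0$, i.e. $\sigma(t)<0$ in the notation of \eqref{sigma}), the remainder $b[f]:=\mathcal{C}_l-a[f]$ is a smooth function of $(f,\pax f)$ vanishing quadratically at the origin, and $NL=b[f]\Lambda f$ coincides with \eqref{nonlinear}. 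We then study the regularized equations $\pat f^\varepsilon=-a[f^\varepsilon]\Lambda f^\varepsilon+\varepsilon\pax^2 f^\varepsilon$ (a mollifier would work as well); being semilinear parabolic, each is locally well-posed in $H^2(\Omega)$ and can be continued as long as $\|f^\varepsilon(t)\|_{H^2}$ stays bounded.

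The heart of the proof is an a priori estimate for $f^\varepsilon$ that is uniform in $\varepsilon$ on a time interval depending only on $\delta$ and $l$. The $L^2$ bound is immediate (the viscous term is favourable and the nonlinear term is lower order). For the $\dot H^2$ bound one computes
\[
\tfrac12\tfrac{d}{dt}\|f^\varepsilon\|_{\dot H^2}^2+\mathcal{C}_l\|f^\varepsilon\|_{\dot H^{2.5}}^2+\varepsilon\|f^\varepsilon\|_{\dot H^{3}}^2=-\int_\Omega\Lambda^{2.5}f^\varepsilon\,\Lambda^{1.5}\!\left(b[f^\varepsilon]\Lambda f^\varepsilon\right),
\]
and drops the nonnegative $\varepsilon$-term. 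Since $H^2(\Omega)\hookrightarrow C^{1,1/2}(\Omega)$ in one dimension, $\|f^\varepsilon\|_{C^1}\lesssim\|f^\varepsilon\|_{H^2}$, so for $\delta$ small (and, propagating, on a short time) we have $a[f^\varepsilon]\geq\mathcal{C}_l/2$, $|f^\varepsilon|<l$ and $\|b[f^\varepsilon]\|_{L^\infty}\lesssim\|f^\varepsilon\|_{C^1}^2$. Splitting off the top-order piece $b[f^\varepsilon]\Lambda^{2.5}f^\varepsilon$ produces a contribution bounded by $\|b[f^\varepsilon]\|_{L^\infty}\|f^\varepsilon\|_{\dot H^{2.5}}^2\lesssim\|f^\varepsilon\|_{C^1}^2\|f^\varepsilon\|_{\dot H^{2.5}}^2$, which is absorbed into $\mathcal{C}_l\|f^\varepsilon\|_{\dot H^{2.5}}^2$ precisely because $\|f^\varepsilon\|_{C^1}$ is small; the commutator $[\Lambda^{1.5},b[f^\varepsilon]]\Lambda f^\varepsilon$ is handled by a fractional Kato--Ponce/commutator inequality together with interpolation, using that $b[f^\varepsilon]$ is quadratic so that $\|b[f^\varepsilon]\|_{\dot H^{1.5}}\lesssim\|f^\varepsilon\|_{C^1}\|f^\varepsilon\|_{\dot H^{2.5}}$; every term thus generated is either $\lesssim\|f^\varepsilon\|_{C^1}^2\|f^\varepsilon\|_{\dot H^{2.5}}^2$ (absorbed) or $\lesssim P(\|f^\varepsilon\|_{H^2})\|f^\varepsilon\|_{H^2}^2$. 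Collecting terms, $\tfrac{d}{dt}\|f^\varepsilon\|_{H^2}^2+\tfrac{\mathcal{C}_l}{2}\|f^\varepsilon\|_{\dot H^{2.5}}^2\le C(\|f^\varepsilon\|_{H^2})\|f^\varepsilon\|_{H^2}^2$, so a continuity/bootstrap argument (using $\|f_0\|_{H^2}\leq\delta$ small to keep $\|f^\varepsilon\|_{C^1}$ small and $|f^\varepsilon|<l$) yields $T=T(\delta,l)>0$, independent of $\varepsilon$, with $\sup_{[0,T]}\|f^\varepsilon(t)\|_{H^2}^2+\int_0^T\|f^\varepsilon(s)\|_{H^{2.5}}^2\,ds\leq C\delta^2$; in particular the stable regime persists on $[0,T]$.

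Next comes the passage to the limit. From the equation, $\pat f^\varepsilon=-a[f^\varepsilon]\Lambda f^\varepsilon+\varepsilon\pax^2 f^\varepsilon$ is bounded in $L^\infty([0,T],L^2(\Omega))$ uniformly in $\varepsilon$ (since $a[f^\varepsilon]$ is bounded, $\Lambda f^\varepsilon\in L^\infty_t\dot H^1$, and $\|\varepsilon\pax^2 f^\varepsilon\|_{L^2}\lesssim\varepsilon\delta\to0$). By the Aubin--Lions--Simon lemma (directly on $\TT$; on $\RR$ after a local cutoff) we extract a subsequence with $f^\varepsilon\to f$ strongly in $C([0,T],H^{2-\kappa}_{\mathrm{loc}})$ for small $\kappa>0$, $f^\varepsilon\rightharpoonup f$ weakly-$*$ in $L^\infty([0,T],H^2)$ and weakly in $L^2([0,T],H^{2.5})$, and $\pat f^\varepsilon\rightharpoonup\pat f$ in $\mathcal{D}'$. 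Because $2-\kappa>3/2$, the strong convergence controls $(f^\varepsilon,\pax f^\varepsilon)$ uniformly on compacta, so $a[f^\varepsilon]\to a[f]$ there and $\varepsilon\pax^2 f^\varepsilon\to0$; multiplying the strong convergence of $a[f^\varepsilon]$ by the weak convergence $\Lambda f^\varepsilon\rightharpoonup\Lambda f$ in $L^2_tH^{3/2}$ passes to the limit in $NL$, so $f$ solves \eqref{model1} in $\mathcal{D}'$. Lower semicontinuity of norms gives $f\in L^\infty([0,T],H^2)\cap L^2([0,T],H^{2.5})$; since $f\in L^2([0,T],H^{2.5})$ and $\pat f=-a[f]\Lambda f\in L^2([0,T],H^{3/2})$ (the latter by a Kato--Ponce product estimate, using $f\in L^2_tH^{2.5}$ and that $a[f]$ is a smooth function of $(f,\pax f)$), the Lions--Magenes chain-rule lemma yields $f\in C([0,T],H^2)$. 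Passing the bound $a[f^\varepsilon]\geq\mathcal{C}_l/2$ to the limit shows the solution stays in the stable regime on $[0,T]$.

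The main obstacle is the a priori estimate, which is borderline at the $H^2$ level: the diffusion coefficient $a[f]$ depends on $\pax f$, so two spatial derivatives landing on $a[f]$ (equivalently on $b[f]$) would produce $\pax^3 f$, which is not controlled by the energy. The way around it is never to differentiate $b[f]$ more than $3/2$ times, the extra half-derivative being afforded by the parabolic smoothing $f\in L^2_tH^{2.5}$ and by the quadratic vanishing of $b$ (which gives $\|b[f]\|_{\dot H^{1.5}}\lesssim\|f\|_{C^1}\|f\|_{\dot H^{2.5}}$), while the remaining $\dot H^{2.5}$ factor of $f$ is reabsorbed into the dissipation $\mathcal{C}_l\|f\|_{\dot H^{2.5}}^2$ at the price of a factor $\|f\|_{C^1}^2$. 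This is exactly why the smallness hypothesis $\|f_0\|_{H^2}\leq\delta$ is imposed, and why only a local (and, a priori, non-unique) solution is claimed.
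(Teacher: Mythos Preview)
Your approach is correct and leads to the result, but it differs in several key technical choices from the paper's proof. The paper regularizes by mollifying the initial data into $H^3$ (where Theorem~\ref{teo1} already gives a solution) rather than adding a viscosity $\varepsilon\pax^2$; more importantly, for the top-order $\dot H^2$ estimate the paper does \emph{not} use Kato--Ponce/commutator bounds, but the C\'ordoba--C\'ordoba pointwise inequality $2\theta\Lambda\theta\geq\Lambda\theta^2$ to ``integrate by parts'' and produce $\sigma(t)\|f\|_{\dot H^{2.5}}^2$ plus $\tfrac12\int\Lambda[-a[f]]\,(\pax^2 f)^2$, and then controls $\Lambda$ acting on the composition $-a[f]$ via the pointwise Lemma~\ref{lemaaux} (inequalities \eqref{eqbound}--\eqref{eqbound2}) and Gagliardo--Nirenberg \eqref{GN2}. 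A second structural difference is that, instead of using the Sobolev embedding $H^2\hookrightarrow C^1$ to bound $\|\pax f\|_{L^\infty}$ by $\|f\|_{H^2}$, the paper tracks $M(t)=\max_x\pax f(x,t)$ directly via Rademacher's theorem and a maximum-principle argument to get $\|\pax f(t)\|_{L^\infty}\leq\|\pax f_0\|_{L^\infty}$ on $\TT$ (and a slightly weaker bound on $\RR$); this is why the paper splits the proof into the cases $\Omega=\TT$ and $\Omega=\RR$, the latter needing a lower bound on $\Lambda u$ at its maximum and the $L^2_t\dot H^1_x$ energy from Proposition~\ref{prop1}.

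What each approach buys: your commutator/Kato--Ponce route is cleaner and treats $\TT$ and $\RR$ uniformly, at the price of being somewhat less sharp (you only get $\|\pax f\|_{L^\infty}\lesssim\|f\|_{H^2}$, not an $L^\infty$ maximum principle). The paper's route, exploiting the pointwise inequality and the maximum principle for $\pax f$, yields the stronger qualitative bound $\|\pax f(t)\|_{L^\infty}\leq\|\pax f_0\|_{L^\infty}$ on $\TT$, which is then reused in Theorem~\ref{teo5} to upgrade local to global existence. If you only need Theorem~\ref{teo4} your argument is perfectly adequate; if you wanted to continue to Theorem~\ref{teo5}, the paper's pointwise $C^1$ control is the extra ingredient.
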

Notice that the solution is classical, but if the initial data is only $H^2$ the well-posedness for arbitrary data can not be achieved by standard energy methods. In the case where the initial data is odd and periodic, we can improve the previous local-in-time result:

\begin{teo}\label{teo5}
Let $f_0\in H^2(\TT)$ be an odd initial data for equation \eqref{model1} in the stable regime.  Then, there exist $\delta>0$, such that if $\|f_0\|_{H^{2}(\TT)}\leq \delta$ there exists at least one global in time solution. This solution verifies
$$
\|f(t)\|_{H^1(\TT)}\leq \frac{l}{4} e^{-\mathcal{C}_lt},\text{ and }\|f(t)\|_{C^1(\TT)}\leq\|f_0\|_{C^1(\TT)}.
$$
\end{teo}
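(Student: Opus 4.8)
The plan is to combine the short-time existence of Theorem \ref{teo4} with an a priori estimate — which is essentially the content of the statement — that keeps the solution small, odd and in the stable regime, and then to continue it globally. Theorem \ref{teo4} gives, for $\|f_0\|_{H^2(\TT)}\le\delta$, at least one solution $f\in C([0,T^*),H^2(\TT))\cap L^2_{loc}([0,T^*),H^{2.5}(\TT))$ on a maximal interval $[0,T^*)$; since $f_0$ is odd, Proposition \ref{prop1} ensures that $f(t,\cdot)$ stays odd, hence of zero mean, so along the flow we may use the spectral-gap inequalities $\|f\|_{\dot{H}^{a}(\TT)}\le\|f\|_{\dot{H}^{b}(\TT)}$ for $a\le b$ and $\|e^{-\tau\mathcal{C}_l\Lambda}f\|_{H^s(\TT)}\le e^{-\tau\mathcal{C}_l}\|f\|_{H^s(\TT)}$ (the non-zero Fourier modes obey $|\xi|\ge1$). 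Writing \eqref{model1} as $\pat f=-\mathcal{C}_l\Lambda f+NL$ with $NL$ as in \eqref{nonlinear}, observe that, while $f$ is small and \eqref{eqRayleigh} holds, $NL$ equals $\Lambda f$ times a coefficient of size $O(\|f\|_{C^1(\TT)}^2)$, so through Duhamel's formula \eqref{Duhamel} the solution is the exponentially decaying linear flow $e^{-t\mathcal{C}_l\Lambda}f_0$ perturbed by an (at least) cubic, subordinate forcing.

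I would then run a continuity/bootstrap argument on the time interval where \eqref{eqRayleigh} holds and $\|f(t)\|_{C^1(\TT)}\le l/2$ — both are true for a short time and keep the denominators in \eqref{model1} away from $0$ — and close three bounds. First, Proposition \ref{prop1} already gives $\|f(t)\|_{L^\infty}\le\|f_0\|_{L^\infty}$. Second, an $H^1$ energy estimate: pairing \eqref{model1} with $f-\pax^2 f$, the linear term contributes $-2\mathcal{C}_l(\|f\|_{\dot{H}^{0.5}}^2+\|f\|_{\dot{H}^{1.5}}^2)$ to $\frac{d}{dt}\|f\|_{H^1}^2$, which by the zero-mean inequality is $\le-2\mathcal{C}_l\|f\|_{H^1}^2$; the nonlinear terms, which cost one derivative when it falls on the coefficient, are handled by the smallness of the data, the C\'ordoba--C\'ordoba pointwise inequality for the top-order piece $\langle\pax f,\,b[f]\,\Lambda\pax f\rangle$ with $0\le b[f]=O(\|f\|_{C^1}^2)$, and the $L^2_t\dot{H}^{2.5}$ smoothing of Theorem \ref{teo4} for the remainder, after which a Gronwall argument — or equivalently Duhamel's formula, exploiting that the cubic forcing is subordinate to the decaying linear flow — yields $\|f(t)\|_{H^1(\TT)}\le\frac l4 e^{-\mathcal{C}_lt}$ once $\delta$ is small. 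Third, a maximum principle for $g=\pax f$, which solves $\pat g=c[f,g]\,\Lambda g+(\pax c[f,g])\,\Lambda f$ with $c[f,g]=-\tfrac1{1+g^2}+\tfrac1{1+l^2-f^2}<0$ in the stable regime: at a point where $|g|$ is maximal one has $\pax g=0$ and $\operatorname{sgn}(g)\Lambda g\ge0$, so the leading term is non-positive and only $\tfrac{2fg}{(1+l^2-f^2)^2}\Lambda f$ survives, bounded by $C\|f\|_{L^\infty}\|g\|_{L^\infty}\|\Lambda f\|_{L^\infty}$ with $\|\Lambda f\|_{L^\infty}$ integrable in time through the smoothing; using the smallness and the exponential decay of $\|f(s)\|_{L^\infty}$, one keeps $\|g(t)\|_{L^\infty}\le\|g_0\|_{L^\infty}$, that is, $\|f(t)\|_{C^1(\TT)}\le\|f_0\|_{C^1(\TT)}$.

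These bounds improve on the bootstrap assumptions for $\delta$ small (so $\|f(t)\|_{C^1}\le\|f_0\|_{C^1}\le\delta<l/2$ and \eqref{eqRayleigh} persists, and the solution never leaves the good regime); since the $H^2$ norm of the solution is controlled for positive times via the parabolic smoothing together with the $C^1$-smallness, one may reapply Theorem \ref{teo4} from any time by a uniform amount, whence $T^*=\infty$ and the stated estimates hold for all $t\ge0$. The main obstacle I expect is the nonlinear analysis: because the diffusion coefficient of \eqref{model1} depends on $\pax f$, differentiating it costs a derivative, so closing both the energy estimate and the maximum-principle estimate genuinely requires exploiting the $L^2_t\dot{H}^{2.5}$ parabolic gain of Theorem \ref{teo4} together with the smallness, all the while preserving the \emph{sharp} linear decay rate $\mathcal{C}_l$, for which the spectral gap $|\xi|\ge1$ coming from the oddness of the data is essential; moreover, since Theorem \ref{teo4} only furnishes ``at least one'' solution of limited regularity, these a priori estimates must in fact be carried out on the regularized approximations used in its proof and then passed to the limit.
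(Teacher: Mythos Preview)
Your overall strategy matches the paper's: invoke Theorem \ref{teo4} for local existence, run a bootstrap on a ``total norm'' combining $H^2$ control with exponential $H^1$ decay (the paper takes $X$-norm $=\|f(t)\|_{H^2}+\int_0^t\|f(s)\|_{H^{2.5}}^2\,ds$ and $Y=H^1$, $\mathcal D(t)=e^{\mathcal C_l t}$), and appeal to the $W^{1,\infty}$ maximum principle already established in the proof of Theorem \ref{teo4} to close. The paper carries the $H^1$ decay through Duhamel directly, estimating $\|NL\|_{L^2}$ and $\|\partial_x NL\|_{L^2}$ (the latter via interpolation so that only $\|f\|_{\dot H^{2.5}}^{2/3}$ appears, integrable against the exponential), and then plugs in the $H^2$ energy inequality from Theorem \ref{teo4} to bound the $X$-part; the resulting polynomial inequality $\vertiii{f}_T\le\|f_0\|_{H^2}+\mathcal Q(\vertiii{f}_T)$ closes for small data.

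Two small points deserve care. First, your C\'ordoba--C\'ordoba step for the top-order piece $\langle\partial_x f,\,b[f]\,\Lambda\partial_x f\rangle$ with $b[f]\ge0$ goes the wrong way: $2\theta\Lambda\theta\ge\Lambda\theta^2$ yields a \emph{lower} bound when the weight is nonnegative, not an upper bound. The paper sidesteps this entirely by using Duhamel for the $H^1$ norm (which you also mention); if you insist on an energy estimate, the correct splitting is $c[f]=(c[f]-\sigma(t))+\sigma(t)$ with $c[f]-\sigma(t)\le0$, as in the proofs of Theorems \ref{teo1} and \ref{teo4}. Second, your $C^1$ argument, as written, only yields $\|\partial_x f(t)\|_{L^\infty}\le\|\partial_x f_0\|_{L^\infty}\exp\bigl(C\!\int_0^t\|f\|_{L^\infty}\|\Lambda f\|_{L^\infty}\,ds\bigr)$, hence $\le(1+O(\delta^2))\|\partial_x f_0\|_{L^\infty}$, not the sharp $\le\|\partial_x f_0\|_{L^\infty}$. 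To get the exact inequality you must, as in the proof of Theorem \ref{teo4}, use the zero-mean Poincar\'e bound $\Lambda(\partial_x f)(x_t)\ge M(t)$ to \emph{absorb} the remainder $\frac{2fg}{(1+l^2-f^2)^2}\Lambda f$ into the strictly negative diffusion $c[f]\,\Lambda(\partial_x f)$, so that $M'(t)\le0$ outright.
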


The two main possibilities for finite time blow up seem to be 
\begin{enumerate}
\item To reach the unstable regime,
\item a blow up of the curvature for the case $f(\tilde{x})=l$.
\end{enumerate}

To reach the unstable regime is similar to the \emph{turning} singularities presents for \eqref{full} and \eqref{eq0.1} in \cite{ccfgl} and \cite{BCG, CGO, GG}. We discard this situation for \eqref{model1}. In particular we prove that, if the solution reaches the unstable case, the $W^{2+\epsilon,\infty}$ blows up first. The second source of singularity, a blow up of the curvature when the initial data reaches the boundaries may take two different forms: a corner-type singularity (blow up of the second derivative while the first derivative is bounded) and a cusp-type singularity (blow up of the first and second derivatives). We prove that, if the second derivative blows up, then the norm $W^{1+\epsilon,\infty}$ blows up first. Notice that, as a consequence of our proof, we get that if the initial data reaches the boundary, then the solution corresponding to this initial data reaches the boundary as long as it remains smooth.

We collect these two results in the next proposition:

\begin{prop}\label{prop2}
Let $f_0\in H^3(\Omega)$ be the initial data for equation \eqref{model1} and $0<T<\infty$ be an arbitrary parameter. We assume that the corresponding solution is $f(x,t)\in C([0,T),H^3(\Omega))$. Then,
\begin{itemize}
\item If $f_0$ is in the stable regime and $T$ is the first time where the solution leaves the stable regime, then 
$$
\limsup_{t\rightarrow T}\int_0^t\|f(s)\|^2_{\dot{W}^{2+\epsilon,\infty}}ds=\infty.
$$ 
\item If $f_0$ is analytic and there exists $\tilde{x}$ such that $f_0(\tilde{x})=l$, then
$$
f(\tilde{x},t)=l,\,\forall 0\leq t\leq T
$$
and 
$$
\max_{0\leq t\leq T}|\pax^2 f(\tilde{x},t)|\leq c(l,f_0)e^{l\int_0^T\Lambda f(\tilde{x},s)ds}.
$$
Consequently, the curvature can not blow up for a $W^{1+\epsilon,\infty}$ solution.
\end{itemize}
\end{prop}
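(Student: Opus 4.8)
The plan is to treat the two bullets separately, since the first is a blow-up criterion and the second is a pointwise-control statement along a fixed characteristic point.

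For the first bullet, the strategy is a contradiction/continuation argument at the level of the quantity $\sigma(t)$ defined in \eqref{sigma}. Suppose, for contradiction, that $\int_0^T\|f(s)\|^2_{\dot W^{2+\epsilon,\infty}}\,ds<\infty$ while $T$ is the first exit time from the stable regime, i.e. $\sigma(t)<0$ for $t<T$ and $\sigma(T)=0$. First I would differentiate $\sigma$ along the flow: for each $t$, $\sigma(t)$ is attained at some point $x_t$, and by a Rademacher/envelope argument $\frac{d}{dt}\sigma(t)=\partial_t\left[\frac{1}{1+l^2-f^2}-\frac{1}{1+(\pax f)^2}\right]\big|_{x=x_t}$ for a.e. $t$. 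Plugging in the equation \eqref{model1} (and its $\pax$-derivative, in the divergence form spirit of \eqref{modelodiegoderiv}) one gets an expression of the schematic form $\frac{d}{dt}\sigma \le C\big(\|f\|_{C^1},l\big)\,\|\Lambda f\|_{L^\infty}\,\sigma + (\text{lower order})$, where crucially the "bad" term carrying the diffusion coefficient $\big(-\frac{1}{1+(\pax f)^2}+\frac{1}{1+l^2-f^2}\big)$ factors a copy of $\sigma$ itself because at the maximizing point $x_t$ the first derivative of the bracket vanishes and the second derivative is $\le 0$; the diffusion term $\Lambda(\pax f)$ or $\Lambda f$ then enters only through $\|\Lambda f\|_{L^\infty}\lesssim\|f\|_{\dot W^{1+\epsilon,\infty}}\le\|f\|_{\dot W^{2+\epsilon,\infty}}$, which is in $L^1_t$ by hypothesis (even $L^2_t$), together with terms controlled by $\|f\|_{C^2}\lesssim\|f\|_{\dot W^{2+\epsilon,\infty}}$. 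A Grönwall inequality with integrable coefficient then forces $\sigma(T)<0$, contradicting $\sigma(T)=0$. I would also need to justify that $\|f(t)\|_{H^3}$ stays finite on $[0,T)$ under the assumed bound, via the standard $H^3$ energy estimate for \eqref{model1}, so that $x_t$ and all the pointwise manipulations make sense; this is routine once the diffusion is nondegenerate, which it is precisely because $\sigma<0$ on $[0,T)$.

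For the second bullet, the key observation is that $x=\tilde x$ is a critical point of $f_0$ that is also an extremum-value point: $f_0(\tilde x)=l=\|f_0\|_{L^\infty}$ (assuming $f_0$ reaches the top boundary), hence $\pax f_0(\tilde x)=0$. I would show this structure is preserved. Evaluating \eqref{model1} at any point where $f=l$ and $\pax f=0$ gives, using $l^2+1-f^2=1$ and $1+(\pax f)^2=1$, a vanishing coefficient $\big(-1+1\big)=0$, so $\pat f(\tilde x,t)=0$ as long as $\pax f(\tilde x,t)=0$ and $f(\tilde x,t)=l$. To close this into an invariance statement I would differentiate the $\pax$-form of the equation and evaluate at $\tilde x$: writing $g=\pax f$, one has $\pat g = \big(-\frac{1}{1+g^2}+\frac{1}{1+l^2-f^2}\big)\Lambda g + \Lambda f\,\pax\big(-\frac{1}{1+g^2}+\frac{1}{1+l^2-f^2}\big)$, and at a point where $g=0,f=l$ the first term vanishes and the second reduces to $\Lambda f\cdot\big(0 + \frac{2f\,g}{(1+l^2-f^2)^2}\big)=0$; so if $g(\tilde x,t)=0$ at a time $t$ then $\pat g(\tilde x,t)=0$, and together with $\pat f(\tilde x,t)=0$ the pair $(f(\tilde x,\cdot),g(\tilde x,\cdot))$ is stationary at $(l,0)$. (Analyticity, via Theorem \ref{teoCK}, supplies the smooth solution on which these identities are legitimate pointwise.) Hence $f(\tilde x,t)=l$ for all $t\in[0,T]$.

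It remains to bound $\pax^2 f(\tilde x,t)$. Let $h(t)=\pax^2 f(\tilde x,t)$. I would differentiate the equation twice in $x$, or equivalently differentiate once the $g$-equation above, and evaluate at $\tilde x$ using the now-established facts $f(\tilde x,t)=l$, $\pax f(\tilde x,t)=0$. After cancelling all terms that carry a factor of $\pax f(\tilde x,t)=0$, the surviving contribution is of the form $\frac{d}{dt}h(t)= \big(\text{coefficient}\big)\cdot \Lambda f(\tilde x,t)\cdot h(t) + \big(\text{terms bounded by } c(l,f_0)\big)$, where the coefficient multiplying $\Lambda f(\tilde x,t)\,h(t)$ is exactly $\pm l$ (coming from $\pax^2\big[\frac{1}{1+l^2-f^2}\big]$ evaluated where $f=l,\pax f=0$, which equals $\frac{2l}{(1+l^2-f^2)^2}\pax^2 f = 2l\,h$, times the $\tfrac12$ or sign bookkeeping), and where the lower-order terms are controlled uniformly by $\|f\|_{C^{1+\epsilon}}$ (which is bounded since we assume a $W^{1+\epsilon,\infty}$ solution) together with $l$ and $f_0$-dependent constants. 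Integrating this linear ODE for $h$ via Grönwall yields $|h(t)|\le c(l,f_0)\exp\big(l\int_0^t\Lambda f(\tilde x,s)\,ds\big)$, i.e. the claimed bound; and since the exponent stays finite for a $W^{1+\epsilon,\infty}$-in-time solution, the curvature cannot blow up while the first derivative remains bounded.

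The main obstacle I anticipate is the first bullet: carefully justifying the differentiation of the max-type quantity $\sigma(t)$ (Rademacher plus a selection of the maximizing point, dealing with possible non-uniqueness and with $\Omega=\RR$ where the max might be approached at infinity — one should first argue $f$ and its derivatives decay so the max is attained), and, more importantly, verifying that in $\frac{d}{dt}\sigma$ the contribution of the nonlocal diffusion really does come with an extra factor of $\sigma$ (or is otherwise absorbed) rather than a lone $\|\Lambda f\|_{L^\infty}$ which would not be integrable a priori. Getting that algebraic cancellation right — using that at $x_t$ the bracket has zero first derivative and nonpositive second derivative, and that the commutator $[\Lambda,\text{smooth}]$ applied to $f$ is controlled by $\|f\|_{C^{1+\epsilon}}$-type norms — is the technical heart of the argument. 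The second bullet, by contrast, is essentially a sequence of pointwise ODE computations at $\tilde x$ and should go through cleanly once the analytic-solution framework legitimizes them.
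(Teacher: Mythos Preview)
Your approach for both bullets matches the paper's, and you have correctly identified the key mechanisms: the factorization $\sigma'\propto\sigma$ at the maximizing point $x_t$ for the first bullet, and the pointwise ODE for $\pax^2 f(\tilde x,t)$ for the second. Two clarifications are worth making, because in each case you hedge with ``lower order'' or ``bounded'' remainder terms that in fact do not exist, and whose absence is what makes the argument work.

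For the first bullet, writing $\Sigma(x,t)=\frac{1}{1+l^2-f^2}-\frac{1}{1+(\pax f)^2}$ one computes exactly
\[
\pat\Sigma=\Sigma\left(\frac{2f\,\Lambda f}{(1+l^2-f^2)^2}+\frac{2\pax f\,\Lambda\pax f}{(1+(\pax f)^2)^2}\right)+\pax\Sigma\cdot\frac{2\pax f\,\Lambda f}{(1+(\pax f)^2)^2},
\]
and at $x_t$ the last term vanishes because $\pax\Sigma(x_t)=0$. Hence $\sigma'=A(t)\sigma$ is an \emph{exact homogeneous} ODE, with $|A(t)|\lesssim \|f\|_{L^\infty}\|\Lambda f\|_{L^\infty}+\|\pax f\|_{L^\infty}\|\Lambda\pax f\|_{L^\infty}\lesssim \|f(t)\|_{\dot W^{2+\epsilon,\infty}}$; note that it is the factor $\Lambda\pax f$ (not just $\Lambda f$) that forces the exponent $2+\epsilon$. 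Integrating gives $\sigma(t)=\sigma(0)\exp\!\int_0^t A$, which never vanishes if $\int_0^T|A|<\infty$. Any genuine inhomogeneous remainder in your inequality would spoil this conclusion (Gr\"onwall would only give an upper bound on $\sigma<0$, not prevent it from reaching $0$), so it is essential that there is none.

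For the second bullet, the ODE you obtain for $h(t)=\pax^2 f(\tilde x,t)$ is
\[
h'=2\,\Lambda f(\tilde x,t)\,h\,(l+h),
\]
because $\pax^2\Sigma\big|_{f=l,\,\pax f=0}=2l\,\pax^2 f+2(\pax^2 f)^2$: in addition to the $2lh$ you found from $\pax^2\!\big(\tfrac{1}{1+l^2-f^2}\big)$, there is a $2h^2$ coming from $\pax^2\!\big(\tfrac{-1}{1+(\pax f)^2}\big)\big|_{\pax f=0}=2(\pax^2 f)^2$. There is no inhomogeneous term ``bounded by $c(l,f_0)$''. The ODE is thus quadratic in $h$, not linear plus bounded; nevertheless, since $h\le 0$ (concavity at the maximum) and $\Lambda f(\tilde x,t)\ge 0$, the stated exponential bound follows directly.
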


There are three main questions that remain open for this model: an existence theory for initial data in $H^{1.5}\cap W^{1,\infty}$, a proof of finite time singularities where the curvature blows up and the existence of a geometry (instead of a flat strip) that enhances the similarities between the Muskat problem and the model introduced in this paper.  

\subsection{Statement of the results for \eqref{modelodiego}}
We obtain a new energy balance for \eqref{modelodiego}. To do that, we consider the evolution of the entropy 
$$
\int_\Omega \pax f(t)\log(1+(\pax f(t))^2)dx.
$$
\begin{prop}\label{prop3}
Given $f_0\in W^{1,3}(\Omega)\cap W^{1,1}(\Omega),$ $\Omega=\RR,\TT$, then the solution of \eqref{modelodiego} verifies the following energy balance
\begin{multline}\label{energybalance}
\int_\Omega \pax f(t)\log(1+(\pax f(t))^2)dx\\
+2\int_\Omega \arctan(\pax f(t))-\int_0^t\int_\Omega   \frac{\Lambda \pax f}{1+(\pax f)^2}dxds\\
=\int_\Omega \pax f_0\log(1+(\pax f_0)^2)dx+2\int_\Omega \arctan(\pax f_0).
\end{multline}
\end{prop}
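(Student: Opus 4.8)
The plan is to recognise $\int_\Omega \pax f\log(1+(\pax f)^2)\,dx+2\int_\Omega\arctan(\pax f)\,dx$ as an entropy whose time derivative produces exactly the dissipation term appearing in \eqref{energybalance}. Write $g=\pax f$ and observe that $\Lambda f=\pax Hf=Hg$, so that \eqref{modelodiego} becomes $\pat f=-\frac{Hg}{1+g^2}$ and consequently $g_t=\pax\pat f=\pax\!\left(-\frac{Hg}{1+g^2}\right)$, which is precisely the divergence form \eqref{modelodiegoderiv}. Setting
$$
E(t):=\int_\Omega g\log(1+g^2)\,dx+2\int_\Omega\arctan(g)\,dx,
$$
it suffices to prove $E'(t)=\int_\Omega \frac{\Lambda g}{1+g^2}\,dx$ and then integrate in time from $0$ to $t$.

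First I would differentiate the density. A direct computation gives
$$
\frac{d}{dg}\Big(g\log(1+g^2)+2\arctan g\Big)=\Big(\log(1+g^2)+2-\tfrac{2}{1+g^2}\Big)+\tfrac{2}{1+g^2}=\log(1+g^2)+2,
$$
so the role of the $\arctan$ term is exactly to cancel the $-\frac{2}{1+g^2}$ coming from differentiating $g\log(1+g^2)$. Hence $E'(t)=\int_\Omega\big(\log(1+g^2)+2\big)g_t\,dx$. Inserting $g_t=\pax\!\left(-\frac{Hg}{1+g^2}\right)$ and integrating by parts in $x$ (the boundary term vanishes on $\TT$ and, on $\RR$, by the spatial decay of $g$ and $Hg$), the constant $2$ drops out because its $\pax$ vanishes, and
$$
E'(t)=\int_\Omega \frac{2g\,\pax g}{1+g^2}\cdot\frac{Hg}{1+g^2}\,dx=-\int_\Omega Hg\;\pax\!\left(\frac{1}{1+g^2}\right)dx=\int_\Omega\frac{\pax Hg}{1+g^2}\,dx=\int_\Omega\frac{\Lambda g}{1+g^2}\,dx,
$$
where in the third equality we integrated by parts once more and in the last one we used $\pax Hg=\Lambda g$. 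Rewriting $g=\pax f$ and integrating $E'$ over $[0,t]$ yields \eqref{energybalance} after moving the dissipation integral to the left-hand side.

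The hypothesis $f_0\in W^{1,3}(\Omega)\cap W^{1,1}(\Omega)$ is what guarantees $E(0)<\infty$: for $|g|\le1$ one has $|g\log(1+g^2)|\le |g|^3$, for $|g|\ge1$ one has $|g\log(1+g^2)|\le C|g|^{1+\epsilon}$, and $|\arctan g|\le |g|$, so $g\in L^1\cap L^3$ controls all three integrands (and the same interpolation bounds control $E(t)$ as long as $g(t)\in L^1\cap L^3$). The only genuine obstacle is the rigorous justification of the two integrations by parts and of the differentiation under the integral sign for data of this limited regularity: for the smooth, spatially decaying solutions provided by the well-posedness theory of \eqref{modelodiego} every step above is legitimate, and the general case then follows by a standard approximation argument — mollify the initial datum (or the equation), derive the identity for the regularised flow, and pass to the limit, using the uniform control of the quantities in \eqref{energybalance} to kill the commutator/error terms. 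I expect this limiting step, rather than the algebraic identity, to carry the bulk of the technical work.
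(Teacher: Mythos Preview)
Your proof is correct and follows essentially the same approach as the paper: both compute the time derivative of the entropy $\int g\log(1+g^2)+2\int\arctan g$ for $g=\pax f$, substitute the evolution equation for $g$, and integrate by parts to land on $\int\frac{\Lambda g}{1+g^2}$. The only cosmetic difference is that you first observe $\frac{d}{dg}\big(g\log(1+g^2)+2\arctan g\big)=\log(1+g^2)+2$ and use the compact form $g_t=-\pax\big(\tfrac{Hg}{1+g^2}\big)$, whereas the paper works with the split form $g_t=-\Lambda g+\pax\big(\tfrac{g^2Hg}{1+g^2}\big)$ and then cancels the two resulting $\int\Lambda g\,\log(1+g^2)$ terms; your route is marginally shorter but the content is identical.
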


Furthermore, under a positiveness hypothesis for $\pax f_0$, we can use this energy balance to obtain global existence of weak solutions with rough initial data. This energy balance fully exploits the diffusive character of the equation \eqref{modelodiego}. Notice also that, due to the positiveness of $g$, we can not recover a smooth, periodic $f$ from this $g$. Now we define our notion of weak solution:

\begin{defi} \label{definition 1}
$g(x,t)$ is a global weak solution of \eqref{modelodiegoderiv} if 
$$
g(x,t)\in L^\infty([0,T],L^\infty(\TT))\cap L^2([0,T],\dot{H}^{0.5}(\TT))
$$ 
and \eqref{modelodiegoderiv} holds in the sense of distributions: for any $\psi\in C^{\infty}\left([0,T)\times\mathbb{T}\right)$, periodic in space and with compact support in time, 
\[
\int^{T}_{0}\int_{\mathbb{T}} -\Lambda\psi g+\pat\psi g - \pax\psi\left(\frac{g^2Hg}{1+g^2}\right)dxdt-\int_\TT \psi(x,0)g_0(x)dx=0, 
\]
for every $T<\infty$.
\end{defi} 

We state now our result:
\begin{teo}\label{teoweak}
Let $g_0\in L^\infty(\TT)$ be a positive initial data for equation \eqref{modelodiegoderiv}. Then, there exist at least one global weak solution
$$
g(x,t)\in L^\infty([0,T],L^\infty(\TT))\cap L^2([0,T],\dot{H}^{0.5}(\TT)),
$$ 
satisfying the bounds 
$$
\|g(t)\|_{L^\infty(\TT)}\leq\|g_0\|_{L^\infty(\TT)},\;\min_x g(x,t)>\min_x g_0(x)\;\forall\,t>0,
$$
and
$$
\int_0^t\|g(s)\|_{\dot{H}^{0.5}}^2\leq\frac{(1+\|g_0\|_{L^\infty})^2}{\min_x g_0}\left(\int_\TT g_0\log(1+g_0^2)dx+2\int_\TT \arctan(g_0)\right).
$$
\end{teo}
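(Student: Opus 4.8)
The plan is to construct the weak solution via a parabolic regularization (viscosity approximation) of \eqref{modelodiegoderiv}, derive uniform bounds from the entropy balance of Proposition \ref{prop3} together with an $L^\infty$ maximum principle, and then pass to the limit using compactness. Concretely, I would study the regularized problem
\begin{equation*}
g^\varepsilon_t = -\Lambda g^\varepsilon + \pax\left(\frac{(g^\varepsilon)^2 H g^\varepsilon}{1+(g^\varepsilon)^2}\right) + \varepsilon \pax^2 g^\varepsilon, \qquad g^\varepsilon(x,0)=g_0^\varepsilon(x),
\end{equation*}
where $g_0^\varepsilon$ is a smooth, strictly positive mollification of $g_0$ with $\|g_0^\varepsilon\|_{L^\infty}\le\|g_0\|_{L^\infty}$ and $\min_x g_0^\varepsilon\ge \min_x g_0$. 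For fixed $\varepsilon>0$ this is a semilinear parabolic equation and local well-posedness in, say, $H^2(\TT)$ follows from standard fixed-point arguments; the $\varepsilon$-viscosity makes the estimates easy and the approximate solutions smooth enough to justify all manipulations below.

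Next I would establish the two pointwise bounds that survive the limit. For the $L^\infty$ bound, at a point $x_t$ where $g^\varepsilon(\cdot,t)$ attains its maximum we have $\pax g^\varepsilon(x_t)=0$, $\pax^2 g^\varepsilon(x_t)\le 0$, and $\Lambda g^\varepsilon(x_t)\ge 0$; expanding the divergence term $\pax\big((g^\varepsilon)^2Hg^\varepsilon/(1+(g^\varepsilon)^2)\big)$ and using that the factor multiplying $\pax g^\varepsilon$ vanishes while the remaining piece involves $\pax(Hg^\varepsilon)(x_t)=\Lambda g^\varepsilon(x_t)\ge0$ against the prefactor $(g^\varepsilon)^2/(1+(g^\varepsilon)^2)\le 1$, one checks $\tfrac{d}{dt}\max_x g^\varepsilon\le 0$, giving $\|g^\varepsilon(t)\|_{L^\infty}\le\|g_0^\varepsilon\|_{L^\infty}\le\|g_0\|_{L^\infty}$. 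The same argument at a minimum point, where now $\Lambda g^\varepsilon\le 0$ and $\varepsilon\pax^2 g^\varepsilon\ge 0$, shows the minimum is strictly increasing (for the strict inequality one keeps track of the $\Lambda$ term, which is strictly negative unless $g^\varepsilon$ is constant), yielding $\min_x g^\varepsilon(x,t)>\min_x g_0$ — in particular $g^\varepsilon$ stays uniformly bounded away from zero, so dividing by $g^\varepsilon$ and invoking the entropy is legitimate. These bounds also guarantee the $H^2$ solution is global for each $\varepsilon$.

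Then comes the key energy estimate: the analogue of \eqref{energybalance} for the regularized equation. Multiplying \eqref{modelodiegoderiv}$_\varepsilon$ by $\log(1+(g^\varepsilon)^2) + \text{(lower order)}$ — precisely, testing against the variation of the entropy functional $\int \pax f\log(1+(\pax f)^2)+2\arctan(\pax f)$ written in the $g$ variable — one reproduces the computation behind Proposition \ref{prop3} and obtains
\begin{equation*}
\int_0^t\int_\TT \frac{\Lambda\pax g^\varepsilon \cdot (\text{something coercive})}{1+(g^\varepsilon)^2}\,dx\,ds \le \int_\TT g_0^\varepsilon\log(1+(g_0^\varepsilon)^2)dx+2\int_\TT\arctan(g_0^\varepsilon),
\end{equation*}
the viscous term contributing a favorable sign. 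Combined with $1+(g^\varepsilon)^2\le (1+\|g_0\|_{L^\infty})^2$ and the lower bound on $g^\varepsilon$, this converts into a uniform bound $\int_0^t\|g^\varepsilon(s)\|_{\dot H^{0.5}}^2 ds \le (1+\|g_0\|_{L^\infty})^2(\min_x g_0)^{-1}\big(\int g_0\log(1+g_0^2)+2\int\arctan g_0\big)$ after sending the mollification parameter in $g_0^\varepsilon$ appropriately (so $g_0^\varepsilon\to g_0$ in $L^2$ with the relevant quantities converging). I expect this step — correctly reproducing the entropy identity in the presence of the Hilbert-transform nonlinearity and checking the sign of every term, especially the commutator-type terms arising when integrating by parts against $\log(1+(g^\varepsilon)^2)$ — to be the main technical obstacle.

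Finally I would pass to the limit $\varepsilon\to0$. The uniform bounds give $g^\varepsilon$ bounded in $L^\infty([0,T];L^\infty(\TT))\cap L^2([0,T];\dot H^{0.5}(\TT))$; using the equation one bounds $g^\varepsilon_t$ in a negative-order space (e.g. $L^2([0,T];H^{-s})$ for suitable $s$, since $\Lambda g^\varepsilon$, the nonlinear flux, and $\varepsilon\pax^2 g^\varepsilon$ are all controlled there — the flux is bounded in $L^\infty$ by the maximum principle and its derivative in $H^{-1}$). An Aubin–Lions–Simon argument then yields a subsequence converging strongly in $L^2([0,T]\times\TT)$ and a.e., which suffices to pass to the limit in the weak formulation of Definition \ref{definition 1}: linear terms pass by weak convergence, the nonlinear term $(g^\varepsilon)^2 Hg^\varepsilon/(1+(g^\varepsilon)^2)$ by a.e. convergence plus the uniform $L^\infty$ bound and dominated convergence (noting $Hg^\varepsilon \to Hg$ in $L^2$), and the viscous term vanishes since $\varepsilon\|\pax g^\varepsilon\|_{L^2([0,T]\times\TT)}^2$ is controlled and $\sqrt\varepsilon\pax g^\varepsilon\to 0$. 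The bounds $\|g(t)\|_{L^\infty}\le\|g_0\|_{L^\infty}$, $\min_x g(x,t)>\min_x g_0$, and the $\dot H^{0.5}$-in-time integral all descend to the limit by lower semicontinuity and a.e. convergence, completing the proof.
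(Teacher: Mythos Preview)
Your proposal is correct and follows essentially the same route as the paper: viscous regularization with mollified data, the $L^\infty$ maximum/minimum principle for the approximants, the entropy identity of Proposition~\ref{prop3} (whose $\varepsilon$-correction has the right sign and whose dissipative term, after the symmetrization in the Remark following Proposition~\ref{prop3}, yields the uniform $L^2_t\dot H^{0.5}_x$ bound via the lower bound on $g^\varepsilon$ and the upper bound on $1+(g^\varepsilon)^2$), and then Aubin--Lions with $X_0=H^{0.5}$, $X=L^2$, $Y=H^{-1}$ to pass to the limit. One small slip: the dissipative integrand coming from \eqref{energybalance} involves $\Lambda g^\varepsilon/(1+(g^\varepsilon)^2)$, not $\Lambda\pax g^\varepsilon$; once symmetrized this is exactly what controls $\|g^\varepsilon\|_{\dot H^{0.5}}^2$.
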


\begin{remark}
Notice that if we study the evolution of the energy
$$
\int (1 + \frac{1}{6}(\partial_x f (x, t))^2 )(\partial_x f (x, t))^2 dx,
$$
under \eqref{modelodiego}, we find
$$
\|\pax f\|_{L^2}^2+\frac{1}{6}\|\pax f\|_{L^4}^4+\int_0^t\|\pax f(s)\|^2_{\dot{H}^{0.5}}ds=\|\pax f_0\|_{L^2}^2+\frac{1}{6}\|\pax f_0\|_{L^4}^4.
$$
We thank the anonymous referee for pointing out this energy. This energy balance can be used to extend Theorem \ref{teoweak} to arbitrary (non necessarily positive) $g\in L^2\cap L^4$.
\end{remark}

\subsection{Plan of the paper}

The structure of the paper is as follows: In section \ref{secmodelodiego}, we prove the energy balance \eqref{energybalance} for the solutions of equation \eqref{modelodiego} and we use it to prove global existence of weak solutions of \eqref{modelodiegoderiv}. The results concerning \eqref{model1} are contained from Section \ref{secmodel} to Section \ref{seclarge}. In Section \ref{secmodel} we obtain well-posedness in Sobolev spaces and in an analytical framework for equation \eqref{model1}. In Section \ref{secqual} we study the qualitative properties of the solutions and we get some maximum principles for different lower order norms. In this Section, using the same scheme as in \cite{cheng2012global}, we also prove a global existence and decay in $C^1$ for the mild solution corresponding to small initial data in $H^3$. In Section \ref{seclim} we obtain existence and decay in $H^1$ for the mild solution corresponding to initial data small in $H^2$. In Section \ref{secdec} we present some numerics comparing the solutions to equations \eqref{full} and \eqref{eq0.1}. We present these simulations for the sake of completeness and to bring into comparison with the simulations corresponding to equation \eqref{model1}. In Section \ref{secnum} we present some numerics for equation \eqref{model1}. In particular we compute the evolution of a family of initial data reaching the boundary. In the last Section we study analytically some properties of the solutions when the initial data reaches the boundary. Notice that these solutions exist due to the well-posedness result in the analytical framework.

\section{A new energy balance and global weak solutions for \eqref{modelodiego}}\label{secmodelodiego}
Now we show a new energy balance for the derivative of \eqref{modelodiego}. 
\begin{proof}[Proof of Proposition \ref{prop3}]
We consider the equation for the derivative of the interface evolving in the infinite depth regime \eqref{modelodiegoderiv}. 

Now consider the evolution of the following quantity,
\begin{align*}
\frac{d}{dt}\int_\Omega g\log(1+g^2) &= \int g_t\log(1+g^2) + \int_\Omega\frac{2g^2g_t}{1+g^2}\\
&= \int_\Omega g_t\log(1+g^2) - 2\int_\Omega\frac{g_t}{1+g^2}\\
&= \int_{\Omega} g_t\log(1+g^2) - 2\int_\Omega \partial_t(\arctan(g))
\end{align*}
since $\int_\Omega g_t = 0$.

Let's look at the first term in the right hand side,
\begin{align}
&\int_\Omega g_t\log(1+g^2) = \int_\Omega -\Lambda g\log(1+g^2) + \pax \left(\frac{g^2Hg}{1+g^2}\right)\log(1+g^2)\label{E:aux2}
\end{align}
The second term,
\begin{align*}
&\int_\Omega\pax \left(\frac{g^2Hg}{1+g^2}\right)\log(1+g^2) = \int_\Omega \pax(Hg - \frac{Hg}{1+g^2})\log(1+g^2)\\
& = \int_\Omega \Lambda g\log(1+g^2) - \int_\Omega \pax\left(\frac{Hg}{1+g^2}\right)\log(1+g^2)\\
&= \int_\Omega \Lambda g\log(1+g^2) + \int_\Omega  Hg \frac{2gg_x}{(1+g^2)^2}\\
&=\int_\Omega \Lambda g\log(1+g^2) + \int_\Omega   \frac{\Lambda g}{1+g^2}
\end{align*}
Putting this back together into \eqref{E:aux2},
\begin{equation*}
\frac{d}{dt}\int_\Omega g\log(1+g^2)+2\frac{d}{dt}\int_\Omega \arctan(g) = \int_\Omega   \frac{\Lambda g}{1+g^2}.
\end{equation*}
\end{proof}

\begin{remark}
Now we can symmetrize the \emph{extra} term in \eqref{energybalance},
\begin{equation*}
\int_\TT\frac{\Lambda g}{1+g^2} = -\frac{1}{4\pi}\iint_{\TT\times\TT} \frac{(g(x)-g(y))^2(g(x)+g(y))}{\sin^2(\frac{x-y}{2})(1+(g(x))^2)(1+(g(y))^2)}dxdy,
\end{equation*}
in the periodic case and 
\begin{equation*}
\int_\RR\frac{\Lambda g}{1+g^2} = -\frac{1}{2\pi}\iint_{\RR\times\RR} \frac{(g(x)-g(y))^2(g(x)+g(y))}{(x-y)^2(1+(g(x))^2)(1+(g(y))^2)}dxdy,
\end{equation*}

which is negative if we assume that $g\geq 0$. This observation will allow us to gain half a derivative from this energy identity. 
\end{remark}

We fix $\Omega=\TT$ to simplify and we consider $f_0\in W^{1,\infty}$. Consequently $g_0\in L^\infty$. We also assume $g_0>0$. In particular this implies that 
$$
\|g(t)\|_{L^\infty(\TT)}\leq\|g_0\|_{L^\infty(\TT)},\;\min_x g(x,t)>\min_x g_0(x)\;\forall\,t>0.
$$

We use the previous energy identity to get compactness and to construct weak solutions:

\begin{proof}[Proof of Theorem \ref{teoweak}]
We define the approximate problems
\begin{equation}\label{E:smoothg}
g^\epsilon_t = -\Lambda g^\epsilon + \pax\left(\frac{(g^\epsilon)^2Hg^\epsilon}{1+(g^\epsilon)^2}\right) + \epsilon \pax^2g^\epsilon,\;g^\epsilon(x,0)=\rho_\epsilon*g_0,
\end{equation}
where $\rho_\epsilon$ is a standard mollifier.

Multiplying equation \eqref{E:smoothg} by $-\pax^2g^\epsilon$, and integrating over the torus, we obtain,

\begin{align*}
-\int_\TT g^\epsilon_t\pax^2g^\epsilon &= \int_\TT\Lambda g^\epsilon g^\epsilon_{xx} - \int_\TT \left(\frac{(g^\epsilon)^2Hg^\epsilon}{1+(g^\epsilon)^2}\right)_x g^\epsilon_{xx} - \epsilon\|g^\epsilon\|^2_{\dot{H}^2}\\
\frac{1}{2}\frac{d}{dt}\|g^\epsilon\|^2_{\dot{H}^1} &=\int_\TT\frac{\Lambda g^\epsilon}{1+(g^\epsilon)^2}g^\epsilon_{xx} - \int_\TT \frac{Hg^\epsilon 2g^\epsilon g^\epsilon_x}{(1+(g^\epsilon)^2)^2}g_{xx} -\epsilon\|g_{xx}\|^2_{\dot{H}^2}.
\end{align*}
To estimates the remaining terms, we will use the following inequalities which are a direct consequence of Gagliardo-Nirenberg,
\begin{equation}\label{GN2}
\|u\|_{\dot{W}^{1,4}}^2\leq C\|u\|_{\dot{H}^{2}}\|u\|_{L^\infty}\text{ and }\|u\|_{\dot{W}^{1,3}}^3\leq C\|u\|^2_{\dot{H}^{1.5}}\|u\|_{L^\infty}.
\end{equation}
These inequalities are valid for zero mean, periodic functions, but as the $L^1$ norm of our solution propagates with the evolution, we can adapt the argument straightforwardly. Using this into our estimate,
\begin{align*}
\frac{1}{2}\frac{d}{dt}\|g_x\|^2_0 + \epsilon\|g_{xx}\|^2_0&\leq \|g_{xx}\|_0\|\Lambda g\|_0 + 2\|g_{xx}\|_0\|g_x\|_{L^4}\|Hg\|_{L^4}\|g\|_{L^\infty}\\
&\leq C( \lambda \|g_{xx}\|^2_0 + \frac{1}{\lambda}\|g\|^{10}_{L^\infty}).
\end{align*}
Choosing $\lambda = \epsilon/2C$, we absorb the second derivative into the left side, and integrating in time we obtain,
\begin{equation}
\|g^\epsilon_x(t)\|^2_0 + \epsilon\|g^\epsilon_{xx}\|^2_{L^2_t L^2_x} \leq \|g^\epsilon_x(0)\|^2_0 + \frac{C}{\epsilon}t\|g^\epsilon\|^{10}_{L^\infty_t L^\infty_x}.
\end{equation}
Since the $L^\infty$-norm of $g^\epsilon$ is uniformly bounded, we have a global estimate for the $H^1$ norm of $g^\epsilon$ for every $\epsilon>0$. 

We study the evolution of $\int_\TT g^\epsilon\log(1+(g^\epsilon)^2)$. We find
\begin{multline*}
\int_\TT g^\epsilon(t)\log(1+(g^\epsilon(t))^2)dx+2\int_\TT \arctan(g^\epsilon)\\
-\int_0^t\int_\TT   \frac{\Lambda g^\epsilon}{1+(g^\epsilon)^2}dxds+2\epsilon\int_0^t\int_\TT\frac{g^\epsilon(\pax g^\epsilon)^2}{1+(g^\epsilon)^2}dxdt\\
=\int_\TT g^\epsilon_0\log(1+(g^\epsilon_0)^2)dx+2\int_\TT \arctan(g^\epsilon_0).
\end{multline*}
This implies the uniform-in-$\epsilon$ bound
$$
\int_0^t\|g^\epsilon(s)\|_{\dot{H}^{0.5}}^2\leq\frac{(1+\|g_0\|_{L^\infty})^2}{\min_x g_0}\left(\int_\TT g_0\log(1+g_0^2)dx+2\int_\TT \arctan(g_0)\right).
$$
Banach-Alaoglu Theorem implies $g\in L^\infty([0,T],L^\infty)\cap L^2([0,T],H^{0.5})$. Using \eqref{modelodiegoderiv} we get a uniform bound for $\pat g^\epsilon$ in $L^2_tH^{-1}_x$. Thus, we can apply Lemma \ref{lemma:2.1} with
$$
X_0=H^{0.5},\;X=L^2,\;Y=H^{-1},
$$
to get strong convergence $g^\epsilon\rightarrow g$ in $L^2([0,T]\times\TT)$. This compactness implies the convergence of the weak formulations. 
\end{proof}

\section{Well-posedness}\label{secmodel}
\subsection{Well-posedness in Sobolev spaces}
First, we prove local well-posedness in the stable regime:
\begin{proof}[Proof of Theorem \ref{teo1}]
We proof the case $s=3, l=\pi/2$ being the other cases analogous. We define the energy
\begin{equation}\label{eqenergy}
E(t)=\|f(t)\|_{H^3}+\|d[f]\|_{L^\infty}+\|D[f]\|_{L^\infty},
\end{equation}
where
\begin{equation}\label{eqdistance}
d[f]=\frac{1}{\left(\frac{\pi}{2}\right)^2-\left(f(x)\right)^2-(\pax f(x))^2},
\end{equation}
and
\begin{equation}\label{eqDistance}
D[f]=\frac{1}{\left(\frac{\pi}{2}\right)^2-\left(f(x)\right)^2}.
\end{equation}
The quantity $d[f]$ controls the stability condition \eqref{eqRayleigh} for our model. Indeed, if initially $d[f_0]>0$, then, as long as the energy remains bounded, $d[f]>0$. This implies that the dynamics is in the stable regime \eqref{eqRayleigh}. The quantity $D[f]$ ensures that we don't leave the set $H^s_l$.

\textbf{Estimates for $\pat f,\pat\pax f$:} By the basic properties of the Hilbert transform and the Sobolev embedding, we get
\begin{equation}\label{patf}
\|\pat f\|_{L^\infty}\leq 2\|H\pax f\|_{L^\infty}\leq C\|f\|_{H^{3}},
\end{equation}
and
\begin{multline}\label{patpaxf}
\|\pat \pax f\|_{L^\infty}\leq 2\|H\pax^2 f\|_{L^\infty}\\
+C\left(\|\pax f\|_{L^\infty}\|\pax^2 f\|_{L^\infty}+\|f\|_{L^\infty}\|\pax f\|_{L^\infty}\right)\|\Lambda f\|_{L^\infty}\\
\leq C\|f\|_{H^3}(\|f\|_{H^3}^2+1).
\end{multline}

\textbf{Estimates for $d[f]$:} We compute 
$$
\frac{d}{dt}d[f]=(d[f])^2\left(2f(x)\pat f(x)+2\pax f(x)\pat\pax f(x)\right).
$$
Using the definition of the energy \eqref{eqenergy}, we get
$$
\frac{d}{dt}d[f]\leq Cd[f]E\left(E+1\right)^4,
$$
thus, integrating in $(t,t+h)$
$$
d[f](t+h)\leq d[f](t)e^{C\int_t^{t+h} E\left(E+1\right)^4}ds.
$$
We have
\begin{multline}\label{patdf}
\frac{d}{dt}\|d[f]\|_{L^\infty}=\lim_{h\rightarrow0}\frac{\|d[f](t+h)\|_{L^\infty}-\|d[f](t)\|_{L^\infty}}{h}\\
\leq \|d[f](t)\|_{L^\infty}\lim_{h\rightarrow0}\frac{e^{C\int_t^{t+h} E\left(E+1\right)^4}ds-1}{h}\leq CE^2\left(E+1\right)^4.
\end{multline}

\textbf{Estimates for $D[f]$:}
In the same way,
\begin{equation}\label{patDf}
\frac{d}{dt}\|D[f]\|_{L^\infty}\leq CE^2\left(E+1\right)^4.
\end{equation}

\textbf{Estimates for the higher order terms:}
The higher order terms are
$$
I_1=\int_\Omega \left(\frac{1}{1+\left(\frac{\pi}{2}\right)^2-\left(f(x)\right)^2}-\frac{1}{1+\left(\pax f(x)\right)^2}\right)\Lambda \pax^3 f(x)\pax^3f(x)dx,
$$
$$
I_2=\int_\Omega \frac{2\pax f(x)\pax^4 f(x)}{\left(1+\left(\pax f(x)\right)^2\right)^2}\Lambda f(x)\pax^3f(x)dx.
$$

Notice that, due to \eqref{patdf}, $\sigma(t)<0$ for sufficiently small time. To estimate $I_1$ we use the pointwise inequality \cite{cordoba2003pointwise, cor2}
$$
2\theta\Lambda\theta\geq \Lambda\theta^2.
$$
This inequality and the self-adjointness of the operator allow us \emph{to integrate by parts} in the stable regime (which is guaranteed for a short time by \eqref{patdf}). We get
$$
I_1=J_1+J_2,
$$
with
\begin{eqnarray}\label{eqhigher}
J_1&\leq& \frac{1}{2}\int_\Omega \Lambda\left(\frac{1}{1+\left(\frac{\pi}{2}\right)^2-\left(f(x)\right)^2}-\frac{1}{1+\left(\pax f(x)\right)^2}-\sigma(t)\right)\nonumber\\
&&\times (\pax^3f(x))^2dx\nonumber\\
&\leq& C\|f\|^2_{H^3}\left\|\Lambda \left[\frac{1}{1+\left(\frac{\pi}{2}\right)^2-\left(f(x)\right)^2}-\frac{1}{1+\left(\pax f(x)\right)^2}\right]\right\|_{H^{0.6}}\nonumber\\
&\leq& \left(\left\|\frac{2f\pax f}{(1+\left(\frac{\pi}{2}\right)^2-\left(f(x)\right)^2)}\right\|_{H^{1}}+\left\|\frac{2\pax f \pax^2f}{(1+\left(\pax f(x)\right)^2)}\right\|_{H^{1}}\right)\nonumber\\
&&\times C\|f\|^2_{H^3},
\end{eqnarray}
and
\begin{equation}\label{eqhigher2}
J_2= \sigma(t)\int_\Omega (\Lambda^{0.5}\pax^3f(x))^2dx<0.
\end{equation}
The term $I_2$ can be bounded as in \cite{c-g-o08}. With the same ideas, we can handle the lower order terms. We conclude 
$$
\frac{d}{dt}\|f(t)\|_{H^3}\leq C(E+1)^7.
$$

\textbf{Obtaining uniform estimates:} Collecting the estimates (see \eqref{patdf}, \eqref{patDf}, \eqref{eqhigher}, \eqref{eqhigher2}), we get
$$
\frac{d}{dt}E\leq C(E+1)^7.
$$
Using Gronwall's inequality, we obtain 
$$
E(t)\leq C(f_0),\text{ if }0\leq t \leq T(f_0).
$$
With this a priori estimate we can obtain the local existence of smooth solutions using the standard arguments (see \cite{bertozzi-Majda}). Moreover, \eqref{eqhigher} and \eqref{eqhigher2} give us 
$$
\frac{d}{dt}\|f(t)\|^2_{H^3}-\sigma(t)\|f(t)\|^2_{\dot{H}^{3.5}}\leq C(E+1)^k.
$$
Integrating in time, we conclude $f\in L^2_tH^{3.5}_x$.

\textbf{Uniqueness:} Let's assume that there exists $f_1,$ $f_2$, two different solutions corresponding to the same initial data and denote $f=f_1-f_2$. Then, with the same ideas, we get
$$
\frac{d}{dt}\|f\|_{L^2}^2\leq C(\|f_1\|_{H^3},\|f_2\|_{H^3})\|f\|_{L^2}^2,
$$
and using Gronwall inequality, we conclude the uniqueness.

\textbf{Continuation criterion:} We use Lemma \ref{lemaaux} ($\alpha=1,d=1,\gamma=0.6$) in \eqref{eqhigher} to get
\begin{equation}\label{eqbound}
\Lambda\left[\frac{1}{1+l^2-(f(x))^2}\right]\leq C(l)\|f\|_{W^{0.6,\infty}}^2+\frac{2f(x)}{(1+l^2-(f(x))^2)^2}\Lambda f(x),
\end{equation}
\begin{equation}\label{eqbound2}
\Lambda\left[\frac{-1}{1+(\pax f(x))^2}\right]\leq C\|\pax f\|_{W^{0.6,\infty}}^2+\frac{2\pax f(x)}{(1+(\pax f(x))^2)^2}\Lambda \pax f(x).
\end{equation}
From here we conclude the result.

\end{proof}

\subsection{Well-posedness in the analytical framework}
We start with a useful Lemma:
\begin{lem}\label{CKprop}
Consider $0\leq r'<r$ and the set $\mathcal{O}^\tau_R$. Then, for $f,g\in \mathcal{O}^\tau_R$, the spatial operator in \eqref{eqmodelIA}, $F:\mathcal{O}^\tau_R\rightarrow H^3(\Ss_{r'})$ is continuous. Moreover, the following inequalities holds:
\begin{enumerate}
 \item $\|F[f]\|_{H^3(\Ss_{r'})}\leq\frac{C^\tau_R}{r-r'}\|f\|_{H^3(\Ss_r)},$
 \item $\|F[f]-F[g]\|_{H^3(\Ss_{r'})}\leq\frac{C^\tau_R}{r-r'}\|f-g\|_{H^3(\Ss_r)}.$
\end{enumerate}
\end{lem}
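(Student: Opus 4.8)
The plan is to prove Lemma \ref{CKprop} by exhibiting $F[f]$ as built from elementary operations — addition, multiplication, composition with smooth functions, and the nonlocal operator $\Lambda$ — on the Banach scale of Hardy-Sobolev spaces $H^3(\Ss_r)$, and then verifying that each such operation is bounded on the scale with the claimed loss of $1/(r-r')$ coming only from one application of $\pax$ (hidden inside $\Lambda$). Concretely, write
$$
F[f](\gamma)=\left(-d_2^0[f](\gamma)+d_1^0[f](\gamma)\right)\Lambda f(\gamma),\qquad d_2^0[f]=\frac{1}{1+(\pax f)^2},\ d_1^0[f]=\frac{1}{1+l^2-f^2},
$$
so that the estimate will follow once we control (i) multiplication in $H^3(\Ss_{r'})$, (ii) the boundedness of the substitution operators $f\mapsto d_1^0[f]$, $f\mapsto d_2^0[f]$ on $\mathcal{O}^\tau_R$, and (iii) the gain/loss structure of $\Lambda:H^3(\Ss_r)\to H^2(\Ss_{r'})$.

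First I would record the algebra structure: since $s=3>1/2$, $H^3(\Omega)$ is a Banach algebra, hence so is $H^3(\Ss_r)$ (its norm is just the sum of the two boundary norms $f(x\pm ir)$). Therefore products of finitely many $H^3(\Ss_r)$ functions are controlled, with constants depending only on $R$ through $\|f\|_{H^3(\Ss_r)}<R$. Next, the denominators: on $\mathcal{O}^\tau_R$ we have the pointwise lower bounds $1+(\pax f(\gamma))^2\geq 1-(\im\pax f(\gamma))^2\geq 1-\tau>0$ — here one uses $|1+z^2|\geq 1-(\im z)^2$ for $z=\pax f(\gamma)$ — and similarly $|1+l^2-f(\gamma)^2|\geq 1+l^2-(\re f(\gamma))^2\geq 1-\tau>0$ (using that $(\re f)^2<l^2+\tau$, so $1+l^2-(\re f)^2>1-\tau$, together with $|1+l^2-f^2|\ge \re(1+l^2-f^2)=1+l^2-(\re f)^2+(\im f)^2$). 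Wait — more carefully: $\re(1+l^2-f^2)=1+l^2-(\re f)^2+(\im f)^2\ge 1+l^2-(\re f)^2>1-\tau>0$. Since the reciprocal is holomorphic where the denominator avoids zero and $d_1[f],d_2[f]<R$ bound it from above, $f\mapsto 1/(1+l^2-f^2)$ and $f\mapsto 1/(1+(\pax f)^2)$ map $\mathcal{O}^\tau_R$ into $H^3(\Ss_r)$ with norm bounded by $C^\tau_R$; this is the standard "smooth function of an $H^s$ function, $s>1/2$, stays in $H^s$" lemma (Moser-type / composition estimate), transported to the complex strip by applying it on each boundary line $\{\gamma=x\pm ir\}$ and using that these restrictions lie in $H^3(\Omega)$.

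Then I would assemble the operator estimate (1). Apply the Cauchy-type inequality \eqref{cauchy2} to pass from $H^3(\Ss_r)$ to a space with one derivative more at radius $r'$: since $\Lambda f(\gamma)=\pax Hf(\gamma)$ and $H$ is bounded on the relevant spaces (it acts as a Fourier multiplier in the real variable $x$, so it preserves $H^3(\Omega)$ on each line and hence $H^3(\Ss_r)$), we get $\|\Lambda f\|_{H^3(\Ss_{r'})}=\|\pax Hf\|_{H^3(\Ss_{r'})}\leq \frac{C}{r-r'}\|Hf\|_{H^3(\Ss_r)}\leq \frac{C}{r-r'}\|f\|_{H^3(\Ss_r)}$ — actually one should be slightly careful: I want $\|\Lambda f\|_{H^3(\Ss_{r'})}$, and \eqref{cauchy2} is stated for $L^2$; the $H^3$ version follows by applying it to $\pax^j$, $j\le 3$, of $Hf$, or by noting $\|\pax u\|_{H^3(\Ss_{r'})}\le \|\pax u\|_{L^2(\Ss_{r'})}+\|\pax\Lambda^3 u\|_{L^2(\Ss_{r'})}\le \frac{C}{r-r'}\|u\|_{H^3(\Ss_r)}$. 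Combining the algebra estimate with the denominator bounds gives
$$
\|F[f]\|_{H^3(\Ss_{r'})}\le \left(\|d_1^0[f]\|_{H^3(\Ss_r)}+\|d_2^0[f]\|_{H^3(\Ss_r)}\right)\|\Lambda f\|_{H^3(\Ss_{r'})}\le \frac{C^\tau_R}{r-r'}\|f\|_{H^3(\Ss_r)},
$$
which is (1). For the Lipschitz estimate (2), write the difference $F[f]-F[g]$ telescopically: $F[f]-F[g]=(d_1^0[f]-d_1^0[g]-d_2^0[f]+d_2^0[g])\Lambda f+(d_1^0[g]-d_2^0[g])\Lambda(f-g)$. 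The second term is handled exactly as in (1), producing $\frac{C^\tau_R}{r-r'}\|f-g\|_{H^3(\Ss_r)}$. For the first, I would use that the nonlinearities $d_1^0,d_2^0$ are Lipschitz on $\mathcal{O}^\tau_R$ as maps $H^3(\Ss_r)\to H^3(\Ss_r)$: $d_1^0[f]-d_1^0[g]=\frac{(f-g)(f+g)}{(1+l^2-f^2)(1+l^2-g^2)}$ and $d_2^0[f]-d_2^0[g]=\frac{-(\pax f-\pax g)(\pax f+\pax g)}{(1+(\pax f)^2)(1+(\pax g)^2)}$, both of which are products of $H^3(\Ss_r)$ functions (the denominators controlled by the uniform lower bounds, the numerators linear in $f-g$ resp. $\pax(f-g)$) — note $\|\pax(f-g)\|_{H^3(\Ss_r)}$ need not be controlled by $\|f-g\|_{H^3(\Ss_r)}$ without paying a radius, so here I would instead estimate the $d_2^0$ difference in $H^3(\Ss_{r''})$ for an intermediate $r'<r''<r$ at the cost of $\frac{C}{r-r''}$, then apply $\Lambda$ going from $r''$ to $r'$ at cost $\frac{C}{r''-r'}$, and finally take $r''=(r+r')/2$ so that both factors are $\le \frac{C}{r-r'}$; alternatively one bounds $\Lambda f$ in $H^4(\Ss_{r''})$. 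Either bookkeeping yields $\|F[f]-F[g]\|_{H^3(\Ss_{r'})}\le \frac{C^\tau_R}{r-r'}\|f-g\|_{H^3(\Ss_r)}$.

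Finally, continuity of $F:\mathcal{O}^\tau_R\to H^3(\Ss_{r'})$ is immediate from the Lipschitz estimate (2) (with $r,r'$ fixed). The main obstacle — really the only delicate point — is the bookkeeping of the radius losses so that every term ends up with a single factor $1/(r-r')$ rather than $1/(r-r')^2$: the difficulty is that the "bad" object $\Lambda f$ already costs one derivative, while the Lipschitz bound for $d_2^0[\cdot]$ in terms of $\|f-g\|_{H^3}$ (not $\|\pax(f-g)\|_{H^3}$) costs another; the fix is to split the radius gap in two, as indicated above, using an intermediate radius and applying \eqref{cauchy2} twice. The composition/Moser estimate for $f\mapsto 1/(1+l^2-f^2)$ on $H^3(\Ss_r)$ is standard (analyticity of the outer function on a neighborhood of the range, which is guaranteed by the uniform bounds $d_1[f],d_2[f]<R$ valid on $\mathcal{O}^\tau_R$), and the Banach-algebra property of $H^3$ does the rest.
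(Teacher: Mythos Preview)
There is a genuine gap in part (1). You assert that $d_2^0[f]=1/(1+(\pax f)^2)$ lies in $H^3(\Ss_r)$ with a bound $C_R^\tau$ (no radius loss), invoking the Moser composition lemma. But the input to the smooth outer function is $\pax f$, which is only in $H^2(\Ss_r)$; the composition lemma therefore yields $d_2^0[f]\in H^2(\Ss_r)$, not $H^3$. Concretely, $\pax^3 d_2^0[f]$ contains the term $\phi'(\pax f)\,\pax^4 f$, which is not in $L^2(\Ss_r)$ for $f\in H^3(\Ss_r)$. Hence your displayed inequality $\|F[f]\|_{H^3(\Ss_{r'})}\le(\|d_1^0[f]\|_{H^3(\Ss_r)}+\|d_2^0[f]\|_{H^3(\Ss_r)})\|\Lambda f\|_{H^3(\Ss_{r'})}$ is not available. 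If instead you pay a radius to get $\|d_2^0[f]\|_{H^3(\Ss_{r'})}\le \frac{C}{r-r'}$, the algebra product with $\|\Lambda f\|_{H^3(\Ss_{r'})}\le\frac{C}{r-r'}$ gives $(r-r')^{-2}$, not $(r-r')^{-1}$.

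The same defect appears in part (2): you notice it for the Lipschitz difference, but your intermediate-radius trick does not cure it. With $r''=(r+r')/2$ you get $\|d_2^0[f]-d_2^0[g]\|_{H^3(\Ss_{r''})}\le \frac{C}{r-r''}\|f-g\|_{H^3(\Ss_r)}$ and $\|\Lambda f\|_{H^3(\Ss_{r'})}\le\frac{C}{r''-r'}$; multiplying these through the algebra inequality yields $\frac{C}{(r-r')^2}\|f-g\|_{H^3(\Ss_r)}$, not the first-order loss required by Nirenberg--Nishida. The remedy is to abandon the naive algebra bound and use a \emph{tame} product estimate $\|uv\|_{H^3}\le C(\|u\|_{L^\infty}\|v\|_{H^3}+\|u\|_{H^3}\|v\|_{L^\infty})$ (equivalently, expand $\pax^3 F[f]$ by Leibniz and the chain rule, as the paper does). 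Then in every term exactly one factor carries the top four derivatives of $f$ and costs a single Cauchy estimate \eqref{cauchy2}, while the companion factor sits in $L^\infty(\Ss_{r'})$ with bound $C_R^\tau$ and no radius loss; this produces the correct $\frac{C_R^\tau}{r-r'}$ in both (1) and (2).
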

\begin{proof}
For the sake of brevity, we only prove the first part. The second one is analogous. Notice that
$$\|\Lambda f\|_{L^2(\Ss_r')}\leq\frac{C}{r-r'}\|Hf\|_{L^2(\Ss_r)}\leq\frac{C}{r-r'}\|f\|_{L^2(\Ss_r)}$$
By definition:
$$
\|F[f]\|^2_{H^3(\Ss_r')}=I+II
$$
where
$$
I=\left\|\left(\frac{-1}{1+(\partial_x f)^2}+\frac{1}{1+l^2-f^2} \right)\Lambda f\right\|_{L^2(\Ss_r')}^2
$$
$$
II=\left\|\pax^3\left(\left(\frac{-1}{1+(\partial_x f)^2}+\frac{1}{1+l^2-f^2} \right)\Lambda f\right)\right\|_{L^2(\Ss_r')}^2
$$
To estimate $I$, we use H\"older and the fact that we're working in the open set $\mathcal{O}^\tau_R$ to get
$$
1-\tau\leq|1+(\partial_x f)^2|^2,
$$
and, as a consequence,
$$
\left|\frac{1}{1+(\partial_x f)^2}\right|
\leq \frac{1}{\sqrt{1-\tau}}.
$$
A similar bound holds for the term $\frac{1}{1+l^2-f^2}$. Hence, we have:
$$
\left\|\left(\frac{-1}{1+(\partial_x f)^2}+\frac{1}{1+l^2-f^2} \right)\Lambda f\right\|_{L^2(\Ss_r')}\leq\frac{C^\tau_R}{r-r'}\|f\|_{L^2(\Ss_r)}
$$
In $II$ we compute the third derivative. The terms involving $1$,$2$ and $3$ derivatives can be bounded using the previous ideas, the open set definition and the Banach scale property. For the terms involving $4$ derivatives we use \eqref{cauchy2}. In particular
\begin{multline*}
\|(1+(\partial_x f)^2)^{-2}\partial_x f\partial_x^4 f\|_{L^2(\Ss_r')}\\
\leq\|(1+(\partial_x f)^2)^{-2}\partial_x f\|_{L^\infty(\Ss_r')}\|\partial_x(\partial_x^3 f)\|_{L^2(\Ss_r')}\leq \frac{C^\tau_R}{r-r'}\|f\|_{H^3(\Ss_r)},
\end{multline*}
\begin{multline*}
\left\|\left(\frac{-1}{1+(\partial_x f)^2}+\frac{1}{1+l^2-f^2} \right)\partial_x^3\Lambda f\right\|_{L^2(\Ss_r')}\\
\leq\left\|\left(\frac{-1}{1+(\partial_x f)^2}+\frac{1}{1+l^2-f^2} \right)\right\|_{L^\infty(\Ss_r')}\|\partial_x^3(\Lambda f)\|_{L^2(\Ss_r')}^2\\
\leq\frac{C^\tau_R}{r-r'}\|f\|_{H^3(\Ss_r)}.
\end{multline*}
This concludes the proof.
\end{proof}

The former Lemma is used in the proof of Theorem \ref{teoCK}

\begin{proof}[Proof of Theorem \ref{teoCK}]
The proof follows the ideas in \cite{nirenberg1972abstract, nishida1977note} (see also \cite{ccfgl, CGO, marchioro1994mathematical}. We fix $R>R_0>0$ and $1>\tau>\tau_0$ and we consider the following Picard's iteration scheme
$$
f^n(t)=f_0+\int_0^tF[f^{n-1}]ds.
$$
By induction hypothesis we have $f^{j}\in \mathcal{O}^\tau_R$ for $1<j<n$. Using Lemma \ref{CKprop} and the ideas in \cite{nirenberg1972abstract, nishida1977note} we can find $T_0>0$ such that $\|f^n\|_{H^3(\Ss_r)}<R$, consequently, we need to find $T_1,T_2$ such that
$$
0<d_1[f^n]<R,\;0<d_2[f^n]<R.
$$
We obtain $T_2$ for $d_2$, being $T_1$ similar. We have 
$$
\tau-\im\pax f^n\geq\tau-\im\pax f^0-tC^\tau_R=\tau-\tau_0+\tau_0-\im\pax f^0-tC^\tau_R\geq \frac{1}{R_0}-tC^\tau_R>\frac{1}{R},
$$
by taking $0\leq t\leq T_2=T_2(R,\tau)$ small enough. We define $T=\min\{T_0,T_1,T_2\}>0$ and we conclude.
\end{proof}

\section{Qualitative theory}\label{secqual}

\subsection{Decay estimates for the lower norms}

\begin{proof}[Proof of Proposition \ref{prop1}] We assume $l=\pi/2$ without losing generality.

\textbf{Step 1:} The proof of this part is straightforward.

\textbf{Step 2:} We denote $M(t)=\max_x f(x,t)=f(X_t,t)$ and $m(t)=\min_x f(x,t)=f(x_t,t)$. Using Rademacher Theorem as in \cite{cor2} and \cite{c-g09}, we obtain
$$
\frac{d}{dt}f(X_t)=-\frac{\left(\frac{\pi}{2}\right)^2-\left(f(X_t)\right)^2}{1+\left(\frac{\pi}{2}\right)^2-\left(f(X_t)\right)^2}\Lambda f(X_t).
$$ 
Using the kernel representations
$$
\Lambda u(x)=\frac{1}{\pi}\text{P.V.}\int_\RR\frac{u(x)-u(x-\eta)}{\eta^2}d\eta,\text{ }
\frac{1}{2\pi}\text{P.V.}\int_\TT\frac{u(x)-u(x-\eta)}{\sin^2\left(\frac{\eta}{2}\right)}d\eta,
$$
for the flat at infinity case and for the periodic case, respectively. We conclude $M'(t)<0$. With the same approach we get $m'(t)>0$.

\textbf{Step 3:} The solution remains odd, so, as in \cite{AGM}, we have
\begin{multline*}
\Lambda f(X_t)=\frac{1}{2\pi}\text{P.V.}\int_\TT\frac{f(X_t)-f(X_t-\eta)}{\sin^2(\eta/2)}d\eta\\
\geq\frac{1}{2\pi}\text{P.V.}\int_\TT f(X_t)-f(X_t-\eta)d\eta=f(X_t).
\end{multline*}
Thus, we have
$$
\frac{\left(\frac{\pi}{2}\right)^2-\left(f(X_t)\right)^2}{1+\left(\frac{\pi}{2}\right)^2-\left(f(X_t)\right)^2}\Lambda f(X_t)\geq \frac{\left(\frac{\pi}{2}\right)^2-\left(f(X_t)\right)^2}{1+\left(\frac{\pi}{2}\right)^2}f(X_t)\geq \frac{\left(\frac{\pi}{2}-f(X_t)\right)}{1+\left(\frac{\pi}{2}\right)^2}f(X_t).
$$
We conclude
$$
\frac{d}{dt}f(X_t)\leq -\frac{\left(\frac{\pi}{2}-f(X_t)\right)}{1+\left(\frac{\pi}{2}\right)^2}f(X_t),
$$ 
therefore
$$
\|f(t)\|_{L^\infty(\TT)}\leq \frac{\pi \|f_0\|_{L^\infty(\TT)}}{\pi e^{\frac{\pi}{2}\frac{t}{1+\left(\frac{\pi}{2}\right)^2}}+2\|f_0\|_{L^\infty(\TT)}\left(1-e^{\frac{\pi}{2}\frac{t}{1+\left(\frac{\pi}{2}\right)^2}}\right)}.
$$ 

\textbf{Step 4:} We test the equation \eqref{model1} against $\Lambda f$, integrate in space and use the self-adjointness. Recalling the definition of $\sigma(t)$ \eqref{sigma}, we have
$$
\|f(t)\|^2_{\dot{H}^{0.5}}-2\int_0^t \sigma(s)\|f(s)\|^2_{\dot{H}^{1}}\leq \|f_0\|^2_{\dot{H}^{0.5}}.
$$
Assume again that the solution doesn't leave the stable regime up to time $T$, then testing equation \eqref{model1} against $f$, we get
$$
\frac{d}{dt}\|f\|_{L^2}^2\leq 4\|\Lambda f\|_{L^2}\|f\|_{L^2}, 
$$
and integrating in time,
\begin{multline*}
\|f(t)\|_{L^2}\leq \|f_0\|_{L^2}+ \frac{4}{\min_{0\leq s\leq T}\sqrt{|\sigma(s)|}}\int_0^t\sqrt{-\sigma(s)}\|f(s)\|_{\dot{H}^1}ds\\
\leq \|f_0\|_{L^2}+ \frac{2\sqrt{T}\|f_0\|_{\dot{H}^{0.5}}}{\min_{0\leq s\leq T}\sqrt{|\sigma(s)|}}.
\end{multline*}

\textbf{Step 5:} Using Poincar\'e inequality and recalling that fractional derivatives have zero mean, we get
$$
\frac{1}{2}\frac{d}{dt}\|f(t)\|^2_{\dot{H}^{0.5}(\TT)}\leq \sigma(t)\|f(t)\|^2_{\dot{H}^{0.5}(\TT)}.
$$
Using Gronwall inequality we conclude the result.

\textbf{Step 6:} Taking $T$ such that the solution is in the stable regime and using a previous step, we have
$$
\|f(t)\|_{L^2}\leq \mathfrak{C}(f_0,T).
$$
Using Rademacher Theorem and the decay of the amplitude, we obtain  
\begin{eqnarray*}
\frac{d}{dt}\|f(t)\|_{L^\infty(\RR)}&=&-\left(\frac{\left(\frac{\pi}{2}\right)^2-\|f(t)\|_{L^\infty(\RR)}^2}{1+\left(\frac{\pi}{2}\right)^2-\|f(t)\|_{L^\infty(\RR)}^2}\right)\Lambda f(x_t)\\
&\leq& -\left(\frac{\frac{\pi}{2}-\|f(t)\|_{L^\infty(\RR)}}{1+\left(\frac{\pi}{2}\right)^2}\right)\frac{\|f(t)\|_{L^\infty(\RR)}^{3}}{\mathfrak{C}(f_0,T)}\\
&\leq& -\left(\frac{\frac{\frac{\pi}{2}}{\|f(t)\|_{L^\infty(\RR)}}-1}{1+\left(\frac{\pi}{2}\right)^2}\right)\frac{\|f(t)\|_{L^\infty(\RR)}^{4}}{\mathfrak{C}(f_0,T)}\\
&\leq& -\left(\frac{\frac{1-\mathcal{A}}{\mathcal{A}}}{1+\left(\frac{\pi}{2}\right)^2}\right)\frac{\|f(t)\|_{L^\infty(\RR)}^{4}}{\mathfrak{C}(f_0,T)},
\end{eqnarray*}
thus, for $0\leq t\leq T$ we conclude
$$
\|f(t)\|_{L^\infty(\RR)}\leq \frac{1}{\sqrt[3]{\left(\frac{\frac{1-\mathcal{A}}{\mathcal{A}}}{1+\left(\frac{\pi}{2}\right)^2}\right)\frac{3t}{\mathfrak{C}(f_0,T)}+\frac{1}{\|f_0\|^3_{L^\infty(\RR)}}}}
$$
\end{proof}

\subsection{Global existence and decay estimates in $C^1$}
\begin{proof}[Proof of Theorem \ref{teo3}]
In this case we have $X=H^3$, $Y=C^1$ and $\mathcal{D}(t)=e^{\mathcal{C}_lt}$. We use the estimate
\begin{equation}\label{normlinearoperator}
\|e^{-t\mathcal{C}_l\Lambda}\|_{L^\infty\rightarrow L^\infty}\leq e^{-\mathcal{C}_lt},
\end{equation}
and get
\begin{equation}\label{boundduhamel}
\|f(t)\|_{Y}\leq\|f_0\|_{Y}e^{-\mathcal{C}_lt}+\int_0^te^{-\mathcal{C}_l(t-s)}\|NL(s)\|_{Y}ds.
\end{equation}
Using Sobolev embedding, we have
\begin{equation*}
\|NL(s)\|_{L^\infty}\leq C(l)\|f\|_{H^{1.6}}\left(\|\pax f\|^2_{L^\infty}+\|f(t)\|_{L^\infty}^2\right)\leq
C(l)\frac{\vertiii{f}_T^3}{(\mathcal{D}(t))^{2}}.
\end{equation*}
and
\begin{multline*}
\|\pax NL(s)\|_{L^\infty}\leq C(l)\left[\|f\|_{H^{2.6}}\left(\|\pax f\|^2_{L^\infty}+\|f(t)\|_{L^\infty}^2\right)\right.\\
+\|f\|_{H^{1.6}}\left(\|\pax f\|_{L^\infty}\|\pax^2 f\|_{L^\infty}\left(\|\pax f\|^2_{L^\infty}+\|f(t)\|_{L^\infty}^2\right)\right.\\
+\|\pax f\|_{L^\infty}\|f\|_{L^\infty}\left(\|\pax f\|^2_{L^\infty}+\|f(t)\|_{L^\infty}^2\right)\\
\left.\left.+\|\pax f\|_{L^\infty}\|\pax^2 f\|_{L^\infty}+\|\pax f\|_{L^\infty}\|f\|_{L^\infty}\right)\right].
\end{multline*}
Recalling the following inequalities
\begin{equation}\label{GN}
\|u\|_{H^{s}}\leq C\|u\|^{1-s/r}_{L^2}\|u\|^{s/r}_{H^r}\text{ and }\|u\|_{L^\infty}\leq C\|\pax u\|^{0.5}_{L^2}\|u\|^{0.5}_{L^2},
\end{equation}
and using $\|\pax f\|_{L^2}\leq C\|\pax f\|_{L^\infty}$, we get
$$
\|f\|_{H^{1.6}}\leq C\left(\|f\|_{L^\infty}+\|\pax f\|^{0.7}_{L^2}\|\pax f\|^{0.3}_{H^2}\right)\leq C\vertiii{f}_Te^{-\mathcal{C}_lt0.7},
$$
$$
\|\pax^2 f\|_{L^\infty}\leq C\|\pax f\|^{0.5}_{L^2}\|\pax^3 f\|^{0.5}_{L^2}\leq C\vertiii{f}_Te^{-\mathcal{C}_lt0.5}.
$$
Putting all the estimates together, we conclude the following estimate
\begin{equation*}
\|NL(s)\|_{C^1}\leq C(l)\left(\vertiii{f}_T^3+\vertiii{f}_T^5\right)e^{-2\mathcal{C}_ls}.
\end{equation*}
Inserting this estimate in \eqref{boundduhamel}, we obtain
$$
e^{\mathcal{C}_lt}\|f(t)\|_{C^1}\leq\|f_0\|_{C^1}+C(l)\left(\vertiii{f}_T^3+\vertiii{f}_T^5\right)\int_0^\infty e^{-\mathcal{C}_ls}ds.
$$
With the energy estimates in Theorem \ref{teo1} and the definition of \eqref{totalnorm}, we get
$$
\frac{d}{dt}\|f(t)\|_{H^3}^2\leq C\|f(t)\|_{H^3}\mathcal{P}(\vertiii{f}_T)e^{-\mathcal{C}_lt},
$$
for a polynomial $\mathcal{P}$ with powers bigger than one. Integrating in time and collecting all the estimates together, we conclude
$$
\vertiii{f}_T\leq C\|f_0\|_{H^3}+\|f_0\|_{C^1}+\mathcal{Q}(\vertiii{f}_T)\leq C\delta+\mathcal{Q}(\vertiii{f}_T),
$$
where $\mathcal{Q}$ is a polynomial with high powers. From this latter inequality,  by a standard continuation argument, we obtain the global existence if $\delta$ is small enough. Moreover, if we take $\delta$ small enough, we can ensure that
$$
e^{\mathcal{C}_lt}\|f(t)\|_{C^1}\leq \frac{l}{4}.
$$
\end{proof}

\section{Limited regularity}\label{seclim}
\begin{proof}[Proof of Theorem \ref{teo4}]
We explain how to obtain the good bounds, then, using mollifiers for the initial data the result follows. 

\textbf{Step 1: Case $\Omega=\TT$} Given $\delta>0$, we define the energy
$$
E(t)=\|f(t)\|_{H^2}+\|f(t)\|_{\dot{W}^{1,\infty}}+\frac{1}{2\delta-\|f(t)\|_{H^2}}.
$$
We define $M(t)=\max_x \pax f(x,t)=\pax f(x_t)$ as in Proposition \ref{prop1}. Then, using Rademacher's Theorem, we have
$$
M'=\left(\frac{1}{1+l^2-(f(x_t))^2}-\frac{1}{1+(\pax f(x_t))^2}\right)\Lambda\pax f(x_t)+\Lambda f(x_t)\frac{2f(x_t)\pax f(x_t)}{(1+l^2-(f(x_t))^2)^2},
$$
thus, using Proposition \ref{prop1},
$$
M'\leq \left(\frac{1+l^2-\|f_0\|_{L^\infty}+2\|f(t)\|_{H^2}\|f_0\|_{L^\infty}}{(1+l^2-\|f_0\|_{L^\infty}^2)^2}-\frac{1}{1+(\pax f(x_t))^2}\right)M(t).
$$
If $\|f_0\|_{H^2}\leq \delta,$ due to the form of the energy, there exist a time $T^*$ such that $\max_{0\leq t\leq T^*}\|f(t)\|_{H^2}\leq 2\delta$. At this step in the proof, this time may depend on the regularization parameter, but we are going to bound it uniformly. Consequently,
$$
M'\leq \left(\frac{1}{1+l^2-C\delta^2}+\frac{C\delta^2}{(1+l^2-\|f_0\|_{L^\infty})^2}-\frac{1}{1+C\delta^2}\right)M(t),
$$
and, if $\delta<<1$ we obtain $M(t)\leq M(0),\text{ for }0\leq t\leq T^*.$ In the same way we obtain reverse inequality for $m(t)=\min_x \pax f(x,t)=\pax f(x_t)$. So, 
$$
\|\pax f(t)\|_{L^\infty}\leq\|\pax f_0\|_{L^\infty}\leq C\delta\text{ for }0\leq t\leq T^*.
$$
From this decay we obtain that the solution relies in the stable regime. Recalling the definition \eqref{sigma}, we compute
\begin{eqnarray*}
\frac{1}{2}\frac{d}{dt}\|f\|^2_{\dot{H}^2}&\leq& \left((4\|f_0\|_{L^\infty}+8)\|f(t)\|_{\dot{W}^{1,\infty}}+\|f_0\|_{L^\infty}\right)\|f\|^2_{\dot{H}^2}\\
&&+(3+\|f(t)\|_{H^2}2(1+2\|f(t)\|_{\dot{W}^{1,\infty}}))\|f\|^3_{\dot{W}^{2,3}}\\
&&+\left(8(\|f_0\|^2_{L^\infty}+\|f\|_{\dot{W}^{1,\infty}})\|f(t)\|_{\dot{W}^{1,4}}^2\right)\|f\|_{\dot{H}^2}\\
&&+\frac{1}{2}\int_\Omega\Lambda\left[\frac{1}{1+l^2-(f(x))^2}-\frac{1}{1+(\pax f(x))^2}\right](\pax^2f(x))^2dx\\
&&+\sigma(t)\|f(t)\|^2_{\dot{H}^{2.5}}.
\end{eqnarray*}
We use interpolation \eqref{GN2} to obtain
\begin{eqnarray*}
\frac{1}{2}\frac{d}{dt}\|f\|^2_{\dot{H}^2}&\leq& C\delta\|f\|^2_{\dot{H}^2}+\sigma(t)\|f(t)\|^2_{\dot{H}^{2.5}}+(1+\delta)C\|f\|^2_{\dot{H}^{2.5}}\|f\|_{\dot{W}^{1,\infty}}\\
&&+\frac{1}{2}\int_\Omega\Lambda\left[\frac{1}{1+l^2-(f(x))^2}-\frac{1}{1+(\pax f(x))^2}\right](\pax^2f(x))^2dx.
\end{eqnarray*}
We use \eqref{eqbound} and \eqref{eqbound2}. Thus, using the $L^p$-boundedness of the singular integral operators,
\begin{eqnarray*}
\frac{1}{2}\frac{d}{dt}\|f\|^2_{\dot{H}^2}&\leq& C\delta\|f\|^2_{\dot{H}^2}+\sigma(t)\|f(t)\|^2_{\dot{H}^{2.5}}+(1+\delta)C\|f\|^2_{\dot{H}^{2.5}}\|f\|_{\dot{W}^{1,\infty}}\\
&& +C(l)\|f\|_{W^{0.6,\infty}}^2\|f\|_{\dot{H}^2}^2+\|f_0\|_{L^\infty}\|\Lambda f\|_{L^\infty}\|f\|_{\dot{H}^2}^2\\
&&+C\|\pax f\|_{W^{0.6,\infty}}^2\|f\|_{\dot{H}^2}^2+\|f\|_{\dot{W}^{1,\infty}}\|f\|_{\dot{W}^{2,3}}^3\\
&\leq& C\delta\|f\|^2_{\dot{H}^2}+\sigma(t)\|f(t)\|^2_{\dot{H}^{2.5}}+C\|f\|^2_{\dot{H}^{2.5}}\|f\|_{\dot{W}^{1,\infty}}\\
&&+C\|f\|_{H^{2.1}}^2\|f\|_{\dot{H}^2}^2\\
&\leq& C\delta\|f\|^2_{\dot{H}^2}+\sigma(t)\|f(t)\|^2_{\dot{H}^{2.5}}+C\|f\|^2_{\dot{H}^{2.5}}\|f\|_{\dot{W}^{1,\infty}}\\
&&+C\|f\|^{0.32}_{L^2}\|f\|_{\dot{H}^2}^2\|f\|^{1.68}_{H^{2.5}}\\
&\leq& C\delta\|f\|^2_{\dot{H}^2}+\sigma(t)\|f(t)\|^2_{\dot{H}^{2.5}}+C\|f\|^2_{\dot{H}^{2.5}}\|f\|_{\dot{W}^{1,\infty}}\\
&&+C(\epsilon)\delta^2\|f\|_{\dot{H}^2}^{12.5}+C\epsilon\|f\|^{2}_{H^{2.5}}
\end{eqnarray*}
where we have used the continuous embedding $H^{2.1}\subset W^{1.6,\infty}$, \eqref{GN} and Young's inequality with $p=2/1.68$ and $q=6.25$. Notice that, if $\delta$ is small enough,
$$
\sigma(t)\leq \frac{1}{1+l^2-C\delta^2}-\frac{1}{1+C\delta^2}<0.
$$
Let $K\in\NN$ be a fixed number. Inserting the latter bound we get
\begin{multline*}
\frac{d}{dt}\|f\|^2_{\dot{H}^2}+\frac{1}{K(1+C\delta^2)}\|f\|^2_{\dot{H}^{2.5}}\leq C(l,\epsilon)\delta\|f\|^2_{\dot{H}^2}\\
+\|f\|^2_{\dot{H}^{2.5}}\left(\frac{1}{1+l^2-C\delta^2}-\frac{K-1}{K(1+C\delta^2)}+C\epsilon+C\delta\right).
\end{multline*}
Thus, taking $0<\delta,\epsilon<<1$ small enough and $K=K(l)$ large enough, we obtain 
$$
\|f\|_{\dot{H}^2}\leq\|f_0\|_{\dot{H}^2}e^{C(\epsilon,l)\delta t}.
$$
Putting all together, we obtain
$$
\frac{d}{dt}E\leq C(l)E(1+E^2)\leq C(l,\epsilon)\delta (1+E)^3.
$$
This bound doesn't depends on the regularization parameter, so using Gronwall inequality, we  obtain a time $T^*=T^*(l,\epsilon,\delta)$ where the solution remain in a ball with radius $2\delta$ in $H^2$. Moreover, due to the evolution of $M(t),m(t)$ and the Proposition \ref{prop1}, the solution doesn't leave the stable regime. This concludes the result in the periodic case.

\textbf{Step 2: Case $\Omega=\RR$} Given $u(x)$, we define $X$ such that $u(X)=\max_x u(x)$. Then
\begin{multline*}
\Lambda u(X)=\frac{1}{\pi}\text{P.V.}\int_\RR\frac{u(X)-u(X-y)}{y^2}dy\geq \frac{2u(X)}{\pi}\int_1^\infty y^{-2}dy\\
-\frac{1}{\pi}\int_{B^c(0,1)}\frac{u(X-y)}{y^2}dy\geq \frac{2}{\pi}u(X)-\frac{2}{3\pi}\|u\|_{L^2}
\end{multline*}
We define $M(t)$ as before. Then, using Rademacher's Theorem, we have
\begin{eqnarray*}
M'&\leq& \left(\frac{2\|f(t)\|_{H^2}\|f_0\|_{L^\infty}}{(1+l^2-\|f_0\|_{L^\infty}^2)^2}\right.\\
&&\left.+\frac{2}{\pi(1+l^2-\|f_0\|^2_{L^\infty})}-\frac{2}{\pi(1+(\pax f(x_t))^2)}\right)M(t)\\
&&-\left(\frac{1}{\pi(1+l^2-\|f_0\|^2_{L^\infty})}-\frac{1}{\pi(1+(\pax f(x_t))^2)}\right)\frac{2}{3}\|f\|_{\dot{H}^1}\\
&\leq& \left(\frac{C\delta^2}{(1+l^2-C\delta^2)^2}+\frac{2}{\pi(1+l^2-C\delta^2)}-\frac{2}{\pi(1+C\delta^2)}\right)M(t)\\
&&\left(\frac{-1}{\pi(1+l^2-C\delta^2)}+\frac{1}{\pi(1+C\delta^2)}\right)\frac{2}{3}\|f\|_{\dot{H}^1}
\end{eqnarray*}
and, if $\delta$ is taken small enough, integrating in time and using Proposition \ref{prop1}, we have
$$
\left(\frac{-1}{1+l^2-C\delta^2}+\frac{1}{1+C\delta^2}\right)\|f\|_{L^2_t\dot{H}^1}^2\leq-\int_0^t\sigma(s)\|f(s)\|_{\dot{H}^1}^2ds\leq \|f_0\|_{\dot{H}^{0.5}}^2,
$$
thus,
$$
\int_0^t\|f(s)\|_{\dot{H}^1}ds\leq \sqrt{t}\|f\|_{L^2_t\dot{H}^1}\leq \sqrt{t}C(l,\delta)\|f_0\|_{\dot{H}^{0.5}},
$$
and
$$
M(t)\leq M(0)+\sqrt{t}C(l,\delta)\|f_0\|_{\dot{H}^{0.5}}.
$$
With the same ideas we obtain the appropriate bound for $m(t)$ and we get
$$
\|f(t)\|_{\dot{W}^{1,\infty}}\leq \|f_0\|_{\dot{W}^{1,\infty}}+\sqrt{t}C(l,\delta)\|f_0\|_{\dot{H}^{0.5}}. 
$$
With this bound and Proposition \ref{prop1} we conclude the existence of a time $T_1=T_1(\delta,l)$ such that the solution doesn't leave the stable regime. We define the energy
$$
E(t)=\|f(t)\|_{H^2}+\frac{1}{2\delta-\|f(t)\|_{H^2}}.
$$
With the same ideas as in the periodic case we get a second time $T_2=T_2(\delta,l,\epsilon)$ such that the solution remains in the ball with center the origin and radius $2\delta$ in $H^2$. We take $T^*=\min(T_1,T_2)$ the time of existence and we conclude the result. 
\end{proof}
If we add a symmetry hypothesis for the initial data we can improve Theorem \ref{teo4}:
\begin{proof}[Proof of Theorem \ref{teo5}]
We have $X=H^{2.5}$ with norm $\|f(t)\|_X=\|f(t)\|_{H^2}+\int_0^t\|f(s)\|^2_{H^{2.5}}ds,$
$Y=H^1$ and $\mathcal{D}(t)=e^{\mathcal{C}_lt}$. Since Theorem \ref{teo4}, there exist a local solution on the interval $[0,T]$. We define the total norm $\vertiii{f}_T$ as in \eqref{totalnorm}. We use 
$$
\|e^{-\mathcal{C}_lt\Lambda}\|_{L^2\rightarrow L^2}\leq e^{-\mathcal{C}_lt},
$$
Due to the interpolation inequality \eqref{GN} and using the expression \eqref{nonlinear}, we get 
$$
\|NL(s)\|_{L^2}\leq C(l)\|\pax f\|_{L^2}\left(\|\pax f\|_{L^\infty}^2+\|f\|_{L^\infty}^2\right)\leq C(l)\frac{\vertiii{f}_T^3}{(\mathcal{D}(s))^{2}}.
$$
and, using Bochner Theorem,
$$
e^{\mathcal{C}_lt}\|f(t)\|_{L^2}\leq\|f_0\|_{L^2}+C(l)\vertiii{f}_T^3\int_0^t\frac{1}{e^{\mathcal{C}_ls}}ds\leq \|f_0\|_{L^2}+C(l)\vertiii{f}_T^3.
$$
With the same ideas, we get
$$
\|\pax NL(s)\|_{L^2}\leq C(l)\left(\frac{\vertiii{f}_T^{7/3}\|f(t)\|^{2/3}_{\dot{H}^{2.5}}}{(\mathcal{D}(t))^{4/3}}+\frac{\vertiii{f}_T^{3}+\vertiii{f}_T^{5}}{(\mathcal{D}(t))^{2}}\right).
$$
Consequently,
\begin{eqnarray*}
e^{\mathcal{C}_lt}\|f(t)\|_{\dot{H}^1}&\leq&\|f_0\|_{\dot{H}^1}+C(l)\left(\vertiii{f}_T^{3}+\vertiii{f}_T^{5}\right)\int_0^t\frac{1}{e^{\mathcal{C}_ls}}ds\\
&&+C(l)\vertiii{f}_T^{7/3}\int_0^t\frac{\|f(s)\|^{2/3}_{\dot{H}^{2.5}}ds}{e^{\mathcal{C}_ls(1/3)}}\\
&\leq& \|f_0\|_{\dot{H}^1}+C(l)\left(\vertiii{f}_T^{3}+\vertiii{f}_T^{5}\right)\\
&&+C(l)\vertiii{f}_T^{7/3}\left(\int_0^t\frac{1}{e^{\mathcal{C}_ls/2}}ds\right)^{2/3}\left(\int_0^t\|f(s)\|^{2}_{\dot{H}^{2.5}}ds\right)^{1/3}\\
&\leq&\|f_0\|_{\dot{H}^1}+C(l)\left(\vertiii{f}_T^{3}+\vertiii{f}_T^{5}+\vertiii{f}_T^{8/3}\right)
\end{eqnarray*}

We need to obtain \emph{bona fide} a priori estimates on the $H^2$ seminorm for small initial data. With the same estimates as in Theorem \ref{teo4} we get
$$
\frac{d}{dt}\|f\|^2_{\dot{H}^2}+\|f\|^2_{\dot{H}^{2.5}}\leq C(l,\|f_0\|_{H^2})\mathcal{P}(\vertiii{f}_T)e^{-\mathcal{C}_ls/4},
$$
where $C(l,\|f_0\|_{H^2})\rightarrow 1$ as $\|f_0\|_{H^2}\rightarrow0$ and $\mathcal{P}$ is a polynomial with high powers. Thus, adding both estimates,
$$
\vertiii{f}_T\leq \|f_0\|_{H^2}+ C(l,\|f_0\|_{H^2})\mathcal{Q}(\vertiii{f}_T).
$$
If $\|f_0\|_{H^2}<<1$ is choosen small enough, this nonlinear Gronwall-type inequality and the fact $\|f(t)\|_{W^{1,\infty}}\leq \|f_0\|_{W^{1,\infty}}$ (again, for a small enough $H^2$ initial data) give us the global existence by means of a classical continuation argument.
\end{proof}

\section{Numerical simulations for the confined Muskat problem}\label{secdec}
In this section we perform numerical simulations for equations \eqref{full} and \eqref{eq0.1} to study the decay of $\|f(t)\|_{L^ \infty}$. The main purpose of these simulations is to compare the behaviour when the depth is finite (equation \eqref{eq0.1}) with the case where the depth is infinite (equation \eqref{full}). We consider equations \eqref{eq0.1} and \eqref{full} where $\rho^2-\rho^1=4\pi$. For each initial datum we approximate the solutions of \eqref{eq0.1} and \eqref{full} with the same numerical and physical parameters. 

To perform the simulations we follow the ideas in \cite{c-g-o08}. The interface is approximated using cubic splines with $N$ spatial nodes. The spatial operator is approximated with Lobatto quadrature (using the function \emph{quadl} in Matlab). Then, three different integrals appear for a fixed node $x_i$: the integral between $x_{i-1}$ and $x_i$, the integral between $x_i$ and $x_{i+1}$ and the nonsingular ones. In the two first integrals we use Taylor series to remove the singularity. In the nonsingular integrals the integrand is made explicit using the splines. We use a classical explicit Runge-Kutta method of order 4 to integrate in time. In the simulations we take $N=300$ and $dt=10^{-3}$. In what follows we change slightly the notation and write $f^{\pi/2}(x,t)$ for the solution of \eqref{eq0.1} and $f^{\infty}(x,t)$ for the solution of \eqref{full}. Notice that the superscript denotes the depth in each situation. Then, given an initial datum $f(x,0)=f_0(x)$, which is the same for both evolution problems, we are computing a numerical approximation for $f^{\pi/2}(x,t)$ and $f^ \infty(x,t)$. The initial datum considered is
\begin{equation}\label{caso1confmus}
f_0(x)=\left(\frac{\pi}{2}-0.0001\right)e^{-x^6}.
\end{equation}
We obtain Figures \ref{deccase1}. We can see that the decay is slower in the finite depth case and the existence of a \emph{big} time interval with a very small decay.

\begin{figure}[h!]
		\begin{center}
		\includegraphics[scale=0.35]{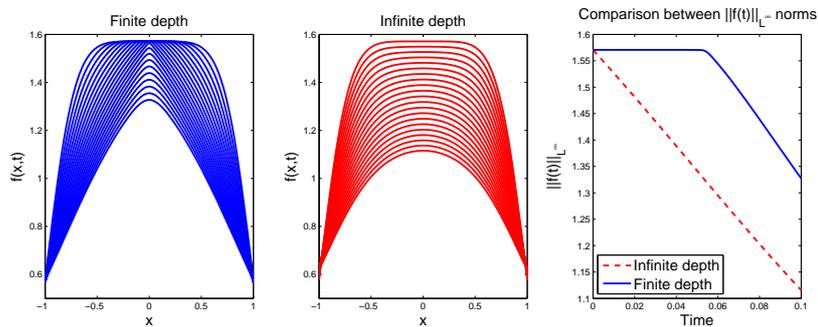} 
		\end{center}
		\caption[Numerics for the confined Muskat problem]{a) Dynamics for $f^{\pi/2}(x,t)$, b) dynamics for $f^\infty(x,t)$, c) $f^{\pi/2}(x,t)$ (blue) and $f^ \infty(x,t)$ (red) for the same times $t_i$ and initial datum given by \eqref{caso1confmus}.}
\label{deccase1}
\end{figure}

\section{Numerical simulations for \eqref{model1}}\label{secnum}
In this section we take $l=\pi/2$. To approximate the solutions to \eqref{model1} we use a Fourier collocation method with a explicit Runge-Kutta scheme for the time integration. We write $N$ for the number of spatial nodes. Then the operator $\Lambda$ can be easily discretized using the Fast Fourier Transform routine. To perform the multiplications we jump to the physical space. To advance in time we use a Runge-Kutta (4,5) scheme. 

\subsection{Decay of $\|f(t)\|_{L^\infty}$}
We consider $l=\pi/2$ and $f_0=(l-0.001)\cos(x)$ and we study $\|f(t)\|_{L^\infty}$. We show the results on Figure \ref{decaymodelfig}. We see that the evolution is qualitatively similar to the dynamics of the same quantity for equation \eqref{eq0.1} (see Figure \ref{deccase1}).
\begin{figure}[h!]
		\begin{center}
		\includegraphics[scale=0.2]{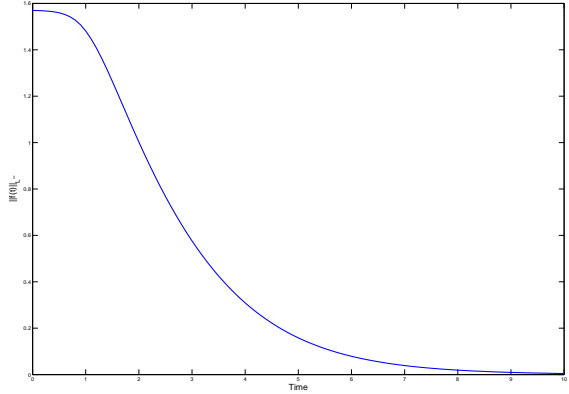} 
		\end{center}
		\caption{Evolution of $\|f(t)\|_{L^\infty}$}
\label{decaymodelfig}
\end{figure}

\subsection{Reaching the boundary} We consider $a>0$ and define the family of initial data
$$
f^a_0(x)=\cos(x)*a+\pi/2-a.
$$
Notice that $f^a_0(0)=\pi/2=l$, thus, the equation is in the unstable regime.

\begin{figure}[h!]
		\begin{center}
		\includegraphics[scale=0.25]{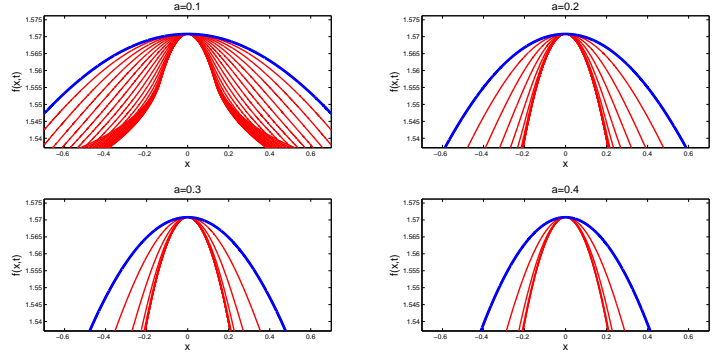} 
		\end{center}
		\caption{Evolution of $f(x,t)$ for the different cases $a=0.1,0.2,0.3,0.4$ with $N=2^{16}$. The wide line corresponds to the initial data.}
\label{evofig}
\end{figure}

\begin{figure}[h!]
		\begin{center}
		\includegraphics[scale=0.25]{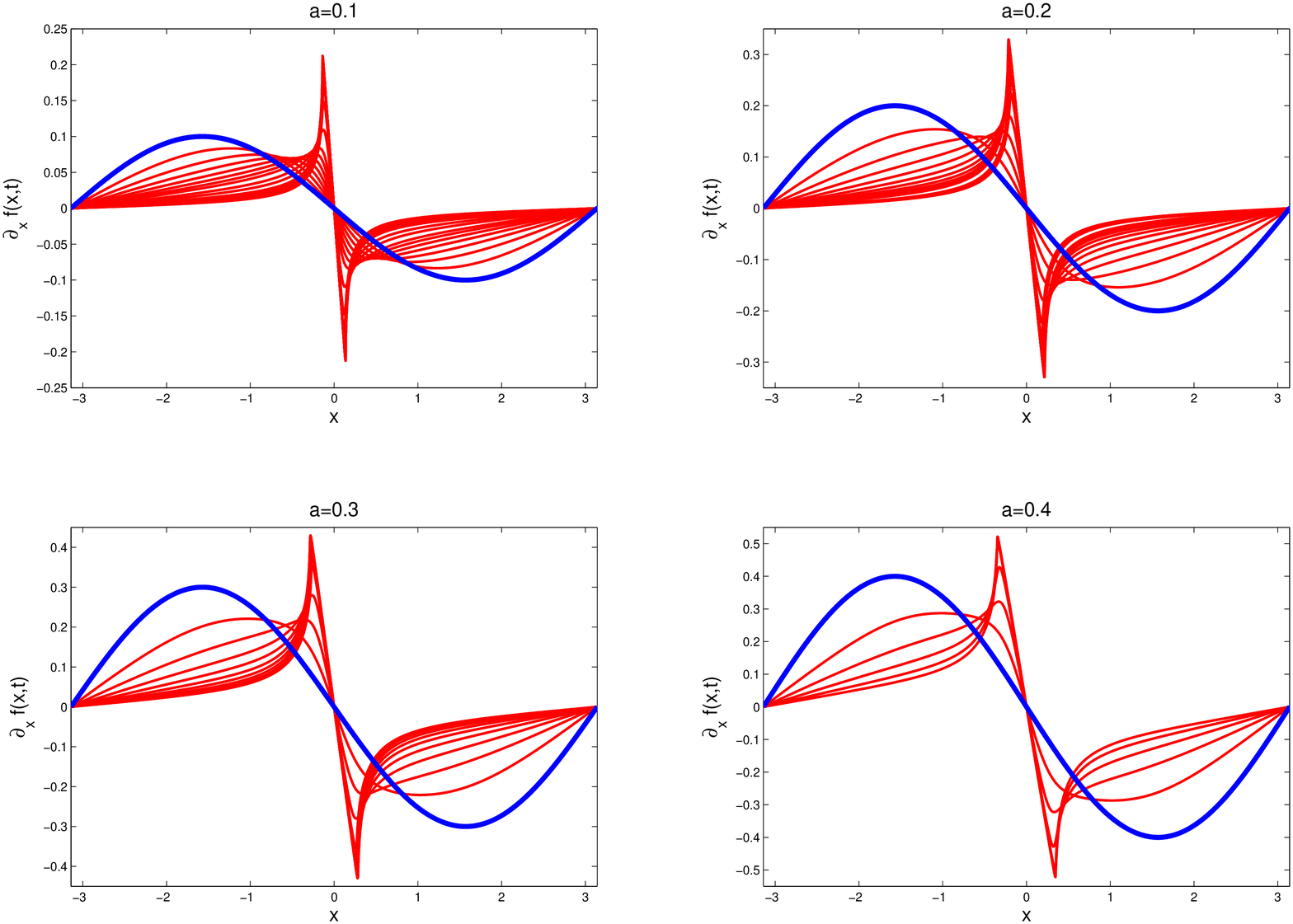} 
		\end{center}
		\caption{Evolution of $\pax f(x,t)$ for the different cases $a=0.1,0.2,0.3,0.4$ with $N=2^{17}$. The wide line corresponds to the initial data.}
\label{evodxfig}
\end{figure}

\begin{figure}[h!]
		\begin{center}
		\includegraphics[scale=0.25]{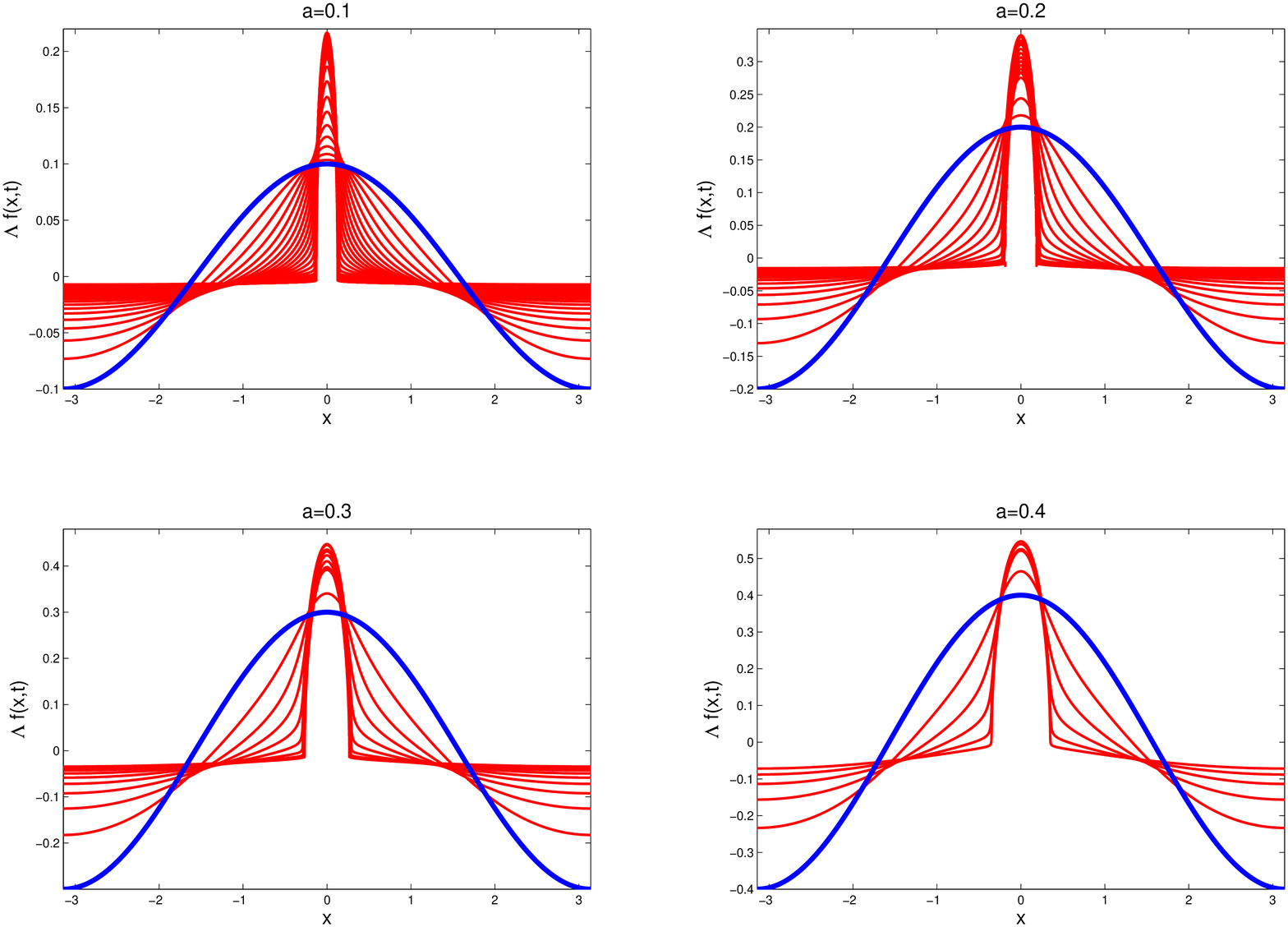} 
		\end{center}
		\caption{Evolution of $\Lambda f(x,t)$ for the different cases $a=0.1,0.2,0.3,0.4$ with $N=2^{17}$. The wide line corresponds to the initial data.}
\label{evolambdafig}
\end{figure}

We see in Figure \ref{evofig} that the second derivative at $x=0$ grows. In Figures \ref{evodxfig} and \ref{evolambdafig}, we observe that there exists two points where $\pax^2 f$ and $\pax\Lambda f$ are large. 

\section{Large time dynamics}\label{seclarge}
In this section we show that the solution never leaves the stable regime and that, for H\"older solutions, the curvature is bounded at the point where the initial data reach the boundary. These statement excludes the two main candidates for finite time singularities.

\begin{proof}[Proof of Proposition \ref{prop2}]
\textbf{Step 1:} We define
$$
\Sigma(x,t)=\frac{1}{1+l^2-(f(x,t))^2}-\frac{1}{1+(\pax f(x,t))^2}.
$$
We compute
\begin{equation}\label{eqsigma}
\pat \Sigma=\frac{-2f\Sigma\Lambda f}{(1+l^2-f^2)^2}+\frac{-2\pax f\left(\Sigma\Lambda\pax f+\pax \Sigma\Lambda f\right)}{(1+\pax f^2)^2}.
\end{equation}
Using \eqref{sigma} and Rademacher Theorem, we get
$$
\frac{d}{dt}\sigma(t)=-2\sigma(t)\left(\frac{f(x_t,t)\Lambda f(x_t,t)}{(1+l^2-(f(x_t,t))^2)^2}+\frac{\pax f(x_t,t)\Lambda\pax f (x_t,t)}{(1+(\pax f(x_t,t))^2)^2}\right),
$$
thus,
$$
\sigma(t)=\sigma(0)\exp\left(-2\int_0^t \left(\frac{f(x_s,s)\Lambda f(x_s,s)}{(1+l^2-(f(x_s,s))^2)^2}+\frac{\pax f(x_s,s)\Lambda\pax f (x_s,s)}{(1+(\pax f(x_s,s))^2)^2}\right)ds\right).
$$
From this equation we conclude the first statement.

\textbf{Step 2:} We consider an initial data such that $f(\tilde{x})=l$ for some $\tilde{x}$. Evaluating \eqref{eqsigma} at $\tilde{x}$ we get 
$$
\Sigma(\tilde{x},t)=\Sigma(\tilde{x},0)\exp\left(-2\int_0^t\frac{f(\tilde{x},s)\Lambda f(\tilde{x},s)}{(1+l^2-(f(x_s,s))^2)^2}ds\right).
$$
Assuming $f(x,t)\in C([0,T],H^3(\Omega))$ we obtain $\Sigma(\tilde{x},t)=0$ for all $0\leq t\leq T$. We compute
\begin{eqnarray*}
\pat\pax^2f&=&\Lambda\pax^2 f\left(\frac{1}{1+l^2-f^2}-\frac{1}{1+\pax f^2}\right)\\
&&+2\Lambda\pax f\left(\frac{2f\pax f}{(1+l^2-f^2)^2}+\frac{2\pax f\pax^2 f}{(1+\pax f^2)^2}\right)\\
&&+\Lambda f\left(\frac{2\pax f^2+2f\pax^2f}{(1+l^2-f^2)^2}+\frac{2\pax f\pax^3 f+2\pax^2f^2}{(1+\pax f^2)^2}\right)\\
&&+2\Lambda f\left(\frac{(2f\pax f)^2}{(1+l^2-f^2)^3}-\frac{(2\pax f\pax^2 f)^2}{(1+\pax f^2)^3}\right).
\end{eqnarray*}
If we evaluate at $x=\tilde{x}$ and we use the fact that $f(\tilde{x})=l$ is the maximum, we obtain
\begin{eqnarray*}
\pat\pax^2f(\tilde{x})&=&2\pax^2f(\tilde{x})\Lambda f(\tilde{x})\left(l+\pax^2f(\tilde{x})\right).
\end{eqnarray*}
From this ODE we conclude the result.
\end{proof}

\appendix
\section{Auxiliary results}
We provide a bound for the $\Lambda$ acting on the composition of two functions:
\begin{lem}\label{lemaaux}
Given $\Omega=\RR^d,\TT^d$ $F\in C^2(\RR)$ and $h\in W^{\gamma,\infty}(\Omega)$ with $2\gamma>\alpha>0$, we have 
$$
\Lambda^\alpha F(h(x))\leq C(F,\alpha,\gamma,d)\|h\|_{W^{\gamma,\infty}}^2+F'(h(x))\Lambda^\alpha h(x),
$$
where $C(F,\alpha,\gamma,d)=C(\|F\|_{C^2(|z|\leq \|h\|_{L^\infty})},\alpha,\gamma,d)$.
\end{lem}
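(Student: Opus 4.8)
The plan is to work from the singular-integral representation of the fractional Laplacian. For $\Omega=\RR^d$ one has
$$
\Lambda^\alpha u(x)=c_{d,\alpha}\,\mathrm{P.V.}\!\int_{\Omega}\frac{u(x)-u(x-\eta)}{|\eta|^{d+\alpha}}\,d\eta,\qquad c_{d,\alpha}>0,
$$
and on $\TT^d$ the same formula holds with $|\eta|^{-d-\alpha}$ replaced by the periodic kernel $K(\eta)$, which satisfies $K(\eta)\sim|\eta|^{-d-\alpha}$ near $\eta=0$ and is bounded on the rest of the domain; since only the behaviour near $\eta=0$ will matter, I treat both cases at once. Applying this to $u=F(h)$ and subtracting $F'(h(x))\Lambda^\alpha h(x)$, written as $c_{d,\alpha}\,\mathrm{P.V.}\!\int F'(h(x))(h(x)-h(x-\eta))|\eta|^{-d-\alpha}\,d\eta$, the two principal values combine into a single expression whose numerator is
$$
F(h(x))-F(h(x-\eta))-F'(h(x))\big(h(x)-h(x-\eta)\big).
$$

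The key step is a second-order Taylor expansion: for each $\eta$ there is $\xi_\eta$ between $h(x)$ and $h(x-\eta)$ such that the numerator above equals $-\tfrac12 F''(\xi_\eta)\big(h(x-\eta)-h(x)\big)^2$. Since $h\in W^{\gamma,\infty}$ one has $|h(x-\eta)-h(x)|\le\|h\|_{W^{\gamma,\infty}}|\eta|^{\min(\gamma,1)}$ for $|\eta|\le1$, so the integrand is $O\big(|\eta|^{2\min(\gamma,1)-d-\alpha}\big)$ near $\eta=0$ and bounded away from it; because $2\gamma>\alpha$ (and $\alpha<2$, the range where this integral representation is valid) this is absolutely integrable, the principal value becomes a genuine integral, and
$$
\Lambda^\alpha F(h)(x)-F'(h(x))\Lambda^\alpha h(x)=-\frac{c_{d,\alpha}}{2}\int_{\Omega}\frac{F''(\xi_\eta)\big(h(x-\eta)-h(x)\big)^2}{|\eta|^{d+\alpha}}\,d\eta.
$$

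To finish, I would estimate the right-hand side. Since $\xi_\eta\in[-\|h\|_{L^\infty},\|h\|_{L^\infty}]$, we have $|F''(\xi_\eta)|\le\|F\|_{C^2(|z|\le\|h\|_{L^\infty})}$; splitting the integral at $|\eta|=1$, the inner region is bounded by $\|h\|_{W^{\gamma,\infty}}^2\int_{|\eta|\le1}|\eta|^{2\min(\gamma,1)-d-\alpha}\,d\eta$ (finite by the above), and the outer region by $4\|h\|_{L^\infty}^2\int_{|\eta|\ge1}|\eta|^{-d-\alpha}\,d\eta$ (finite since $\alpha>0$; trivially finite on $\TT^d$, where this piece is over a bounded set). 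Collecting the constants and dropping the absolute value yields
$$
\Lambda^\alpha F(h(x))\le F'(h(x))\Lambda^\alpha h(x)+C(F,\alpha,\gamma,d)\|h\|_{W^{\gamma,\infty}}^2,
$$
with $C$ depending on $F$ only through $\|F\|_{C^2(|z|\le\|h\|_{L^\infty})}$, as claimed.

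There is no serious obstacle here; the only points requiring care are the bookkeeping in combining the two principal values (legitimate precisely because, after subtracting the odd-in-$\eta$ part $F'(h(x))(h(x)-h(x-\eta))$, the remaining integrand is absolutely integrable thanks to $2\gamma>\alpha$) and the observation that $\Lambda^\alpha h(x)$ should itself be a well-defined pointwise quantity — which it is in every application of the lemma, where $h$ equals $f$ or $\pax f$ with $f\in H^3$; more generally the identity holds at each point where $\Lambda^\alpha h(x)$ exists.
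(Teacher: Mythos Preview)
Your argument is correct and is essentially the same as the paper's proof. The only cosmetic difference is that the paper introduces the divided-difference $W(x,y)=\frac{F(h(x))-F(h(x-y))}{h(x)-h(x-y)}-F'(h(x))$ and bounds $|W|\le C_F|h(x)-h(x-y)|$ via Taylor, whereas you write the Taylor remainder $-\tfrac12 F''(\xi_\eta)(h(x-\eta)-h(x))^2$ explicitly; after that, both split the integral at $|\eta|=1$ and estimate the near part using the $W^{\gamma,\infty}$ seminorm and the far part using $\|h\|_{L^\infty}$.
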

\begin{proof}We prove this result for $\Omega=\RR^d$. For the torus, the proof follows the same ideas. We define 
$$
W(x,y)=\left\{\begin{array}{cc}
0 &\text{ if }h(x)=h(x-y)\\
\frac{F(h(x))-F(h(x-y))}{h(x)-h(x-y)}-F'(h(x))&\text{ otherwise}.
\end{array}\right.
$$
Notice that, using Taylor Theorem,
$$
|W|\leq C_F|h(x)-h(x-y)|.
$$
Then, given $\epsilon>0$, if $|y|>\epsilon$ we have
\begin{multline*}
\frac{F(h(x))-F(h(x-y))}{|y|^{d+\alpha}}dy=W(x,y)\frac{h(x)-h(x-y)}{|y|^{d+\alpha}}dy\\
+F'(h(x))\frac{h(x)-h(x-y)}{|y|^{d+\alpha}}dy.
\end{multline*}
Consequently,
\begin{eqnarray*}
\int_{\epsilon<|y|<1}\frac{F(h(x))-F(h(x-y))}{|y|^{d+\alpha}}dy&=&\int_{\epsilon<|y|<1}W(x,y)\frac{h(x)-h(x-y)}{|y|^{d+\alpha}}\\
&&+F'(h(x))\frac{h(x)-h(x-y)}{|y|^{d+\alpha}}dy\\
&\leq& C_F\|h\|^2_{\dot{W}^{\gamma,\infty}}\int_0^1\frac{r^{d-1}r^{2\gamma}dr}{|r|^{d+\alpha}}\\
&&+F'(h(x))\int_{\epsilon<|y|<1}\frac{h(x)-h(x-y)}{|y|^{d+\alpha}}dy.
\end{eqnarray*}
For the outer part we have
\begin{multline*}
\int_{1<|y|<\frac{1}{\epsilon}}\frac{F(h(x))-F(h(x-y))}{|y|^{d+\gamma}}dy\leq C_F\|h\|_{L^\infty}^2\\
+F'(h(x))\int_{1<|y|<\frac{1}{\epsilon}}\frac{h(x)-h(x-y)}{|y|^{d+\gamma}}dy.
\end{multline*}
Putting all together and taking the limit $\epsilon\rightarrow0$, we conclude the result.
\end{proof}

We will use a classical compactness result:
\begin{lem}[\cite{Temam}] \label{lemma:2.1}
Let $X_0,X,X_1$ be three Banach spaces such that
$$
X_0\subset X\subset X_1,
$$ 
with continuous embedding and such that $X_i$ are reflexive and the injection $X_0\subset X$ is compact. Let $T>0$ be a finite number and let $\alpha_0,\alpha_1$ be two finite numbers such that $\alpha_i>1$. Then the space
$$
Y=\{u\in L^{\alpha_0}([0,T],X_0),\; \partial_t u\in L^{\alpha_1}([0,T],X_1)\}
$$
is compactly embedded in $L^{\alpha_0}([0,T],X)$.
\end{lem}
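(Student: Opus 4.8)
The plan is to show that any bounded subset of $Y$ is totally bounded in $L^{\alpha_0}([0,T],X)$ by approximating its elements in time with mollifiers, using two inputs: Ehrling's interpolation inequality between $X_0\hookrightarrow X\hookrightarrow X_1$, and the control of time increments coming from the bound on $\partial_t u$.

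First I would record Ehrling's lemma: since $X_0\hookrightarrow X$ is compact and $X\hookrightarrow X_1$ is continuous, for every $\eta>0$ there is $C_\eta>0$ with $\|v\|_X\leq\eta\|v\|_{X_0}+C_\eta\|v\|_{X_1}$ for all $v\in X_0$. The proof is by contradiction: if it failed for some $\eta_0>0$ there would be $v_k$ with $\|v_k\|_X=1$ and $1>\eta_0\|v_k\|_{X_0}+k\|v_k\|_{X_1}$; then $(v_k)$ is bounded in $X_0$ and $\|v_k\|_{X_1}\to0$, so along a subsequence $v_k\to v$ in $X$ by compactness, whence $v=0$ through the continuous embedding into $X_1$, contradicting $\|v_k\|_X=1$.

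Next, let $F\subset Y$ be bounded, say $\|u\|_{L^{\alpha_0}([0,T],X_0)}+\|\partial_t u\|_{L^{\alpha_1}([0,T],X_1)}\leq M$ for $u\in F$. Every $u\in Y$ lies in $W^{1,1}([0,T],X_1)\subset C([0,T],X_1)$ and satisfies $u(t)-u(s)=\int_s^t\partial_t u(r)\,dr$, so H\"older gives $\|u(t)-u(s)\|_{X_1}\leq|t-s|^{1-1/\alpha_1}\|\partial_t u\|_{L^{\alpha_1}([0,T],X_1)}$, with positive exponent because $\alpha_1>1$ (note that no relation between $\alpha_0$ and $\alpha_1$ enters). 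Extending each $u$ to a slightly larger interval by reflection, with uniformly controlled norms, and convolving in time with a standard mollifier $\rho_\varepsilon$, the same bound yields $\|\rho_\varepsilon*u-u\|_{L^{\alpha_0}([0,T],X_1)}\leq C\varepsilon^{1-1/\alpha_1}M$ uniformly over $F$, while Young's inequality gives $\|\rho_\varepsilon*u\|_{L^{\alpha_0}([0,T],X_0)}\leq M$. Applying Ehrling's inequality at each time and integrating,
\[
\|\rho_\varepsilon*u-u\|_{L^{\alpha_0}([0,T],X)}\leq 2\eta M+C_\eta C\varepsilon^{1-1/\alpha_1}M,
\]
so, given $\delta>0$, choosing first $\eta$ and then $\varepsilon$ small makes this $<\delta$ for every $u\in F$ simultaneously. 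Finally, for fixed $\varepsilon$ the family $\{\rho_\varepsilon*u:u\in F\}$ is relatively compact in $C([0,T],X)$, hence in $L^{\alpha_0}([0,T],X)$: the values $(\rho_\varepsilon*u)(t)$ are bounded in $X_0$, so precompact in $X$ by the compact embedding, and $\|(\rho_\varepsilon*u)(t)-(\rho_\varepsilon*u)(s)\|_X\leq\|\rho_\varepsilon'\|_{L^{\alpha_0'}}M|t-s|$ gives equicontinuity, so Arzel\`a--Ascoli applies. Therefore $F$ is within $\delta$ of a relatively compact set for every $\delta>0$, hence totally bounded, hence relatively compact in the complete space $L^{\alpha_0}([0,T],X)$.

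The main obstacle here is essentially bookkeeping rather than conceptual: handling the mollification near the endpoints $t=0,T$ (dealt with by the reflection extension, or by one-sided mollifiers) and arranging the Ehrling step so that the $X_0$-part of $\rho_\varepsilon*u-u$ --- which does \emph{not} tend to zero --- enters only through a uniform bound absorbed by the small factor $\eta$. The reflexivity of the $X_i$ is not needed for this route; if desired, it permits the alternative of first extracting weakly convergent subsequences in $L^{\alpha_0}([0,T],X_0)$ and $L^{\alpha_1}([0,T],X_1)$ and then upgrading to strong convergence in $L^{\alpha_0}([0,T],X)$.
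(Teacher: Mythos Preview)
The paper does not supply its own proof of this lemma: it is stated as a classical compactness result and attributed to Temam~\cite{Temam}, so there is no paper proof against which to compare. Your proof plan is a correct rendition of the standard Aubin--Lions/Simon argument (Ehrling's inequality plus time mollification plus Arzel\`a--Ascoli on the smoothed family), and in particular it is essentially the argument one finds in the reference cited. Your remark that reflexivity is not actually needed along this route is also accurate; the hypothesis is a vestige of the alternative weak-compactness approach you mention at the end.
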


\bibliographystyle{abbrv}

\begin{thebibliography}{10}

\bibitem{ambrose2004well}
D.~Ambrose.
\newblock {W}ell-posedness of two-phase {H}ele-{S}haw flow without surface
  tension.
\newblock {\em {E}uropean {J}ournal of {A}pplied {M}athematics},
  15(5):597--607, 2004.

\bibitem{AGM}
Y.~Ascasibar, R.~Granero-Belinch\'on, and J.~M. Moreno.
\newblock An approximate treatment of gravitational collapse.
\newblock {\em Physica D: Nonlinear Phenomena}, 262:71 -- 82, 2013.

\bibitem{bakan2007hardy}
A.~Bakan and S.~Kaijser.
\newblock {H}ardy spaces for the strip.
\newblock {\em {J}ournal of {M}athematical {A}nalysis and {A}pplications},
  333(1):347--364, 2007.

\bibitem{bear}
J.~Bear.
\newblock {\em {D}ynamics of fluids in porous media}.
\newblock Dover Publications, 1988.

\bibitem{BCG}
L.~Berselli, D.C\'ordoba, and R.~Granero-Belinch\'on.
\newblock {L}ocal solvability and turning waves for the inhomogeneous {M}uskat
  problem.
\newblock {\em To appear in Interfaces and Free Boundaries, arXiv:1311.2194
  [math.AP]}, 2012.

\bibitem{bona2008asymptotic}
J.~Bona, D.~Lannes, and J.~Saut.
\newblock {A}symptotic models for internal waves.
\newblock {\em {J}ournal de {M}ath{\'e}matiques {P}ures et {A}ppliqu{\'e}s},
  89(6):538--566, 2008.

\bibitem{castro2012breakdown}
A.~Castro, D.~Cordoba, C.~Fefferman, and F.~Gancedo.
\newblock {B}reakdown of smoothness for the {M}uskat problem.
\newblock {\em {A}rchive for {R}ational {M}echanics and {A}nalysis},
  208(3):805--909, 2013.

\bibitem{ccfgl}
A.~Castro, D.~Cordoba, C.~Fefferman, F.~Gancedo, and M.~Lopez-Fernandez.
\newblock {R}ayleigh-{T}aylor breakdown for the {M}uskat problem with
  applications to water waves.
\newblock {\em Annals of Math}, 175:909--948, 2012.

\bibitem{CF}
M.~Cerminara and A.~Fasano.
\newblock {M}odelling the dynamics of a geothermal reservoir fed by gravity
  driven flow through overstanding saturated rocks.
\newblock {\em Journal of Volcanology and Geothermal Research}, 233:37--54,
  2012.

\bibitem{cheng2012global}
C.~Cheng, D.~Coutand, and S.~Shkoller.
\newblock {G}lobal existence and decay for solutions of the {H}ele-{S}haw flow
  with injection.
\newblock {\em arXiv preprint arXiv:1208.6213}, 2012.

\bibitem{ccgs-10}
P.~Constantin, D.~Cordoba, F.~Gancedo, and R.~Strain.
\newblock On the global existence for the {M}uskat problem.
\newblock {\em Journal of the European Mathematical Society}, 15:201--227,
  2013.

\bibitem{ccgs-13}
P.~Constantin, D.~Cordoba, F.~Gancedo, L. Rodriguez-Piazza and R.~Strain.
\newblock On the {M}uskat problem: global in time results in 2d and 3d.
\newblock {\em arXiv preprint arXiv:1310.0953}, 2013.

\bibitem{constantin1994singular}
P.~Constantin, A.~Majda, and E.~Tabak.
\newblock {S}ingular front formation in a model for quasigeostrophic flow.
\newblock {\em Physics of Fluids}, 6:9, 1994.

\bibitem{cordoba2003pointwise}
A.~C{\'o}rdoba and D.~C{\'o}rdoba.
\newblock {A} pointwise estimate for fractionary derivatives with applications
  to partial differential equations.
\newblock {\em Proceedings of the National Academy of Sciences}, 100(26):15316,
  2003.

\bibitem{cor2}
A.~C{\'o}rdoba and D.~C{\'o}rdoba.
\newblock A maximum principle applied to quasi-geostrophic equations.
\newblock {\em Communications in Mathematical Physics}, 249(3):511--528, 2004.

\bibitem{c-c-g10}
A.~Cordoba, D.~C{\'o}rdoba, and F.~Gancedo.
\newblock {I}nterface evolution: the {H}ele-{S}haw and {M}uskat problems.
\newblock {\em Annals of Math}, 173, no. 1:477--542, 2011.

\bibitem{c-g07}
D.~C{\'o}rdoba and F.~Gancedo.
\newblock {C}ontour dynamics of incompressible 3-{D} fluids in a porous medium
  with different densities.
\newblock {\em Communications in Mathematical Physics}, 273(2):445--471, 2007.

\bibitem{c-g09}
D.~C{\'o}rdoba and F.~Gancedo.
\newblock {A} maximum principle for the {M}uskat problem for fluids with
  different densities.
\newblock {\em Communications in Mathematical Physics}, 286(2):681--696, 2009.

\bibitem{c-g-o08}
D.~C{\'o}rdoba, F.~Gancedo, and R.~Orive.
\newblock {A} note on interface dynamics for convection in porous media.
\newblock {\em Physica D: Nonlinear Phenomena}, 237(10-12):1488--1497, 2008.

\bibitem{CGO}
D.~C\'ordoba, R.~Granero-Belinch\'on, and R.~Orive.
\newblock {O}n the confined {M}uskat problem: differences with the deep water
  regime.
\newblock {\em Communications in Mathematical Sciences}, 12(7): 423--455, 2014.

\bibitem{escher2011generalized}
J.~Escher, A.-V. Matioc, and B.-V. Matioc.
\newblock A generalized {R}ayleigh-{T}aylor condition for the {M}uskat problem.
\newblock {\em Nonlinearity}, 25(1):73--92, 2012.

\bibitem{e-m10}
J.~Escher and B.-V. Matioc.
\newblock On the parabolicity of the {M}uskat problem: Well-posedness,
  fingering, and stability results.
\newblock {\em Zeitschrift f{\"u}r Analysis und ihre Anwendungen},
  30(2):193--218, 2011.

\bibitem{F}
A.~Friedman.
\newblock {F}ree boundary problems arising in tumor models.
\newblock {\em Atti Accad. Naz. Lincei Cl. Sci. Fis. Mat. Natur. Rend.
  Lincei,}, 9(3-4), 2004.

\bibitem{GG}
J.~G\'omez-Serrano and R.~Granero-Belinch\'on.
\newblock On turning waves for the inhomogeneous muskat problem: a
  computer-assisted proof.
\newblock {\em To appear in Nonlinearity, arXiv:1311.0430 [math.AP]}, 2013.

\bibitem{G}
R.~Granero-Belinch\'on.
\newblock {G}lobal existence for the confined {M}uskat problem.
\newblock {\em To appear in {S}{I}{A}{M} Journal on Mathematical Analysis, arXiv:1303.1769 [math.AP]}, 2013.

\bibitem{KK}
H.~Kawarada and H.~Koshigoe.
\newblock {U}nsteady flow in porous media with a free surface.
\newblock {\em Japan Journal of Industrial and Applied Mathematics},
  8(1):41--84, 1991.

\bibitem{bertozzi-Majda}
A.~Majda and A.~Bertozzi.
\newblock {\em {V}orticity and incompressible flow}.
\newblock Cambridge Univ Pr, 2002.

\bibitem{majda1996two}
A.~Majda and E.~Tabak.
\newblock {A} two-dimensional model for quasigeostrophic flow: comparison with
  the two-dimensional {E}uler flow.
\newblock {\em Physica D: Nonlinear Phenomena}, 98(2-4):515--522, 1996.

\bibitem{marchioro1994mathematical}
C.~Marchioro and M.~Pulvirenti.
\newblock {\em Mathematical theory of incompressible non-viscous fluids},
  volume~96.
\newblock Springer, 1994.

\bibitem{Muskat}
M.~Muskat.
\newblock {T}he flow of homogeneous fluids through porous media.
\newblock {\em Soil Science}, 46(2):169, 1938.

\bibitem{nirenberg1972abstract}
L.~Nirenberg.
\newblock {A}n abstract form of the nonlinear {C}auchy-{K}owalewski theorem.
\newblock {\em J. Differential Geometry}, 6:561--576, 1972.

\bibitem{nishida1977note}
T.~Nishida.
\newblock A note on a {T}heorem of {N}irenberg.
\newblock {\em J. Differential Geometry}, 12:629--633, 1977.

\bibitem{shatahnormalforms}
J.~Shatah.
\newblock Normal forms and quadratic nonlinear {K}lein-{G}ordon equations.
\newblock {\em Comm. Pure Appl. Math.}, 38(5):685--696, 1985.

\bibitem{SCH}
M.~Siegel, R.~Caflisch, and S.~Howison.
\newblock {G}lobal existence, singular solutions, and ill-posedness for the
  {M}uskat problem.
\newblock {\em Comm. Pure Appl. Math.},
  57(10):1374--1411, 2004.
    
\bibitem{Temam}
     \newblock R. Temam,
     \newblock \emph{Navier-Stokes Equations: theory and numerical analysis},
     \newblock AMS Chelsea publishing, Providence, (2001).

                
\end{thebibliography}

\end{document}